\numberwithin{equation}{section}
\newtheorem{theorem}[equation]{Theorem}
\newtheorem{proposition}[equation]{Proposition}
\newtheorem{lemma}[equation]{Lemma}
\newtheorem{question}[equation]{Question}
\newtheorem{maintheorem}{Theorem}
\theoremstyle{definition}
\newtheorem{rmk}[equation]{Remark}
\newenvironment{remark}[1][]{\begin{rmk}[#1] \pushQED{\qed}}{\popQED \end{rmk}}
\newtheorem{eg}[equation]{Example}
\newenvironment{example}[1][]{\begin{eg}[#1] \pushQED{\qed}}{\popQED \end{eg}}
\newtheorem{defnaux}[equation]{Definition}
\newenvironment{definition}[1][]{\begin{defnaux}[#1]\pushQED{\qed}}{\popQED \end{defnaux}}
\newcommand{\bA}{\mathbf{A}}
\newcommand{\cC}{\mathcal{C}}
\newcommand{\cD}{\mathcal{D}}
\newcommand{\bF}{\mathbf{F}}
\newcommand{\cL}{\mathcal{L}}
\newcommand{\bN}{\mathbf{N}}
\newcommand{\bP}{\mathbf{P}}
\newcommand{\bQ}{\mathbf{Q}}
\newcommand{\bS}{\mathbf{S}}
\newcommand{\fS}{\mathfrak{S}}
\newcommand{\fT}{\mathfrak{T}}
\newcommand{\sU}{\mathscr{U}}
\newcommand{\bZ}{\mathbf{Z}}
\newcommand{\fa}{\mathfrak{a}}
\newcommand{\rf}{\mathrm{f}}
\let\ol\overline
\let\ul\underline
\let\defn\emph
\renewcommand{\phi}{\varphi}
\renewcommand{\emptyset}{\varnothing}
\newcommand{\lw}{{\textstyle \bigwedge}}
\DeclareMathOperator{\Sym}{Sym}
\DeclareMathOperator{\Aut}{Aut}
\DeclareMathOperator{\res}{res}
\DeclareMathOperator{\Hom}{Hom}
\DeclareMathOperator{\Rep}{Rep}
\newcommand{\id}{\mathrm{id}}
\newcommand{\pol}{\mathrm{pol}}
\newcommand{\GL}{\mathbf{GL}}
\newcommand{\Th}{\mathfrak{Th}}
\newcommand{\bzero}{\mathbf{0}}
\newcommand{\Str}{\mathcal{C}}
\DeclareMathOperator{\Sh}{Sh}
\DeclareMathOperator{\CS}{\Delta}
\DeclareMathOperator{\sh}{sh}
\newcommand{\uomega}{\ul{\smash{\omega}}}
\newcommand{\ueta}{\ul{\smash{\eta}}}
\newcommand{\umu}{\ul{\smash{\mu}}}
\newcommand{\unu}{\ul{\smash{\nu}}}
\newcommand{\ulambda}{\ul{\smash{\lambda}}}
\newcommand{\DOI}[1]{\href{http://doi.org/#1}{\color{purple}{\tiny\tt DOI:#1}}}
\newcommand{\arxiv}[1]{\href{http://arxiv.org/abs/#1}{{\tiny\tt arXiv:#1}}}
\author{Nate Harman}
\address{Department of Mathematics, University of Michigan, Ann Arbor, MI}
\email{\href{mailto:nharman@umich.edu}{nharman@umich.edu}}
\urladdr{\url{https://math.mit.edu/~nharman/}}
\author{Andrew Snowden}
\address{Department of Mathematics, University of Michigan, Ann Arbor, MI}
\email{\href{mailto:asnowden@umich.edu}{asnowden@umich.edu}}
\urladdr{\url{http://www-personal.umich.edu/~asnowden/}}
\thanks{AS was supported by NSF DMS-1453893.}
\title{Ultrahomogeneous tensor spaces}
\date{August 7, 2023}
\begin{document}

\begin{abstract}
A cubic space is a vector space equipped with a symmetric trilinear form. Using categorical Fra\"iss\'e theory, we show that there is a universal ultrahomogeneous cubic space $V$ of countable infinite dimension, which is unique up to isomorphism. The automorphism group $G$ of $V$ is quite large and, in some respects, similar to the infinite orthogonal group. We show that $G$ is a linear-oligomorphic group (a class of groups we introduce), and we determine the algebraic representation theory of $G$. We also establish some model-theoretic results about $V$: it is $\omega$-categorical (in a modified sense), and has quantifier elimination (for vectors). Our results are not specific to cubic spaces, and hold for a very general class of tensor spaces; we view these spaces as linear analogs of the relational structures studied in model theory.
\end{abstract}

\maketitle
\tableofcontents

\section{Introduction}

The purpose of this paper is to study tensor spaces, i.e., vector spaces equipped with various kinds of multi-linear forms. We construct (ultra)homogeneous\footnote{In this context, ``homogeneous'' and ``ultrahomogeneous'' are often used interchangeably. We use ``ultrahomogeneous'' in the title since ``homogeneous'' has many other meanings, but we use ``homogeneous'' in the text since it is shorter.} tensor spaces and study their symmetry groups and model theory. Our work is closely related to several current topics in algebra (such as geometry of tensors and Deligne interpolation), and ties in with classical topics in model theory (such as homogeneous structures and $\omega$-categoricity). In the remainder of the introduction, we state our results in more detail and explain the motivation behind this work.

\subsection{Tensor spaces}

Fix a field $k$ of characteristic~0. Let $\ulambda=[\lambda_1, \ldots, \lambda_r]$ be a tuple of non-empty partitions. A \defn{$\ulambda$-structure} on a $k$-vector space $V$ is a tuple $\uomega=(\omega_1, \ldots, \omega_r)$ where $\omega_i \colon \bS_{\lambda_i}(V) \to k$ is a linear map, and $\bS_{\lambda}$ denotes the Schur functor associated to $\lambda$. A \defn{$\ulambda$-space} is a vector space equipped with a $\ulambda$-structure. These are the main objects of study in this paper.

Here are some examples of the above definition:
\begin{itemize}
\item Suppose $\ulambda=[(2)]$, i.e., $\ulambda$ consists of a single partition $\lambda_1=(2)$. The Schur functor $\bS_{(2)}$ is the symmetric power $\Sym^2$. Thus a $\ulambda$-structure on $V$ is just a quadratic form (or symmetric bilinear form) on $V$, and a $\ulambda$-space is a quadratic space.
\item Similarly, if $\ulambda=[(1,1)]$ then a $\ulambda$-space is a vector space equipped with an anti-symmetric bilinear form. (The Schur functor $\bS_{(1,1)}$ is the exterior square $\lw^2$.)
\item Suppose $\ulambda=[(3)]$. Then a $\ulambda$-space is a cubic space, i.e., a vector space with a cubic form (or symmetric trilinear form).
\item Finally, suppose that $\ulambda=[(2), (2)]$ consists of two copies of the partition $(2)$. Then a $\ulambda$-structure on $V$ is a pair of quadratic forms on $V$.
\end{itemize}
An \defn{embedding} of $\ulambda$-spaces $W \to V$ is an injective linear map such that the $\ulambda$-structure on $V$ pulls back to the one on $W$. An \defn{isomorphism} is an embedding that is a linear isomorphism.

The notion of $\ulambda$-space can be viewed as a linear analog of the notion of relational structure appearing in model theory; the tuple $\ulambda$ plays the role of the signature of a relational structure. This paper can be seen as an attempt to find linear analogs of some ideas related to relational structures, such as Fra\"iss\'e limits, $\omega$-categoricity, oligomorphic groups, etc.

\subsection{Homogeneous spaces}

The following is our first main theorem:

\begin{maintheorem}[Theorem~\ref{thm:homo}] \label{mainthm}
There exists a $\ulambda$-space $V_{\ulambda}$ of countable dimension with the following two properties:
\begin{enumerate}
\item Universality: if $W$ is a finite dimensional $\ulambda$-space then there is an embedding $W \to V_{\ulambda}$.
\item Homogeneity: if $W$ and $W'$ are finite dimensional subspaces of $V_{\ulambda}$, with the induced structures, then any isomorphism $W \to W'$ extends to an automorphism of $V_{\ulambda}$.
\end{enumerate}
Moreover, $V_{\ulambda}$ is unique up to isomorphism: any $\ulambda$-space of countable dimension satisfying (a) and (b) is isomorphic to $V_{\ulambda}$.
\end{maintheorem}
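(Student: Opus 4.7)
\medskip

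\noindent\textbf{Proof plan.} I would prove this by applying Fra\"iss\'e's theorem in its categorical form to the category $\cC_{\ulambda}$ whose objects are finite-dimensional $\ulambda$-spaces and whose morphisms are $\ulambda$-embeddings. The plan is to verify that $\cC_{\ulambda}$ enjoys the hereditary property (HP), joint embedding property (JEP), and amalgamation property (AP), and then invoke the general Fra\"iss\'e existence-and-uniqueness theorem (presumably developed earlier in the paper under the heading of ``categorical Fra\"iss\'e theory'') to obtain $V_{\ulambda}$ as a countable-dimensional colimit, unique up to isomorphism.

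HP is immediate, since any subspace of a $\ulambda$-space inherits a $\ulambda$-structure by pullback of each $\omega_i$ along $\bS_{\lambda_i}$ of the inclusion, and JEP is the special case of AP where one amalgamates over the zero $\ulambda$-space. The substance is therefore AP. Given a cospan $W_1 \hookleftarrow U \hookrightarrow W_2$ of $\ulambda$-embeddings, I take $V$ to be the vector-space pushout $W_1 \oplus_U W_2$; the task is to equip $V$ with a $\ulambda$-structure making both maps $W_j \hookrightarrow V$ into $\ulambda$-embeddings. Fix complements $U'_j$ so that $W_j = U \oplus U'_j$ and $V = U \oplus U'_1 \oplus U'_2$. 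In characteristic~0 the Schur functor of a direct sum decomposes by the Littlewood--Richardson rule, and this yields an internal direct sum decomposition
\[
\bS_{\lambda_i}(V) = \bigl(\bS_{\lambda_i}(W_1) + \bS_{\lambda_i}(W_2)\bigr) \oplus M_i,
\]
where the images of $\bS_{\lambda_i}(W_1)$ and $\bS_{\lambda_i}(W_2)$ inside $\bS_{\lambda_i}(V)$ meet precisely in $\bS_{\lambda_i}(U)$, while $M_i$ is the span of those LR summands whose content involves both $U'_1$ and $U'_2$ nontrivially. The given forms on $W_1$ and $W_2$ agree on $\bS_{\lambda_i}(U)$ by hypothesis, so they glue to a well-defined linear map on the first summand, and I would simply define $\omega_i$ on $V$ to be zero on $M_i$. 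This produces the desired amalgam (in fact a strong amalgam, since $W_1 \cap W_2 = U$ holds by construction).

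With HP, JEP, and AP established, existence is obtained by the usual Fra\"iss\'e construction: $V_{\ulambda}$ is built as a countable directed colimit of finite-dimensional $\ulambda$-spaces in which, at each stage, a new finite-dimensional $\ulambda$-space is embedded (yielding universality) and every isomorphism between finite-dimensional subspaces of the current stage is realized as the restriction of an automorphism of a later stage via the amalgamation step (yielding homogeneity). Uniqueness is a standard back-and-forth between two candidate colimits. The main obstacle I anticipate is that $\cC_{\ulambda}$ has uncountably many isomorphism types --- even a single cubic form in a few variables has continuous moduli --- so the classical enumeration used in Fra\"iss\'e's original argument is not directly available. This is precisely what the categorical Fra\"iss\'e theory should be designed to bypass, replacing ``countably many isomorphism types'' by a suitable countable cofinality/separability condition on $\cC_{\ulambda}$ expressed at the level of morphisms rather than isomorphism classes; verifying that condition is, I expect, the real technical content, after which amalgamation and the back-and-forth deliver the theorem routinely.
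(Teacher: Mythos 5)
Your overall strategy (categorical Fra\"iss\'e theory applied to $\Str^{\rf}_{\ulambda}$) matches the paper's, and your amalgamation argument is correct: with $V=U\oplus U'_1\oplus U'_2$, the Littlewood--Richardson decomposition of $\bS_{\lambda_i}(V)$ does exhibit $\bS_{\lambda_i}(W_1)\cap\bS_{\lambda_i}(W_2)=\bS_{\lambda_i}(U)$, so the forms glue and extending by zero on the remaining summands yields a strong amalgam. But there is a genuine gap, and you have located it yourself without filling it: (JEP) and (AP) alone do not suffice. The paper's Fra\"iss\'e criterion (Theorem~\ref{A:fraisse}) also requires countable cofinality (CC) and its relative version (RCC) --- equivalently, that $\Str^{\rf}_{\ulambda}$ \emph{and every coslice category} $\CS_U(\Str^{\rf}_{\ulambda})$ admit a universal ind-object. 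An $\omega$-chain built by iterated amalgamation can only absorb countably many finite spaces, while (as you note) there are continuum-many isomorphism classes already for cubic forms; so without (CC)/(RCC) the construction simply does not terminate in a universal object. Deferring this as ``the real technical content'' is accurate, but it means the proof is not complete: this is precisely where the paper does its work.

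Concretely, what is needed and what the paper supplies is the following. First, for each $d$ one must exhibit a single \emph{finite-dimensional} $d$-universal $\ulambda$-space, i.e.\ one into which every $d$-dimensional $\ulambda$-space embeds; the paper does this by an explicit construction (Lemma~\ref{lem:univ-1}): on a space of dimension $mn$ with $m=d^n+d$, the $n$-form $\omega=\sum_{i=1}^{m}x_{i,1}\otimes\cdots\otimes x_{i,n}$ is $d$-universal, because an arbitrary $d$-dimensional form $\eta=\sum_i c_i\, y_{a_1(i)}\otimes\cdots\otimes y_{a_n(i)}$ can be pulled back from it by an explicit surjection on duals; general $\ulambda$ is then handled by Lemmas~\ref{lem:univ-2} and~\ref{lem:univ-3}. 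Chaining these via (JEP) gives (CC). Second, and less obviously, one needs the \emph{relative} statement over each finite $U$. The paper reduces this to the absolute case by the shift equivalence $\Sh_U(\Str_{\ulambda})\simeq\Str_{\umu}$ with $\umu=\sh_n^{\circ}(\ulambda)$ (Propositions~\ref{prop:shift-str2} and~\ref{prop:shift-slice}): a $\ulambda$-space containing a pinned copy of $U$ is the same as a $\umu$-space for a shifted tuple, so the universal-space construction applies again to produce universal ind-objects in every $\CS_U(\Str^{\rf}_{\ulambda})$. Your write-up contains neither ingredient. If you add the explicit $d$-universal space and the shift reduction (or some substitute for them), your amalgamation argument becomes a clean alternative to the paper's route through criterion (c) of Theorem~\ref{A:fraisse}; as it stands, the existence half of the theorem is unproved.
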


This theorem is rather surprising, as it distinguishes an isomorphism class of $\ulambda$-space of countable dimension; there is nothing like this in finite dimensions, except for certain small $\ulambda$, like $\ulambda=[(2)]$.

There are two ingredients in the proof of Theorem~\ref{mainthm}. The first is a categorical variant of Fra\"iss\'e's theorem. This goes back to the work of Droste--G\"obel \cite{DrosteGobel1,DrosteGobel2}, and has appeared in more recent work too \cite{Caramello,Irwin,Kubis}. We include an appendix giving a self-contained treatement. The second is a direct construction of universal (but not necessarily homogeneous) $\ulambda$-spaces. 

\begin{remark}
The notion of universal $\ulambda$-space appears implicitly in much recent work \cite{BDDE, polygeom, des, KaZ1, KaZ2}, and is closely related to the notion of strength in commutative algebra \cite{AH, ess}. In particular, \cite{KaZ2} implies that for $\ulambda=[(d)]$ and $k$ algebraically closed, a $\ulambda$-space is universal if and only if its defining form has infinite strength. The paper \cite{BDDE} proves a generalization of this result; see Remark~\ref{rmk:uni-str}. We also note that \cite{isocubic} classifies universal cubic spaces of countable dimension up to isogeny.
\end{remark}

\begin{remark}
We work in characteristic~0 throughout this paper. Our results on $\ulambda$-spaces remain valid (with the same proofs) in positive characteristic $p$, provided $p$ is larger than each $\vert \lambda_i \vert$. In fact, over a finite field of such characteristic, one can prove Theorem~\ref{mainthm} using the classical form of Fra\"iss\'e's theorem. For small $p$, our definition of $\ulambda$-space is not really correct: for example, in characteristic~2 one should allow for both quadratic forms and symmetric bilinear forms. We suspect our results could be extended to this situation, but we have not pursued it.
\end{remark}

\subsection{Symmetry groups}

Let $G_{\ulambda}$ be the automorphism group of the space $V_{\ulambda}$ in Theorem~\ref{mainthm}. This is an infinite dimensional algebraic group; implicit in this assertion is the fact that the construction of $V_{\ulambda}$ is compatible with extension of scalars. For certain small $\ulambda$, the group $G_{\ulambda}$ is recognizable; for example, $G_{[2]}$ is the split infinite orthogonal group. However, when $\ulambda$ contains a partition of size at least three, $G_{\ulambda}$ seems to be unrelated to familiar groups.

It follows from the homogeneity of $V_{\ulambda}$ that the groups $G_{\ulambda}$ are reasonably large. For example, if $\ulambda=[(d)]$ and $k$ is algebraically closed then $G_{\ulambda}$ has two orbits on the space $\bP(V_{\ulambda})$ of lines in $V_{\ulambda}$, namely, the invariant hypersurface and its complement. To more precisely quantify the size of $G_{\ulambda}$, we introduce the notion of a \emph{linear-oligomorphic group}; as the name suggests, this is the linear analog of the notion of oligomorphic group that is so important to homogeneous structures (see \cite{Cameron, Macpherson}). See \S \ref{ss:lin-olig} for the definitions. We then prove the following theorem:

\begin{maintheorem}[Theorem~\ref{thm:olig}] \label{mainthm2}
The group $G_{\ulambda}$ is linear-oligomorphic.
\end{maintheorem}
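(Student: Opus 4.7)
The plan is to exploit the homogeneity of $V_{\ulambda}$ to reduce orbit questions about $G_{\ulambda}$ on tuples of vectors to a classical invariant-theoretic problem for the reductive group $\GL_n$. Whatever the precise formulation of \emph{linear-oligomorphic} in \S\ref{ss:lin-olig} turns out to be, by analogy with the classical oligomorphic condition it must encode a finiteness statement for the $G_{\ulambda}$-action on $V_{\ulambda}^n \cong \Hom(k^n, V_{\ulambda})$ for each $n\geq 1$---most naturally, that the ring of $G_{\ulambda}$-invariant polynomial functions on $V_{\ulambda}^n$ is finitely generated, or equivalently that the $G_{\ulambda}$-orbit space carries a natural finite-type algebraic structure.

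The key construction is a \emph{pullback of structures} map. I would identify a tuple $(v_1,\ldots,v_n)\in V_{\ulambda}^n$ with the linear map $\phi\colon k^n\to V_{\ulambda}$ sending $e_i\mapsto v_i$; applying each Schur functor $\bS_{\lambda_i}$ and composing with $\omega_i\colon \bS_{\lambda_i}(V_{\ulambda})\to k$ then yields a pulled-back $\ulambda$-structure $\phi^*(\uomega)$ on $k^n$. These assemble into a $G_{\ulambda}$-invariant, $\GL_n$-equivariant map
\[
\Phi\colon V_{\ulambda}^n \longrightarrow Y_n := \bigoplus_{i=1}^r \bS_{\lambda_i}(k^n)^*.
\]
Theorem~\ref{mainthm}(a) (universality) shows that $\Phi$ is surjective on $k$-points. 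Theorem~\ref{mainthm}(b) (homogeneity) shows that two tuples with $\GL_n$-equivalent images under $\Phi$ lie in the same $G_{\ulambda}$-orbit: any isomorphism between the $\ulambda$-substructures they generate extends to an element of $G_{\ulambda}$ (after first replacing $k^n$ by the common quotient through which $\phi$ factors, to accommodate non-injective tuples).

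Pulling back along $\Phi$ then identifies the $G_{\ulambda}$-invariant polynomial functions on $V_{\ulambda}^n$ with $k[Y_n]^{\GL_n}$. Since $Y_n$ is a finite-dimensional rational representation of the reductive group $\GL_n$ over a field of characteristic zero, Hilbert's theorem gives the finite generation of this invariant ring. This is the desired finiteness, and hence the linear-oligomorphy of $G_{\ulambda}$.

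The hard part is the identification $k[V_{\ulambda}^n]^{G_{\ulambda}} = k[Y_n]^{\GL_n}$, in particular the direction that every $G_{\ulambda}$-invariant polynomial on $V_{\ulambda}^n$ is pulled back from $Y_n$. This amounts to saying that $\GL_n$-orbits in $Y_n$ constitute a complete set of conjugation invariants for the $G_{\ulambda}$-action on $V_{\ulambda}^n$, which in turn reduces to verifying that any two linear maps $\phi, \phi'\colon k^n\to V_{\ulambda}$ with the same kernel and the same pulled-back $\ulambda$-structure are $G_{\ulambda}$-conjugate. That last statement is precisely the content of homogeneity applied to the common image of $\phi$ and $\phi'$, so once the precise definition in \S\ref{ss:lin-olig} is unpacked, the remaining work should be a fiddly but routine bookkeeping exercise rather than any genuinely new idea.
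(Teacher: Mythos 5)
There is a genuine gap here, and it starts with the definition. You guessed that \emph{linear-oligomorphic} means finite generation of the invariant ring $k[V_{\ulambda}^n]^{G_{\ulambda}}$, but Definition~\ref{defn:olig} is different and strictly stronger in flavor: for every $d$ there must exist a single finite dimensional subspace $E \subset V_{\ulambda}$ such that \emph{every} $d$-dimensional subspace $W$ satisfies $gW \subset E$ for some $g \in G_{\ulambda}$. Your argument never produces such an $E$. What you do prove is essentially the paper's Proposition~\ref{prop:class}: your map $\Phi$ is the classifying map $\pi \colon V^{[n]} \to \bS_{\ulambda}(k^n)^*$ of \S\ref{ss:class}, and universality/homogeneity give exactly its surjectivity and injectivity modulo $G_{\ulambda}$. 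That is a true and useful statement (the paper uses it for types and quantifier elimination in \S\ref{s:model}), but it only says that the orbits of $d$-dimensional subspaces are parametrized by the finite dimensional space $\bS_{\ulambda}(k^d)^*$ modulo $\GL_d$. Since this is a positive-dimensional family of orbits, you cannot obtain $E$ by taking the span of finitely many orbit representatives; knowing the orbit space is "finite type" does not by itself tell you that one finite dimensional subspace meets every orbit.

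The missing ingredient is Theorem~\ref{thm:univ}(b): the existence of a \emph{finite dimensional} $d$-universal $\ulambda$-space, i.e., a single finite $\ulambda$-space $E$ into which every $d$-dimensional $\ulambda$-space embeds. This rests on the explicit construction in Lemma~\ref{lem:univ-1} (the form $\omega = \sum_i x_{i,1} \otimes \cdots \otimes x_{i,n}$ restricted to $V_m$ with $m = d^n + d$), which is the real content of the theorem and does not appear anywhere in your proposal. Once one has such an $E$, the proof is short: embed $E$ into $V_{\ulambda}$ by universality; given a $d$-dimensional $W \subset V_{\ulambda}$, $d$-universality of $E$ gives an embedding $j \colon W \to E$, and homogeneity of $V_{\ulambda}$ supplies $g \in G_{\ulambda}$ with $g|_W = j$, whence $gW \subset E$. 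Your use of homogeneity to conjugate tuples with the same pulled-back structure is correct in spirit and is exactly the second half of this argument, but without the finite $d$-universal space the finiteness you certify is of the wrong kind.
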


We view the $G_{\ulambda}$ as new infinite dimensional algebraic groups that are analogous to the infinite orthogonal group. There are a number of potentially fascinating directions to explore; we suggest a few here:
 
\begin{question}
How much of the theory of algebraic groups applies to $G_{\ulambda}$? Is there a Cartan subgroup, Weyl group, and Dykin diagram? What does the Lie algebra look like? Are there interesting twisted forms? As an abstract group, is it close to being simple? Is there a notion of automorphic representation?
\end{question}

We prove one additional result about the $G_{\ulambda}$: we determine its algebraic representation theory (Theorem~\ref{thm:rep}). To do this, we show that the representation theory of $G_{\ulambda}$ is equivalent to the representation theory of the ``generalized stabilizers'' studied in \cite{tcares}. The representation theory of $G_{\ulambda}$ is very similar to that of infinite classical groups, as studied in \cite{koszulcategory, penkovserganova, penkovstyrkas, infrank}.

\subsection{Model theory}

We have drawn an analogy between $\ulambda$-spaces and relational structures. Homogeneous relational structures have a number of interesting model-theoretic properties. With this in mind, we examine some of the model-theoretic properties of homogeneous $\ulambda$-spaces. The following theorem summarizes our main results:

\begin{maintheorem}[Theorems~\ref{thm:categorical} and~\ref{thm:quant}] \label{mainthm3}
The $\ulambda$-space $V_{\ulambda}$ is linearly $\omega$-categorical, and satisfies vector-quantifier elimination.
\end{maintheorem}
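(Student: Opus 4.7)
The plan is to derive both parts of the theorem as essentially formal consequences of Theorems~A and~B, via linear analogs of the standard model-theoretic dictionary between Fra\"iss\'e limits, Ryll-Nardzewski, and quantifier elimination.

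For linear $\omega$-categoricity, I would first formulate the notion carefully in the two-sorted language with a vector sort (for $V_{\ulambda}$) and a scalar sort (for $k$), with function symbols for vector addition, scalar multiplication, and the maps $\omega_i$ evaluated on products of vectors. The target statement is a linear analog of Ryll-Nardzewski: a countable-dimensional $\ulambda$-space is linearly $\omega$-categorical iff its automorphism group is linear-oligomorphic. One direction --- that linear-oligomorphy implies categoricity --- is what we need, and it reduces to the familiar back-and-forth argument: given two countable-dimensional models $V$ and $V'$ satisfying the same theory, one builds an isomorphism $V \to V'$ by enumerating bases on both sides and extending partial isomorphisms between finite-dimensional subspaces one vector at a time, using the finite-orbit hypothesis to realize types. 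Since Theorem~B supplies linear-oligomorphy of $G_{\ulambda}$, and the uniqueness clause of Theorem~A supplies the key ``amalgamation'' input, linear $\omega$-categoricity of $V_{\ulambda}$ follows.

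For vector-quantifier elimination, the strategy is the standard Fra\"iss\'e-limit route: show that every formula $\phi(v_1,\ldots,v_n)$ in vector variables is equivalent, over the theory of $V_{\ulambda}$, to a quantifier-free formula. By induction on quantifier depth, it suffices to handle the case $\phi(\bar v) = \exists w\, \psi(\bar v, w)$ with $\psi$ quantifier-free, and the standard criterion (e.g.\ via the method of partial isomorphisms) reduces this to the following statement: if $\bar v$ and $\bar v\,'$ in $V_{\ulambda}$ satisfy the same quantifier-free formulas, and if there exists $w$ with $\psi(\bar v,w)$, then some $w'$ satisfies $\psi(\bar v\,',w')$. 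But the hypothesis means precisely that the substructures (in the $\ulambda$-space sense) generated by $\bar v$ and $\bar v\,'$ are isomorphic via $v_i \mapsto v_i'$, so homogeneity (Theorem~A) produces an automorphism of $V_{\ulambda}$ carrying $\bar v$ to $\bar v\,'$, and we take $w'$ to be the image of $w$.

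The main obstacle --- really the only substantive issue --- is not the logical content but the bookkeeping: formulating ``linear $\omega$-categorical'' and ``vector-quantifier elimination'' precisely in the two-sorted language so that the classical arguments transfer verbatim. One subtlety is that the scalar sort is a general characteristic-zero field with no assumption of QE on its own, so the theorem must restrict to formulas with only vector free variables (hence the adjective ``vector''); correspondingly the back-and-forth must be set up to extend along vectors rather than scalars, which matches the statement of homogeneity in Theorem~A. Once these definitions are in place, both parts are essentially immediate from the results already proved.
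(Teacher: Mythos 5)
There is a genuine gap in both halves, and it is the same gap: you never supply the mechanism that controls the (infinitely many) types of tuples in $V_{\ulambda}$. For categoricity, you propose to invoke a linear Ryll--Nardzewski theorem (``linear-oligomorphic $\Rightarrow$ categorical'' via back-and-forth, ``using the finite-orbit hypothesis to realize types''). But there is no finite-orbit hypothesis here: $G_{\ulambda}$ has infinitely many orbits on linearly independent $n$-tuples (they are parametrized by the points of $X=\bS_{\ulambda}(k^n)^*$), and the paper explicitly leaves the linear Ryll--Nardzewski statement as an open question (Remark~\ref{rmk:ryll}), so it cannot be used as a black box. The step of the back-and-forth that actually needs justification is: given $\ul{v}\in V^n$ and $\ul{w}\in W^n$ of the same type and a new vector $v_{n+1}$, realize the type $t(\ul{v},v_{n+1})$ over $\ul{w}$ in $W$. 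Elementary equivalence alone does not let you do this (you would need $\omega$-saturation of $W$, which you do not have); what makes it work is that every linearly independent rational type is \emph{principal}, axiomatized by the single formula ``$\ul{x}$ is linearly independent and $\pi(\ul{x})=\ul{a}$,'' where $\pi\colon V^{[n]}\to X$ is the classifying map of \S\ref{ss:class}. Principality is exactly what Proposition~\ref{prop:class} (surjectivity of $\pi$ from universality, injectivity mod $G_{\ulambda}$ from homogeneity) and Proposition~\ref{prop:homo-type} deliver, and then the back-and-forth is Proposition~\ref{prop:atomic}. Your proposal names neither the classifying map nor principality, and without them the realization step fails.

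For quantifier elimination the same issue is fatal to the ``standard Fra\"iss\'e-limit route.'' Your criterion shows that the truth of $\exists w\,\psi(\ul{v},w)$ depends only on the isomorphism type of $\operatorname{span}(\ul{v})$ as a pinned $\ulambda$-space, i.e., only on $\pi(\ul{v})\in X$. In the classical oligomorphic setting one then finishes by taking a finite disjunction over the finitely many types; here the set of good types is a subset $K'\subset k^m\times X$ with $X$ a positive-dimensional variety, so no finite disjunction exists, and ``depends only on the type'' does not by itself produce a formula without vector quantifiers. One must show that $K'$ is definable in the language of fields, and that is the real content of the paper's proof: the existential witness $w$ is moved into a fixed finite-dimensional subspace using the linear-oligomorphy of the pointwise stabilizers $G_{\ulambda}(\ul{u})$ (Proposition~\ref{prop:olig2}), reducing everything to a first-order statement about a finite-dimensional space (Lemmas~\ref{lem:quant-1} and~\ref{lem:quant-2}), after which $\phi$ is equivalent to ``$\ul{x}$ linearly independent and $\pi(\ul{\alpha},\ul{x})\in K'$.'' Note also that this strengthened oligomorphy for stabilizers of tuples is a separate result from Theorem~\ref{thm:olig} and is not implied by your appeal to Theorem~B alone. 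Your closing remark about scalar quantifiers is correct as far as it goes, but it does not repair the missing definability-of-$K'$ step.
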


Briefly, ``linearly $\omega$-categorical'' means that $V_{\ulambda}$ is determined up to isomorphism by its first-order theory, and ``vector-quantifier elimination'' means that any first-order formula about $V_{\ulambda}$ is equivalent to one where there are no quantifiers over vector-valued variables (though there could still be quantifiers over scalar-valued variables). We also explain that the theory of $V_{\ulambda}$ is decidable when $k=\ol{\bQ}$. See \S \ref{s:model} for details.

\begin{remark}
The model theory of bilinear forms has been previously studied, and some results related to Theorem~\ref{mainthm3} have appeared in this case; see \cite{Granger, Kamsma}. We also note that the book \cite{Gross} contains relevant results in this case.
\end{remark}

We hope that Theorem~\ref{mainthm3} is just a first step in the model theory of $\ulambda$-spaces. Some possible next steps are raised in following question: 

\begin{question}
Does the linear analog of the Ryll-Nardzewski theorem hold? (See Remark~\ref{rmk:ryll}.) What about linear analogs of other results from model theory (e.g., omitting types)?
\end{question}

\subsection{Further results}

Fix a vector space $V$ of countable dimension. The space $\bA^{\ulambda}$ of all $\ulambda$-structures on $V$ forms a geometric object called a \emph{$\GL$-variety}. These varieties were studied in detail in \cite{polygeom}. In a follow-up paper \cite{homoten2}, we will examine how the perspective of this paper interacts with the geometry of $\bA^{\ulambda}$. One result states that that generalized orbits on $\bA^{\ulambda}$ (introduced in \cite[\S 3]{polygeom}) correspond to ``weakly homogeneous'' $\ulambda$-spaces.

\subsection{Motivation} \label{ss:deligne}

Deligne \cite{Deligne} showed that one can ``interpolate'' the representation categories $\Rep(\fS_n)$ of finite symmetric groups to obtain a novel tensor category $\ul{\Rep}(\fS_t)$, where $t$ is a complex number. (Here $\fS_t$ is simply a formal symbol.). In recent work \cite{repst}, we generalized Deligne's construction. Let $G$ be an oligomorphic group. Given a measure $\mu$ for $G$ (in a sense that we introduce), we construct a tensor category $\ul{\operatorname{Perm}}(G; \mu)$, and in some cases, an abelian envelope $\ul{\Rep}(G; \mu)$. When $G$ is the symmetric group, this recovers Deligne's $\ul{\Rep}(\fS_t)$; the parameter $t$ corresponds to a choice of measure.

Deligne (and Deligne--Milne \cite{DeligneMilne}) also showed that one can interpolate representation categories of classical groups. We expect that there is an analog of the theory developed in \cite{repst} in the algebraic case. This motivated us to look for examples of linear-oligomorphic groups, which led to the present work. In this vein, the next questions are:

\begin{question}
What is the correct notion of measure on a linear-oligomorphic group? Are there any interesting measures on the $G_{\ulambda}$'s, outside the ones corresponding to known interpolation categories?
\end{question}

In recent work  \cite{discrete}, we showed that every ``discrete'' pre-Tannakian tensor category arises from an oligomorphic group. One might hope that algebraic oligomorphic groups (a common generalization of oligomorphic and linear oligomorphic groups, see Remark~\ref{rmk:unite}), and their super analogs, explain all pre-Tannakian categories in characteristic~0.

\subsection{Notation and terminology}

We list the most important notation:
\begin{description}[align=right,labelwidth=2.5cm,leftmargin=!]
\item[ $k$ ] A field of characteristic~0.
\item[ $\ulambda$ ] A tuple of partitions $[\lambda_1, \ldots, \lambda_r]$.
\item[ $\Str_{\ulambda}$ ] The category of $\ulambda$-spaces (\S \ref{ss:spaces}).
\item[ $\bS_{\ulambda}$ ] The sum of the Schur functors of the partitions in $\ulambda$ (\S \ref{ss:poly}).
\item[ $V_{\ulambda}$ ] The universal homogeneous countable $\ulambda$-space (Definition~\ref{defn:Vlambda}).
\item[ $G_{\ulambda}$ ] The automorphism group of $V_{\ulambda}$ (Definition~\ref{defn:Glambda}).
\item[ $\CS_X(\cC)$ ] A coslice category (\S \ref{ss:coslice}).
\end{description}
We use ``finite'' for ``finite dimensional'' and ``countable'' for ``countably infinite dimensional'' in the context of vector spaces.

\subsection*{Acknowledgments}

We thank Arthur Bik and Jan Draisma for helpful discussions.

\section{Basics of tensor spaces}

\subsection{Multi-linear forms} \label{ss:forms}

Fix a field $k$ of characteristic~0. Let $V$ be a $k$-vector space and let $n$ be a non-negative integer. An \emph{$n$-form} on $V$ is a multi-linear map $V^n \to k$, or, equivalently, a linear map $V^{\otimes n} \to k$. When $n=0$, an $n$-form on $V$ is just a scalar.

Bilinear forms decompose into symmetric and skew-symmetric pieces. There is a similar, but more complicated, decomposition of $n$-forms. For a partition $\lambda$ of $n$, let $S^{\lambda}$ be the corresponding Specht module over $k$; this is the irreducible representation of the symmetric group $\fS_n$ corresponding to $\lambda$. The \emph{Schur functor} associated to $\lambda$ is defined by
\begin{displaymath}
\bS_{\lambda}(V) = \Hom_{\fS_n}(S^{\lambda}, V^{\otimes n}),
\end{displaymath}
where $\fS_n$ acts on $V^{\otimes n}$ by permuting tensor factors. A \emph{$\lambda$-form} on $V$ is a linear map $\bS_{\lambda}(V) \to k$. When $\lambda=(n)$, the Specht module is the trivial representation of $\fS_n$, and the Schur functor $\bS_{\lambda}$ is the symmetric power $\Sym^n$. In particular, a $(2)$-form is a symmetric bilinear form.

We have a canonical isomorphism
\begin{displaymath}
V^{\otimes n} = \bigoplus_{\lambda \vdash n} S^{\lambda} \otimes \bS_{\lambda}(V),
\end{displaymath}
where the sum is over all partitions of $n$. The Specht module $S^{\lambda}$ carries a basis given by the standard tableaux of shape $\lambda$. Let $\omega$ be an $n$-form on $V$. Given a partition $\lambda$ of $n$ and a standard tableau $T$ of shape $\lambda$, we obtain a $\lambda$-form $\omega_{\lambda,T}$ on $V$ as the following composition
\begin{displaymath}
\xymatrix@C=3em{
\bS_{\lambda}(V) \ar[r]^-{T \otimes \id} &
S^{\lambda} \otimes \bS_{\lambda}(V) \ar[r] &
V^{\otimes n} \ar[r]^-{\omega} & k }
\end{displaymath}
where the second map is the canonical inclusion. The construction $\omega \mapsto (\omega_{\lambda,T})$ gives a bijection between $n$-forms and tuples consisting of a $\lambda$-form for each pair $(\lambda,T)$.

For each partition $\lambda$, fix a choice $T(\lambda)$ of standard tableau of shape $\lambda$; for example, one could use the tableau whose first row is $1, \ldots, \lambda_1$, second row is $\lambda_1+1, \ldots, \lambda_1+\lambda_2$, and so on. Write $\fa(\lambda) \subset k[\fS_n]$ for the annihilator of $T(\lambda)$. Let $\omega$ be a $\lambda$-form on $V$, with $\vert \lambda \vert=n$. Define $\omega^*$ be the unique $n$-form on $V$ satisfying
\begin{displaymath}
\omega^*_{\mu,T} = \begin{cases} \omega & \text{if $\mu=\lambda$ and $T=T(\lambda)$} \\
0 & \text{otherwise} \end{cases}
\end{displaymath}
Then $\omega \mapsto \omega^*$ is a bijection between $\lambda$-forms and $n$-forms annihilated by $\fa(\lambda)$. In this way, we can encode $\lambda$-forms as $n$-forms. For example, this procedure encodes $(2)$-forms as symmetric 2-forms.

The above discussion shows that $n$-forms and $\lambda$-forms are essentially equivalent. While $n$-forms are perhaps a bit simpler, it will ultimately be more convenient for us to work with $\lambda$-forms. We therefore base our theory on them.

\subsection{Tensor spaces} \label{ss:spaces}

A \emph{tuple of partitions}, often abbreviated to just \emph{tuple}, is an ordered tuple $\ulambda=[\lambda_1, \ldots, \lambda_r]$ where each $\lambda_i$ is a partition. The $\lambda_i$ may have different sizes. We say that $\ulambda$ is \emph{pure} if each $\lambda_i$ is non-empty. This terminology was introduced in \cite{polygeom}.

The following definition introduces the main objects of study in this paper:

\begin{definition}
Let $\ulambda=[\lambda_1, \ldots, \lambda_r]$ be a tuple of partitions. A \emph{$\ulambda$-structure} on a vector space $V$ is a tuple $\uomega=(\omega_1, \ldots, \omega_r)$ where $\omega_i$ is a $\lambda_i$-form on $V$. A \emph{$\ulambda$-space} is a vector space equipped with a $\ulambda$-structure.
\end{definition}

%\begin{example}
%We give some examples of the above definition:
%\begin{enumerate}
%\item A $[(1)]$-space is a vector space $V$ equipped with a linear functional $V \to k$.
%\item A $[(2)]$-space is a quadratic space, i.e., a vector space with a symmetric bilinear form.
%\item A $[(3)]$-space is a cubic space, i.e., a vector space with a symmetric trilinear form.
%\item A $[(2),(2)]$-space is a vector space equipped with a pair of symmetric bilinear forms.
%\item A $[(2),(1,1)]$-space is a vector space equipped with a symmetric bilinear form and a skew-symmetric bilinear form; this is equivalent to a space equipped with a bilinear form (with no assumed symmetry). \qedhere
%\end{enumerate}
%\end{example}

We now introduce some additional definitions related to $\ulambda$-spaces. We say that a $\ulambda$-space is \emph{finite}, resp.\ \emph{countable}, if its dimension is finite, resp.\ countably infinite. An \defn{embedding} $W \to V$ of $\ulambda$-spaces is an injective linear map such that the given $\ulambda$-structure on $V$ pulls back to the given $\ulambda$-structure on $W$. An \defn{isomorphism} of $\ulambda$-spaces is a bijective embedding. We let $\Str_{\ulambda}$ be the category of $\ulambda$-spaces, and we let $\Str^{\rf}_{\ulambda}$ be the category of finite $\ulambda$-spaces (in both cases the morphisms are embeddings).

\begin{remark} \label{rmk:pure}
Let $\ulambda=[\lambda_1, \ldots, \lambda_r]$ be a tuple. Reindexing if necessary, suppose that $\lambda_1=\cdots=\lambda_s=\emptyset$ and $\lambda_{s+1}, \ldots, \lambda_r$ are non-empty. Let $\umu$ be the pure tuple $[\lambda_{s+1}, \ldots, \lambda_r]$. A $\ulambda$-structure on $V$ can then be identified with a pair $(\ul{c}, \uomega)$ where $\ul{c} \in k^s$ is a tuple of scalars and $\uomega$ is a $\umu$-structure on $V$. If $V \to W$ is an embedding of $\ulambda$-spaces, then the tuples of scalars for $V$ and $W$ must be equal. We thus see that $\Str_{\ulambda}$ is equivalent to a disjoint union of copies of $\Str_{\umu}$ indexed by $k^s$. This discussion shows that the ``impure'' part of $\ulambda$ is not very interesting, and for most purposes we can restrict to pure tuples. The reason we allow impure tuples is that they appear when applying the shift operation; see \S \ref{ss:shift}.
\end{remark}

We now give two examples illustrating some aspects of infinite dimensional $\ulambda$-spaces.

\begin{example}
Two $[(1)]$-spaces with non-zero forms are isomorphic if and only if they have the same dimension. Equivalently, this means that if $V$ is a vector space then the group $\GL(V)$ of all automorphisms of $V$ acts transitively on the non-zero vectors in the dual space $V^*$. Suppose that $V$ is countable, and fix a basis. Then $\GL(V)$ can be identified with the group of all column-finite invertible matrices of size $\bN \times \bN$. Similarly, $V^*$ can be identified with the space of all row vectors of size $\bN$. The transitivity of the action thus amounts to the fact that the first row of a matrix in $\GL(V)$ can be any non-zero vector. Note that the smaller group $\bigcup_{n \ge 1} \GL_n$ does not act transitively on $V^* \setminus \{0\}$.
\end{example}

\begin{example} \label{ex:quad-space}
Suppose $k$ is algebraically closed and let $(V, \omega)$ be a $[(2)]$-space. Recall that the null space of $\omega$ consists of those vectors $v$ such that $\omega(v, -)$ is identically zero. Supposing the null space of $\omega$ vanishes and $V$ has countable dimension, it is not difficult to show that $V$ has an orthonormal basis; see \cite[Chapter~2]{Gross}. In particular, we see that any two countable $[(2)]$-spaces with zero null space are isomorphic (map one orthonormal basis to the other).

The countability hypothesis above is of crucial importance. For example, over the complex numbers there are $2^{\aleph_0}$ distinct isomorphism classes of $[(2)]$-spaces of dimension $\aleph_1$ and vanishing null space; see \cite[Chapter~2]{Gross}.
\end{example}

\subsection{Shifts of polynomial functors} \label{ss:poly}

Recall that a \defn{polynomial functor} is an endofunctor of the category of vector spaces that decomposes as a direct sum of Schur functors. The category of polynomial functors is semi-simple abelian, with Schur functors as the simple objects. For a tuple $\ulambda=[\lambda_1, \ldots, \lambda_r]$, we let $\bS_{\ulambda}=\bigoplus_{i=1}^r \bS_{\lambda_i}$. Thus every finite length polynomial functor is isomorphic to $\bS_{\ulambda}$ for some tuple $\ulambda$. We refer to \cite{expos} for additional background on polynomial functors.

We define the $n$th \emph{shift} of a polynomial functor $F$, denoted $\Sh_n(F)$, to be the functor given by $(\Sh_n{F})(V)=F(k^n \oplus V)$. It is easily seen to be a polynomial functor: in fact, we have $\Sh_n(\bS_{\lambda})=\bS_{\lambda} \oplus \cdots$, where the remaining terms are Schur functors of smaller degree. One can explicitly determine $\Sh_n(\bS_{\lambda})$ in terms of Littlewood--Richardson coefficients. Given a tuple $\ulambda$, there is another tuple $\umu$, unique up to permutation, such that $\Sh_n(\bS_{\ulambda}) \cong \bS_{\umu}$. We define $\sh_n(\ulambda)$ to be the tuple $\umu$, and $\sh_n^{\circ}(\ulambda)$ to be the pure part of $\umu$ (discard all empty partitions).

\begin{example}
We have
\begin{displaymath}
\Sym^2(k^n \oplus V) = \Sym^2(k^n) \oplus (k^n \otimes V) \oplus \Sym^2(V).
\end{displaymath}
It follows that
\begin{displaymath}
\sh_n([(2)])=[{\textstyle\binom{n+1}{2}} \cdot \emptyset, n \cdot (1), (2)].
\end{displaymath}
Here $n \cdot (1)$ indicates that the partition $(1)$ appears $n$ times.\end{example}

\subsection{Shifts of tensor spaces} \label{ss:shift}

Fix a tuple $\ulambda$. Define a category $\Sh_n(\Str_{\ulambda})$ as follows. An object a tuple $(V, \ul{v}, V')$ where $V$ is a $\ulambda$-space, $\ul{v}=(v_1, \ldots, v_n)$ are linearly independent vectors in $V$, and $V'$ is a subspace of $V$ that is complementary to $\operatorname{span}(\ul{v})$. A morphism $f \colon (V, \ul{v}, V') \to (W, \ul{w}, W')$ is a morphism of $\ulambda$-spaces $f \colon V \to W$ such that $f(v_i)=w_i$ for $1 \le i \le n$ and $f(V') \subset W'$.

\begin{proposition} \label{prop:shift-str}
Let $\umu=\sh_n(\ulambda)$. If $(V, \ul{v}, V')$ is an object of $\Sh_n(\Str_{\ulambda})$ then $V'$ carries a natural $\umu$-structure, and the functor
\begin{displaymath}
\Psi \colon \Sh_n(\Str_{\ulambda}) \to \Str_{\umu}
\end{displaymath}
given by $(V, \ul{v}, V') \mapsto V'$ is an equivalence.
\end{proposition}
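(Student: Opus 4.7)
The plan is to produce an explicit quasi-inverse to $\Psi$. First I fix, once and for all, an isomorphism of polynomial functors $\alpha \colon \Sh_n(\bS_{\ulambda}) \xrightarrow{\sim} \bS_{\umu}$, which exists by the very definition of $\umu = \sh_n(\ulambda)$. Then, for every object $(V,\ul v, V')$ of $\Sh_n(\Str_{\ulambda})$, the data $\ul v$ and $V'$ provide a canonical isomorphism $\phi_{\ul v, V'} \colon k^n \oplus V' \xrightarrow{\sim} V$ (by $(c,x) \mapsto \sum c_i v_i + x$). Applying $\bS_{\ulambda}$ and composing with $\alpha_{V'}$ gives a distinguished isomorphism
\begin{displaymath}
\beta_{(V,\ul v, V')} \colon \bS_{\umu}(V') \xrightarrow{\sim} \bS_{\ulambda}(V).
\end{displaymath}
If $\uomega$ denotes the $\ulambda$-structure on $V$, I declare the $\umu$-structure on $V'$ to be $\uomega \circ \beta_{(V,\ul v, V')}$. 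This defines $\Psi$ on objects; I define it on morphisms in the obvious way, i.e., by restriction to $V'$ (using that $f(V') \subseteq W'$).

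Next I would verify $\Psi$ is a functor, which amounts to the statement that if $f \colon (V,\ul v, V') \to (W,\ul w, W')$ is a morphism in $\Sh_n(\Str_{\ulambda})$ then $f|_{V'}$ pulls back the $\umu$-structure on $W'$ to that on $V'$. The condition $f(v_i)=w_i$ and $f(V') \subseteq W'$ ensures that, under the identifications $\phi_{\ul v, V'}$ and $\phi_{\ul w, W'}$, the map $f$ becomes $\id_{k^n} \oplus (f|_{V'})$. Hence functoriality reduces to the naturality of $\alpha$ as a morphism of polynomial functors, together with the fact that $\uomega$ is pulled back along $f$ by hypothesis.

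For the equivalence, I construct a functor $\Phi \colon \Str_{\umu} \to \Sh_n(\Str_{\ulambda})$ as follows. Given $(V', \uomega')$, let $V = k^n \oplus V'$ with $\ul v$ the standard basis of $k^n$, and equip $V$ with the $\ulambda$-structure $\uomega'  \circ \alpha_{V'} \circ \bS_{\ulambda}(\phi_{\ul v, V'})^{-1}$; on morphisms, send $g$ to $\id_{k^n} \oplus g$. One checks, again using naturality of $\alpha$, that $\Phi$ is a well-defined functor. By construction $\Psi \circ \Phi = \id_{\Str_{\umu}}$ on the nose. For the other composition, given $(V,\ul v, V')$, the object $\Phi(\Psi(V,\ul v, V'))$ is $(k^n \oplus V', \ul e, V')$ with a $\ulambda$-structure that, by design, is identified with the one on $V$ via the isomorphism $\phi_{\ul v, V'}$; this gives a natural isomorphism $\Phi \circ \Psi \cong \id$.

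The only non-formal point, and hence the one to be most careful about, is the naturality of the identification $\beta$. Specifically, one must be sure that the chosen isomorphism $\alpha \colon \Sh_n(\bS_{\ulambda}) \xrightarrow{\sim} \bS_{\umu}$ is a natural transformation between functors of $V'$ (which it is, being an isomorphism in the category of polynomial functors), and that only linear maps of the form $\id_{k^n} \oplus g$ occur when computing $\Psi$ on morphisms. Both of these hold by the definition of $\Sh_n(\Str_{\ulambda})$, so the argument goes through without surprises.
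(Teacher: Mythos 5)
Your proposal is correct and follows essentially the same route as the paper: both rest on the identification $\bS_{\ulambda}(V) \cong (\Sh_n\bS_{\ulambda})(V') \cong \bS_{\umu}(V')$ induced by $V = \operatorname{span}(\ul{v}) \oplus V'$, and on the observation that morphisms in $\Sh_n(\Str_{\ulambda})$ correspond exactly to maps of the form $\id_{k^n} \oplus f'$. The only cosmetic difference is that you exhibit an explicit quasi-inverse, whereas the paper checks essential surjectivity and fully faithfulness directly; your extra care in fixing the isomorphism $\alpha$ once and for all is a reasonable way to make the paper's implicit identifications precise.
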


\begin{proof}
Suppose that $(V, \ul{v}, V')$ is an object of $\Sh_n(\Str_{\ulambda})$, and let $\uomega$ denote the $\ulambda$-structure on $V$. We regard $\uomega$ as a linear map $\bS_{\ulambda}(V) \to k$. Making the identification $\operatorname{span}(\ul{v})=k^n$, we have $V=k^n \oplus V'$, and so
\begin{displaymath}
\bS_{\ulambda}(V)=(\Sh_n{\bS_{\ulambda}})(V')=\bS_{\umu}(V').
\end{displaymath}
Thus the $\ulambda$-structure $\uomega$ on $V$ is equivalent to a $\umu$-structure $\uomega'$ on $V'$. This is how we define $\Psi$ on objects. It is clear from the definition that $\Psi$ is essentially surjective.

Suppose now that $(W, \ul{w}, W')$ is another object of $\Sh_n(\Str_{\ulambda})$. Let $\ueta$ be the $\ulambda$-structure on $W$, and let $\ueta'$ be the corresponding $\umu$-structure on $W'$. Let $f \colon V \to W$ be a linear map such that $f(v_i)=w_i$ for $1 \le i \le n$ and $f(V') \subset W'$, and let $f' \colon V' \to W'$ be the map induced by $f$. Clearly, giving $f$ is equivalent to giving $f'$. The following two diagrams are isomorphic:
\begin{displaymath}
\xymatrix@C=4em{ \bS_{\ulambda}(V) \ar[r]^-{\uomega} \ar[d]_f & k \\
\bS_{\ulambda}(W) \ar[ru]_{\ueta} }
\qquad\qquad
\xymatrix@C=4em{ \bS_{\umu}(V') \ar[r]^-{\uomega'} \ar[d]_{f'} & k \\
\bS_{\umu}(W') \ar[ru]_{\ueta'} }
\end{displaymath}
We thus see that $f$ is a map of $\ulambda$-spaces (i.e., the first diagram commutes) if and only if $f'$ is a map of $\umu$-spaces (i.e., the second diagram commutes). Thus defining $\Psi$ on morphisms by $\Psi(f)=f'$, we see that $\Psi$ is an equivalence.
\end{proof}

\begin{example}
Suppose $\ulambda=[(2)]$. An object of $\Sh_1(\Str_{\ulambda})$ is a quadratic space $(V, \omega)$ equipped with a non-zero vector $v$ and a complementary space $V'$ to the span of $v$. The above proposition states that $V'$ naturally has a $\sh_1(\ulambda)=[(2), (1), (0)]$ structure. The $(2)$-form on $V'$ is simply the restriction of $\omega$, the $(1)$-form is $x \mapsto \omega(v, x)$, and the $(0)$-form is the scalar $\omega(v,v)$.
\end{example}

We will require a variant of the above construction as well. A \emph{pinning} of a finite $\ulambda$-space is an ordered basis, and a \emph{pinned} finite $\ulambda$-space is one equipped with a pinning. Let $U$ be an $n$-dimensional pinned finite $\ulambda$-space. Define $\Sh_U(\Str_{\ulambda})$ to be the full subcategory of $\Sh_n(\Str_{\ulambda})$ spanned by objects $(V, \ul{v}, V')$ where $\operatorname{span}(\ul{v})$ is isomorphic to $U$ as a pinned $\ulambda$-space; note that such an isomorphism is unique.

\begin{proposition} \label{prop:shift-str2}
Let $U$ be an $n$-dimensional $\ulambda$-space with a pinning, and put $\unu=\sh^{\circ}_n(\ulambda)$. Then the functor $\Psi$ from Proposition~\ref{prop:shift-str} induces an equivalence $\Sh_U(\Str_{\ulambda}) \to \Str_{\unu}$.
\end{proposition}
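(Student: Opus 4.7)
The plan is to bootstrap from Proposition~\ref{prop:shift-str} together with the observation in Remark~\ref{rmk:pure} that $\Str_{\umu}$ decomposes as a disjoint union of copies of $\Str_{\unu}$ indexed by $k^s$, where $s$ is the number of empty partitions in $\umu$. The subcategory $\Sh_U(\Str_{\ulambda})$ should correspond, under $\Psi$, to a single copy of $\Str_{\unu}$ in this decomposition, namely the one indexed by the scalars determined by the pinned structure on $U$.

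The key computation is to identify, intrinsically, which components of the $\umu$-structure on $V'$ correspond to the empty partitions of $\umu$. Using the decomposition $V = \operatorname{span}(\ul{v}) \oplus V' = k^n \oplus V'$ from the pinning, one has
\begin{displaymath}
\bS_{\ulambda}(V) = (\Sh_n \bS_{\ulambda})(V') = \bS_{\umu}(V') = \bigoplus_{\mu} \bS_{\mu}(V'),
\end{displaymath}
graded by the ``degree in $V'$,'' i.e., by $|\mu|$. The degree-zero part is exactly $\bS_{\ulambda}(k^n)$, and so the components of $\uomega'$ associated to the empty partitions are obtained by restricting $\uomega \colon \bS_{\ulambda}(V) \to k$ along $\bS_{\ulambda}(k^n) \hookrightarrow \bS_{\ulambda}(V)$. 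But $k^n = \operatorname{span}(\ul{v})$ under the pinning $\ul{v}$, so this restriction is precisely the pinned $\ulambda$-structure that $\operatorname{span}(\ul{v})$ inherits from $V$. In particular, the scalar tuple in $k^s$ that Remark~\ref{rmk:pure} attaches to $\Psi(V,\ul{v},V')$ depends only on the isomorphism class of $\operatorname{span}(\ul{v})$ as a pinned $\ulambda$-space.

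Putting these together: under $\Psi$ and the equivalence $\Str_{\umu} \simeq \coprod_{\ul{c} \in k^s} \Str_{\unu}$, the condition that $\operatorname{span}(\ul{v}) \cong U$ as pinned $\ulambda$-spaces is exactly the condition that $\Psi(V,\ul{v},V')$ lies in the copy of $\Str_{\unu}$ indexed by the scalar tuple $\ul{c}_U$ determined by $U$. Thus $\Psi$ restricts to a functor $\Sh_U(\Str_{\ulambda}) \to \Str_{\unu}$ which is essentially surjective (given any $\unu$-structure on $V'$, one enhances it by the scalars $\ul{c}_U$ to get a $\umu$-structure and then applies Proposition~\ref{prop:shift-str} in reverse to produce an object of $\Sh_U(\Str_{\ulambda})$) and fully faithful (inherited from Proposition~\ref{prop:shift-str}, since $\Sh_U$ is a full subcategory).

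The only subtle point—and the one that should receive explicit mention—is the parenthetical remark in the definition that the isomorphism $\operatorname{span}(\ul{v}) \cong U$ of pinned $\ulambda$-spaces is unique when it exists; this ensures that the assignment of a well-defined scalar tuple $\ul{c}_U \in k^s$ to $U$ is unambiguous, so the statement of the proposition makes sense. The rest is essentially bookkeeping on top of Proposition~\ref{prop:shift-str} and Remark~\ref{rmk:pure}.
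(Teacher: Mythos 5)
Your argument is correct and follows essentially the same route as the paper's proof: both identify the scalar parameters $k^s$ from Remark~\ref{rmk:pure} canonically with $\bS_{\umu}(0)^* = \bS_{\ulambda}(k^n)^*$, i.e.\ with the $\ulambda$-structures on $\operatorname{span}(\ul{v})$, and then observe that $\Sh_U(\Str_{\ulambda})$ is precisely the summand where this structure agrees with that of the pinned space $U$. Your extra remarks on essential surjectivity, full faithfulness, and the uniqueness of the pinned isomorphism are correct elaborations of points the paper leaves implicit.
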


\begin{proof}
Let $\umu=\sh_n(\ulambda)$. Recall from Remark~\ref{rmk:pure} that $\cC_{\umu}$ is a disjoint union of copies of $\cC_{\unu}$ parametrized by $k^s$ where $s$ is the number of empty partitions in $\umu$. More canonically, this $k^s$ is identified with $\bS_{\umu}(0)^*$, where~0 is the zero vector space, and so
\begin{displaymath}
\cC_{\umu} \cong \coprod_{\ueta \in \bS_{\umu}(0)^*} \cC_{\nu}.
\end{displaymath}
Given a $\umu$-space $(V, \uomega)$, the corresponding $\ueta$ is the pull-back of $\uomega$ to $0 \subset V$. In our case we have $\bS_{\umu}(-)=\bS_{\ulambda}(k^n \oplus -)$, and so $\bS_{\umu}(0)=\bS_{\ulambda}(k^n)$. Thus the copies of $\cC_{\nu}$ above parametrize the $\ulambda$-structures on $k^n$. The category $\Sh_U(\cC_{\ulambda})$ is simply the summand where $(k^n, \ueta) \cong U$ as pinned $\ulambda$-spaces.
\end{proof}

\subsection{Coslice categories} \label{ss:coslice}

Let $X$ be an object of a category $\cC$. Recall that the \defn{coslice category} $\CS_X(\cC)$ is the category of objects over $X$. Precisely, an object of $\CS_X(\cC)$ is a pair $(Y, \alpha)$ where $Y$ is an object of $\cC$ and $\alpha \colon X \to Y$ is a morphism in $\cC$. A morphism $(Y, \alpha) \to (Z, \beta)$ in $\CS_X(\cC)$ is a morphism $\gamma \colon Y \to Z$ in $\cC$ such that $\gamma \circ \alpha = \beta$.

Coslice categories will be important to us due to their appearance in Fra\"iss\'e's theorem (Theorem~\ref{A:fraisse}). They are also closely related to shift categories, as we now explain. (This is why we are interested in shift categories.) Let $U$ be a finite pinned $\ulambda$-space. If $(V, \ul{v}, V')$ is an object of $\Sh_U(\Str_{\ulambda})$ then we have a natural embedding $U \to V$ by mapping the given basis vector of $U$ to $\ul{v}$. One easily sees that this defines a functor
\begin{displaymath}
\Sh_U(\Str_{\ulambda}) \to \CS_U(\Str_{\ulambda}).
\end{displaymath}
We make one simple observation here:

\begin{proposition} \label{prop:shift-slice}
The above functor is essentially surjective.
\end{proposition}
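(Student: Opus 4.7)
The plan is to construct, for any object $(V, \alpha)$ of $\CS_U(\Str_{\ulambda})$, an explicit object $(V, \ul{v}, V')$ of $\Sh_U(\Str_{\ulambda})$ whose image under the given functor is $(V, \alpha)$ itself (not merely isomorphic to it), so essential surjectivity will be on the nose.

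First I would fix the pinning of $U$, i.e.\ its ordered basis $(u_1, \ldots, u_n)$, and set $v_i = \alpha(u_i)$ for $1 \le i \le n$. Since $\alpha$ is an embedding (hence injective), the tuple $\ul{v} = (v_1, \ldots, v_n)$ consists of linearly independent vectors in $V$. Moreover, because $\alpha$ is a morphism of $\ulambda$-spaces, the $\ulambda$-structure on $V$ pulls back along $\alpha$ to the one on $U$; in other words, $\alpha$ carries the pinned $\ulambda$-space $U$ isomorphically onto the subspace $\operatorname{span}(\ul{v}) \subset V$ with its induced $\ulambda$-structure and with the pinning $\ul{v}$. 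This is exactly the condition required for membership in $\Sh_U(\Str_{\ulambda})$.

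Next I would pick any complement $V'$ to the finite-dimensional subspace $\operatorname{span}(\ul{v})$ in $V$; such a complement exists (for infinite-dimensional $V$ this uses the standard basis-extension argument, i.e.\ Zorn's lemma). The triple $(V, \ul{v}, V')$ is then an object of $\Sh_n(\Str_{\ulambda})$ satisfying the pinned-isomorphism condition cutting out $\Sh_U(\Str_{\ulambda})$, so it lies in $\Sh_U(\Str_{\ulambda})$. By construction, the embedding $U \to V$ sending the chosen basis of $U$ to $\ul{v}$ is precisely $\alpha$, so the functor indeed sends $(V, \ul{v}, V')$ to $(V, \alpha)$.

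There is no serious obstacle here: the only substantive point is the existence of a complement $V'$, which is automatic for vector spaces. The essential content of the proposition is merely the observation that the extra data of a complement is something one can always choose, so passing from $\CS_U$ to $\Sh_U$ loses no objects up to isomorphism (though of course it may create non-isomorphic lifts of a given coslice object, which is why the functor is only essentially surjective, not an equivalence).
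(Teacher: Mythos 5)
Your proof is correct and is essentially identical to the paper's: take $\ul{v}$ to be the image of the pinning of $U$ under $\alpha$, choose any complement $V'$ to $\alpha(U)$, and observe that $(V,\ul{v},V')$ lies in $\Sh_U(\Str_{\ulambda})$ and maps to $(V,\alpha)$. The extra details you supply (injectivity of $\alpha$ giving linear independence, the pinned-isomorphism condition) are exactly the points the paper leaves implicit.
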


\begin{proof}
Suppose that $(V, \alpha)$ is an object $\CS_U(\Str_{\ulambda})$, where $\alpha \colon U \to V$ is an embedding. Let $\ul{v}$ be the image of the basis of $U$ under $\alpha$, and let $V'$ be a subspace of $V$ complementary to $\alpha(U)$. Then $(V, \ul{v}, V')$ is an object of $\Sh_U(\cC)$ that maps to $(V, \alpha)$ under the above functor.
\end{proof}

\section{Homogeneous tensor spaces}

\subsection{Universal spaces}

We are primarily interested in homogeneous spaces in this paper. However, to study them we will require some results about universal spaces. We therefore examine them now. We begin by giving the formal definition:

\begin{definition}
Let $V$ be a $\ulambda$-space.
\begin{enumerate}
\item $V$ is \defn{universal} if every finite $\ulambda$-space embeds into $V$.
\item $V$ is \defn{$d$-universal} (for $d \in \bN$) if every $d$-dimensional $\ulambda$-space embeds into $V$. \qedhere
\end{enumerate}
\end{definition}

The following is our main result on these spaces:

\begin{theorem} \label{thm:univ}
Let $\ulambda$ be a pure tuple.
\begin{enumerate}
\item A universal countable $\ulambda$-space exists.
\item A $d$-universal finite $\ulambda$-space exists (for any $d \in \bN$).
\item If $V$ is a universal countable $\ulambda$-space over $k$ and $k'/k$ is any field extension then $V \otimes_k k'$ is a universal $\ulambda$-space over $k'$.
\end{enumerate}
\end{theorem}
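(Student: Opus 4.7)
My plan is to prove the three parts roughly in the order (b), (a), (c), since the finite-dimensional statement (b) supplies the building blocks for the other two.

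For part (b), I would give an explicit construction. Fix $U = k^d$ and write $\bA = \bS_{\ulambda}(U)^*$ for the finite-dimensional space of $\ulambda$-structures on $U$. Note that $d$-universality of a $\ulambda$-space $V$ is equivalent to the surjectivity of the ``restriction map''
\[
\rho_V \colon \mathrm{Inj}(U,V) \to \bA, \qquad \phi \mapsto \phi^*\Omega_V,
\]
where $\Omega_V$ denotes the $\ulambda$-structure on $V$. I would take $V = U^{\oplus N}$ for $N$ sufficiently large, equipped with the ``direct sum'' $\ulambda$-structure $\Omega_V = \bigoplus_{j=1}^N \omega_0$, where $\omega_0 \in \bA$ is a single fixed structure chosen to have $\mathrm{End}(U)$-orbit that $k$-linearly spans $\bA$ (equivalently, $\omega_0$ has a non-zero component in every irreducible $\GL(U)$-subrepresentation of $\bA$; such $\omega_0$ exist over any infinite field, in particular in characteristic $0$). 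For an embedding $\phi = (\phi_1, \ldots, \phi_N) \colon U \hookrightarrow V$, the pullback is $\sum_j \phi_j^*\omega_0$; as the $\phi_j$ range over $\mathrm{End}(U)$, polarization ensures these sums fill out the $k$-linear span of the $\mathrm{End}(U)$-orbit of $\omega_0$, which by assumption is all of $\bA$. Choosing $N$ large enough also allows us to guarantee injectivity of the combined map $U \to V$.

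For part (a), I would iteratively assemble a chain $V_0 \subseteq V_1 \subseteq V_2 \subseteq \cdots$ of finite $\ulambda$-spaces in which $V_n$ is $n$-universal. Given $V_n$, part (b) produces an $(n+1)$-universal finite $\ulambda$-space $T_{n+1}$; since $V_n$ is a finite $\ulambda$-space of dimension at most $\dim V_n$, and $T_{n+1}$ is in particular $(\dim V_n)$-universal for $n$ large, after possibly enlarging $T_{n+1}$ via (b) we can arrange an embedding $V_n \hookrightarrow V_{n+1} := T_{n+1}$. The union $V = \bigcup_n V_n$ is countable-dimensional; and for any $d$, every $d$-dimensional finite $\ulambda$-space embeds in $V_d \subseteq V$, so $V$ is universal.

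For part (c), given a finite $\ulambda$-space $W'$ over $k'$ of dimension $d$, it suffices to embed $W'$ into $V_d \otimes_k k'$ for some finite $d$-universal subspace $V_d \subseteq V$. So I reduce to showing: if $V_d$ is the $d$-universal finite $\ulambda$-space constructed in (b), then $V_d \otimes_k k'$ is $d$-universal over $k'$. The point is that the construction in (b) is stable under base change: the space $\bA \otimes_k k'$ is still the space of $\ulambda$-structures on $k'^d$, the element $\omega_0 \otimes 1$ still has $\mathrm{End}_{k'}(k'^d)$-orbit spanning $\bA \otimes_k k'$ (since the spanning statement over $k$ is equivalent to the $k$-linear map $\mathrm{End}(U) \otimes \bA \to \bA$, $\phi \otimes \omega \mapsto \phi^*\omega$, being surjective, which is preserved by flat extension $k \to k'$), and the polarization argument is unchanged. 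Hence the same decomposition $\sum_j \phi_j^*\omega_0$ realizes every $k'$-rational target structure.

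The main obstacle I expect is the representation-theoretic heart of (b): proving that a generic $\omega_0$ has the property that pullbacks $\phi^*\omega_0$ over $\phi \in \mathrm{End}(U)$ span $\bA$. This reduces to a statement about cyclic vectors for a polynomial $\GL(U)$-action, and in characteristic $0$ it should follow from the semisimplicity of $\bA$ as a $\GL(U)$-representation combined with a transcendence/genericity argument to ensure nonzero projection onto each isotypic component. The rest of the proof is comparatively mechanical: an iterated embedding argument for (a) and a flat base change argument for (c).
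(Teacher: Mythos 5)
There is a genuine gap at what you yourself identify as the representation-theoretic heart of (b): the passage from ``the $\mathrm{End}(U)$-orbit of $\omega_0$ spans $\bA$ linearly'' to ``the sums $\sum_{j=1}^N \phi_j^*\omega_0$ fill out $\bA$.'' The set of achievable pullbacks is the additive sub-semigroup of $\bA$ generated by the orbit $S=\{\phi^*\omega_0 : \phi\in\mathrm{End}(U)\}$, not its $k$-linear span, and over the fields the theorem permits (any field of characteristic $0$, e.g.\ $k=\bR$ or $\bQ$) these can differ. Concretely, take $\ulambda=[(2)]$, $k=\bR$, and $\omega_0$ a positive definite quadratic form on $U$: every $\phi_j^*\omega_0$ is positive semidefinite, hence so is every finite sum, so no negative definite form on $U$ is ever realized as a pullback from $U^{\oplus N}$ --- yet the $\mathrm{End}(U)$-orbit of such an $\omega_0$ does span $\Sym^2(U)^*$, and positive definite forms are Zariski-dense, so neither your spanning criterion nor a genericity/transcendence argument excludes this choice. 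Polarization cannot close the gap, because polarization identities produce linear combinations with negative coefficients (e.g.\ $4xy=(x+y)^2-(x-y)^2$), and subtraction is exactly what the semigroup lacks. The same defect propagates into your part (c), whose flat-base-change justification is again phrased entirely in terms of linear spans.

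The repair is to abandon genericity and choose the building block to be \emph{decomposable}, which is what the paper does. Lemma~\ref{lem:univ-1} first reduces to $n$-forms (tensor powers) and uses $\omega=\sum_{i\ge 1} x_{i,1}\otimes\cdots\otimes x_{i,n}$, a sum of rank-one tensors in pairwise disjoint sets of variables; the point is that an arbitrary target coefficient $c_i\in k$, of either sign, can be absorbed into a single linear factor via $x_{i,1}\mapsto c_i\,y_{a_1(i)}$, so every $n$-form on a $d$-dimensional space is hit, over every field and with no representation theory. One then passes from $n$-forms to $\lambda$-forms by restricting to sub-tuples (Lemma~\ref{lem:univ-2}) and assembles general $\ulambda$ by direct sums (Lemma~\ref{lem:univ-3}), the latter being essentially your $U^{\oplus N}$ idea. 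Your part (a) (union of an increasing chain of $d$-universal finite spaces) and part (c) (the construction in (b) is visibly field-independent, and any universal $V$ contains an embedded copy of each finite $d$-universal space) are structurally sound once (b) is repaired in this way, and match the paper's argument up to packaging; you should also say a word about how injectivity of the combined map $U\to V$ is arranged without disturbing the pullback, which the paper handles with the extra blocks in the fourth line of its definition of $\iota^{\dag}$.
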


Some related results appear in the literature; see Remark~\ref{rmk:uni-str}. We note that the purity hypothesis in the theorem is necessary by Remark~\ref{rmk:pure}. We require a few lemmas before proving the theorem. We begin with the key special case. For this, it will be convenient to use $n$-spaces (i.e., spaces equipped with $n$-forms) instead of $\ulambda$-spaces.

\begin{lemma} \label{lem:univ-1}
Let $n$ be a positive integer. Let $V$ be a vector space with basis $\{v_{i,j}\}_{1 \le i, 1 \le j \le n}$, and let $V_m$ be the span of the $v_{i,j}$ with $1 \le i \le m$ and $1 \le j \le n$. Let $x_{i,j} \in V^*$ be the dual vector to $v_{i,j}$, and consider the $n$-form
\begin{displaymath}
\omega = \sum_{i=1}^{\infty} (x_{i,1} \otimes \cdots \otimes x_{i,n}).
\end{displaymath}
Then $(V, \omega)$ is a universal $n$-space. Moreover, given $d$ there exists $m$, such that $V_m$ is a $d$-universal $n$-space.
\end{lemma}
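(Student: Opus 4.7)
The plan is to reformulate the lemma as a concrete linear-algebra question about choosing finitely many linear functionals on $W$. An embedding $f \colon W \to V$ is determined by the dual data $\alpha_{i,j} := x_{i,j} \circ f \in W^*$; conversely, any collection $\{\alpha_{i,j}\}$ with only finitely many nonzero entries defines a unique linear map $f \colon W \to V$ by $f(w) = \sum_{i,j} \alpha_{i,j}(w) v_{i,j}$, and under this correspondence one computes
\[ f^*\omega = \sum_{i=1}^{\infty} \alpha_{i,1}\otimes\cdots\otimes\alpha_{i,n} \in (W^*)^{\otimes n}, \]
while $f$ is injective if and only if $\bigcap_{i,j} \ker \alpha_{i,j} = 0$. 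Given a finite $n$-space $(W,\eta)$ of dimension $d$, the lemma thus reduces to choosing a family $\{\alpha_{i,j}\}$ that realizes $\eta$ as the displayed sum and simultaneously separates points of $W$.

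To realize $\eta$ as this sum I use that $(W^*)^{\otimes n}$ is spanned by pure tensors: expanding $\eta$ in the basis $\{y_{a_1}\otimes\cdots\otimes y_{a_n}\}$ associated to a chosen basis $y_1,\ldots,y_d$ of $W^*$ gives a representation $\eta = \sum_{i=1}^N \beta_{i,1}\otimes\cdots\otimes\beta_{i,n}$ with $N \leq d^n$. Set $\alpha_{i,j} := \beta_{i,j}$ for $1 \leq i \leq N$. Assuming $n \geq 2$, I then graft on $d$ extra rows to witness injectivity: for $i = N+1,\ldots,N+d$ put $\alpha_{i,1} = y_{i-N}$ and $\alpha_{i,j}=0$ for all $j \geq 2$, and take $\alpha_{i,j}=0$ for $i > N+d$. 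The vanishing of $\alpha_{i,2}$ on the grafted rows kills the corresponding pure tensors in the sum, so $f^*\omega$ is unaffected and still equals $\eta$; meanwhile $\bigcap_{a=1}^d \ker y_a = 0$ forces $f$ to be injective. The image of $f$ lies in $V_{N+d}$, so $V_m$ is $d$-universal whenever $m \geq d^n + d$. The degenerate case $n=1$ is handled by a one-line modification, for instance setting $\alpha_a = y_a$ for $1 \leq a \leq d$ and $\alpha_{d+1} = \eta - \sum_{a=1}^d y_a$, which yields $m = d+1$.

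The only real obstacle is reconciling $f^*\omega = \eta$ with injectivity simultaneously. The observation that dissolves it for $n \geq 2$ is that appending extra pure tensors each having at least one zero factor contributes nothing to the pullback, freeing those rows to enforce the injectivity condition. The uniform second assertion then follows because the bound $N \leq d^n$ depends only on $n$ and $d$.
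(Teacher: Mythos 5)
Your proposal is correct and is essentially the paper's argument: both realize $\eta$ by expanding it over the $d^n$ pure-tensor basis elements of $(W^*)^{\otimes n}$ (absorbing coefficients into one factor), define the embedding via its dual data, and append $d$ extra rows each containing a zero factor to force injectivity without perturbing the pullback, yielding the same bound $m=d^n+d$. The only differences are cosmetic (which tensor slot carries the zero in the grafted rows, and your explicit treatment of $n=1$, which the paper dismisses as clear).
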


\begin{proof}
The result is clear for $n=1$, so we assume $n>1$. Let $(W, \eta)$ be a finite dimension $n$-space, and let $y_1, \ldots, y_d$ be a basis for $W^*$. Let $(a_1, \ldots, a_n) \colon [d^n] \to [d]^n$ be a bijection; here $[d^n]$ denotes the set of integers $\{1, \ldots, d^n\}$ and $[d]^n$ denotes the set of $n$-tuples with values in $\{1, \ldots, d\}$. We thus see that $y_{a_1(i)} \otimes \cdots \otimes y_{a_n(i)}$ indexes a basis of $(W^*)^{\otimes n}$ as $i$ varies in $[d^n]$. Write
\begin{displaymath}
\eta = \sum_{i=1}^{d^n} c_i \cdot (y_{a_1(i)} \otimes \cdots \otimes y_{a_n(i)}).
\end{displaymath}
Let $m=d^n+d$, and let $\omega'$ denote the restriction of $\omega$ to $V_m$. Define a linear map $\iota^{\dag} \colon V_m^* \to W^*$ by
\begin{displaymath}
\iota^{\dag}(x_{i,j}) = \begin{cases}
c_i y_{a_1(i)} & \text{if $1 \le i \le d^n$ and $j=1$} \\
y_{a_j(i)} & \text{if $1 \le i \le d^n$ and $j>1$} \\
0 & \text{if $d^n+1 \le i \le d^n+n$ and $j=1$} \\
y_{i-d^n} & \text{if $d^n+1 \le i \le d^n+d$ and $j>1$}
\end{cases}
\end{displaymath}
and let $\iota \colon W \to V_m$ be the dual map. The first two lines above ensures that $\iota^{\dag}$ maps the $i$th term in the sum defining $\omega'$ to the $i$th term in the sum defining $\eta$ for $1 \le i \le d^n$, while the third line ensures that $\iota^{\dag}$ kills the remaining terms of $\omega'$. Thus $\iota^*(\omega')=\eta$. The fourth line in the definition of $\iota^{\dag}$ ensures that $\iota^{\dag}$ is surjective, and so $\iota$ is injective. We thus see that $\iota$ is an embedding of $n$-spaces. Hence $V_m$ is $d$-universal, and $V$ is universal.
\end{proof}

We now prove two additional lemmas that will allow us to deduce the theorem from the above lemma and some formal manipulations.

\begin{lemma} \label{lem:univ-2}
Let $\ulambda=[\lambda_1, \ldots, \lambda_r]$ be a pure tuple, let $1 \le s \le r$, and let $\ulambda'=[\lambda_1, \ldots, \lambda_s]$. Suppose that $(V, \uomega)$ is a universal (resp.\ $d$-universal) $\ulambda$-space, and put $\uomega'=(\omega_1, \ldots, \omega_s)$. Then $(V, \uomega')$ is a universal (resp.\ $d$-universal) $\ulambda'$-space.
\end{lemma}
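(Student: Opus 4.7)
The plan is a short formal reduction: enlarge the $\ulambda'$-structure on $W$ to a $\ulambda$-structure by padding with zero forms, then invoke universality of $(V,\uomega)$.

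More precisely, suppose $(W,\ueta')$ is a finite $\ulambda'$-space (with $\dim W \le d$ in the $d$-universal case), where $\ueta'=(\eta_1,\ldots,\eta_s)$. For each $s<i\le r$, set $\eta_i=0$, viewed as the zero element of $\Hom(\bS_{\lambda_i}(W),k)$; this makes sense because $\ulambda$ is pure, so the relevant Schur functors are genuine functors on $W$. Then $\ueta=(\eta_1,\ldots,\eta_r)$ is a $\ulambda$-structure on $W$, and $\dim W$ is unchanged. By the universal (resp.\ $d$-universal) property of $(V,\uomega)$, there is an embedding of $\ulambda$-spaces $\iota\colon (W,\ueta)\to (V,\uomega)$, which by definition means $\iota$ is an injective linear map with $\iota^*\omega_i=\eta_i$ for all $1\le i\le r$. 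Restricting attention to the first $s$ coordinates, we have $\iota^*\omega_i=\eta_i$ for $1\le i\le s$, i.e., $\iota^*\uomega'=\ueta'$, so $\iota$ is an embedding $(W,\ueta')\to (V,\uomega')$ of $\ulambda'$-spaces.

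This proves both statements simultaneously: universality covers arbitrary finite $W$, while $d$-universality follows because the construction above preserves the dimension of $W$. There is no real obstacle here; the only point worth remarking on is the use of purity of $\ulambda$, which guarantees that the padding forms $\eta_i=0$ for $i>s$ can be chosen without any compatibility constraint (per Remark~\ref{rmk:pure}, an impure slot would carry a fixed scalar that must match, but that case is excluded by hypothesis).
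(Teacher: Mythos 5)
Your proposal is correct and coincides with the paper's own proof: both pad the $\ulambda'$-structure on $W$ with zero forms to obtain a $\ulambda$-structure, embed via universality of $(V,\uomega)$, and observe that the same map is an embedding of $\ulambda'$-spaces. The added remark about purity is harmless but not needed for this step; the paper invokes purity elsewhere (cf.\ Remark~\ref{rmk:pure}) rather than in this lemma's proof.
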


\begin{proof}
Let $(W, \ueta')$ be a finite dimensional $\ulambda'$-space, where $\ueta'=(\eta_1, \ldots, \eta_s)$. Define $\eta_{s+1}=\cdots=\eta_r=0$, so that $\ueta=(\eta_1, \ldots, \eta_r)$ is a $\ulambda$-structure on $W$. By hypothesis, we have an embedding $(W, \ueta) \to (V, \uomega)$ of $\ulambda$-spaces. Of course, this is also defines an embedding $(W, \ueta') \to (V, \uomega')$ of $\ulambda'$-spaces, which shows that $(V, \uomega')$ is universal. The proof for $d$-universal is the same.
\end{proof}

Suppose now that $\ulambda=[\lambda_1, \ldots, \lambda_r]$ and $\umu=[\mu_1, \ldots, \mu_s]$ are two tuples. Define
\begin{displaymath}
\ulambda \cup \umu=[\lambda_1, \ldots, \lambda_r, \mu_1, \ldots, \mu_s].
\end{displaymath}
Suppose that $(V, \uomega)$ is a $\ulambda$-space and $(V', \uomega')$ is a $\umu$-space. We regard $\uomega_i$ as a $\lambda_i$-structure on $V \oplus V'$ by pull-back along the projection $V \oplus V' \to V$, and similarly for $\uomega_j'$. In this way,
\begin{displaymath}
(\omega_1, \ldots, \omega_r, \omega'_1, \ldots, \omega'_s)
\end{displaymath}
is a $(\ulambda \cup \umu)$-structure on $V \oplus V'$.

\begin{lemma} \label{lem:univ-3}
Maintain the above notation. Suppose that $V$ is a universal (resp.\ $d$-universal) $\ulambda$-space and $V'$ is a universal (resp.\ $d$-universal) $\umu$-space. Then $V \oplus V'$ is a universal (resp.\ $d$-universal) $(\ulambda \cup \umu)$-space.
\end{lemma}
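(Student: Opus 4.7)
The plan is to construct an embedding $W \hookrightarrow V \oplus V'$ by the diagonal trick, using the given embeddings into the two factors separately. Given a finite (or $d$-dimensional) $(\ulambda \cup \umu)$-space $(W, \ueta)$, I would first split the structure as $\ueta^{(1)} = (\eta_1, \ldots, \eta_r)$ and $\ueta^{(2)} = (\eta'_1, \ldots, \eta'_s)$, so that $(W, \ueta^{(1)})$ is a $\ulambda$-space and $(W, \ueta^{(2)})$ is a $\umu$-space, each of the same dimension as $W$.

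By the universality (resp.\ $d$-universality) hypotheses, there exist embeddings $f \colon (W, \ueta^{(1)}) \to (V, \uomega)$ and $g \colon (W, \ueta^{(2)}) \to (V', \uomega')$. I then define $\Delta \colon W \to V \oplus V'$ by $\Delta(w) = (f(w), g(w))$. Injectivity is immediate since $f$ (or $g$) is already injective. To verify that $\Delta$ is an embedding of $(\ulambda \cup \umu)$-spaces, recall that the $(\ulambda \cup \umu)$-structure on $V \oplus V'$ is given by $\pi_V^*\omega_i$ for the $\lambda_i$-components and $\pi_{V'}^*\omega'_j$ for the $\mu_j$-components, where $\pi_V$ and $\pi_{V'}$ are the projections. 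Then
\begin{displaymath}
\Delta^*(\pi_V^*\omega_i) = (\pi_V \circ \Delta)^* \omega_i = f^* \omega_i = \eta_i,
\end{displaymath}
and similarly $\Delta^*(\pi_{V'}^*\omega'_j) = g^*\omega'_j = \eta'_j$, so the full $(\ulambda \cup \umu)$-structure on $V \oplus V'$ pulls back to $\ueta$.

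The $d$-universal statement follows verbatim: if $W$ is $d$-dimensional, then $(W, \ueta^{(1)})$ and $(W, \ueta^{(2)})$ are each $d$-dimensional, so the $d$-universality of $V$ and $V'$ provides the required $f$ and $g$. There is no real obstacle here --- the content of the lemma is just the observation that pulling back the sum structure along a diagonal map decouples the two factors, combined with the trivial fact that a diagonal of injective maps is injective.
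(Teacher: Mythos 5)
Your proposal is correct and is essentially identical to the paper's proof: the paper also splits $\ueta$ into its $\ulambda$- and $\umu$-parts, obtains embeddings $\phi$ and $\psi$ into $V$ and $V'$ respectively, and uses the map $w \mapsto (\phi(w), \psi(w))$ (written there as $\phi \oplus \psi$), leaving the pullback verification as an easy check that you have simply written out. No gaps.
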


\begin{proof}
Let $(W, \ueta)$ be a finite dimensional $(\ulambda \cup \umu)$-space, and write $\ueta=(\eta_1, \ldots, \eta_r, \eta'_1, \ldots, \eta'_s)$. By hypothesis, there are embeddings
\begin{displaymath}
\phi \colon (W, \eta_1, \ldots, \eta_r) \to (V, \uomega), \qquad
\psi \colon (W, \eta'_1, \ldots, \eta'_s) \to (V', \uomega')
\end{displaymath}
of $\ulambda$- and $\umu$-spaces. One easily verifies that $\phi \oplus \psi \colon W \to V \oplus V'$ is an embedding of $(\ulambda \cup \umu)$-spaces, and so $V \oplus V'$ is universal. The proof for $d$-universal is the same.
\end{proof}

\begin{proof}[Proof of Theorem~\ref{thm:univ}]
(a) Let $\ulambda=[\lambda_1, \ldots, \lambda_r]$ be a pure tuple. Let $n$ be the size of $\lambda_1$. Let $\umu$ be the tuple such that $\bS_{\umu}$ is the $n$th tensor power functor; thus a $\umu$-structure is equivalent to giving an $n$-form. By Lemma~\ref{lem:univ-1} there is a universal countable $\umu$-space. Since $\lambda_1$ is one of the $\mu_i$'s, it follows from Lemma~\ref{lem:univ-2} that there is a universal countable $\lambda_1$-space. Similarly for the other $\lambda_i$'s. Lemma~\ref{lem:univ-3} now shows that there is a universal countable $\ulambda$-space.

(b) This follows just like (a).

(c) Let $\Phi \colon \Str^{\rf}_{\ulambda,k} \to \Str^{\rf}_{\ulambda,k'}$ be the base change functor, and identify countable $\ulambda$-spaces with ind-objects in $\Str^{\rf}_{\ulambda}$ (see \S \ref{ss:ind}). By Proposition~\ref{A:Phi-univ}, it suffices to show that $\Phi$ carries some universal ind-object of $\Str^{\rf}_{\ulambda,k}$ to a universal ind-object of $\Str^{\rf}_{\ulambda,k'}$. Let $V$ (resp.\ $V'$) be the universal countable $\ulambda$-space over $k$ (resp.\ $k'$) constructed in (a). By inspection, $V'=\Phi(V)$; the point is simply that the field did not figure into the construction at all. This completes the proof.
\end{proof}

\begin{remark} \label{rmk:uni-str}
Suppose $k$ is an algebraically closed field and $\ulambda$ is a pure tuple. Let $V$ be a $k$-vector space of countable dimension, and let $\bA^{\ulambda}$ be the infinite dimensional affine scheme parametrizing $\ulambda$-structures on $V$ (see \cite[\S 1.1]{polygeom}). The general linear group $\GL(V)$ acts on $\bA^{\ulambda}$, and a point is called \defn{$\GL$-generic} if it has dense $\GL(V)$-orbit \cite[\S 3.4]{polygeom}. By \cite[Proposition~3.13]{polygeom}, $\GL$-generic points exist; the proof is essentially the same as the proof of Theorem~\ref{thm:univ}(a) given above. An important result \cite[Corollary~2.6.3]{BDDE} implies that $(V, \uomega)$ is universal if and only if $\uomega$ is $\GL$-generic. (We note that this was first proved in the case symmetric powers by \cite{KaZ2}.)
\end{remark}

\subsection{Homogeneous spaces}

We now reach the central concept of this paper:

\begin{definition}
A $\ulambda$-space $V$ is \emph{homogeneous} if for any finite dimensional subspaces $W$ and $W'$ of $V$, any isomorphism $W \to W'$ of $\ulambda$-spaces extends to an automorphism of $V$.
\end{definition}

The following is the first main theorem of this paper:

\begin{theorem} \label{thm:homo}
Let $\ulambda$ be a pure tuple.
\begin{enumerate}
\item A universal homogeneous countable $\ulambda$-space exists.
\item Any two universal homogeneous countable $\ulambda$-spaces are isomorphic.
\item If $V$ is a universal homogeneous countable $\ulambda$-space over $k$ and $k'/k$ is any field extension then $V \otimes_k k'$ is a universal homogeneous $\ulambda$-space over $k'$.
\end{enumerate}
\end{theorem}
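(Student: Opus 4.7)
The plan is to deduce parts (a) and (b) from the categorical Fra\"iss\'e theorem stated in the appendix, and (c) by combining that with the base-change principle of Theorem~\ref{thm:univ}(c). To apply Fra\"iss\'e, one must verify essential smallness, the joint embedding property (JEP), and the amalgamation property (AP) for $\Str^{\rf}_{\ulambda}$. Essential smallness is immediate. JEP is also straightforward: given finite $\ulambda$-spaces $V$ and $W$, the direct sum $V \oplus W$ admits a $\ulambda$-structure restricting to the given ones on the two summands, since the ``cross'' components of $\bS_{\ulambda}(V \oplus W)$ can be assigned freely; alternatively, Theorem~\ref{thm:univ}(b) already produces a common finite overspace.

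The amalgamation property is the core technical point, and it is exactly what the shift construction of \S\ref{ss:shift} was set up to handle. Given embeddings $U \hookrightarrow V$ and $U \hookrightarrow W$ of finite $\ulambda$-spaces with $n = \dim U$, pin a basis of $U$. By Proposition~\ref{prop:shift-slice}, $V$ and $W$ lift to objects $(V, \ul{v}, V_0)$ and $(W, \ul{w}, W_0)$ of $\Sh_U(\Str^{\rf}_{\ulambda})$ upon choosing complements to $U$, and by Proposition~\ref{prop:shift-str2} these are equivalent to finite $\unu$-spaces $V_0, W_0$, where $\unu = \sh_n^{\circ}(\ulambda)$ is pure. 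Applying JEP in $\Str^{\rf}_{\unu}$ (established in the previous paragraph) we embed $V_0$ and $W_0$ into a common finite $\unu$-space; transporting back through Propositions~\ref{prop:shift-str2} and~\ref{prop:shift-slice} produces an amalgam of $V$ and $W$ over $U$ in $\Str^{\rf}_{\ulambda}$. With AP in hand, the categorical Fra\"iss\'e theorem immediately yields (a) and (b).

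For (c), universality of $V_{\ulambda} \otimes_k k'$ follows from Theorem~\ref{thm:univ}(c). The substantive point is homogeneity, which is delicate because a finite $k'$-subspace of $V_{\ulambda} \otimes_k k'$ need not be defined over $k$. The plan is to proceed by a back-and-forth argument: any finite $k'$-subspace $W$ of $V_{\ulambda} \otimes_k k'$ is contained in $V_i \otimes_k k'$ for some finite-dimensional $k$-subspace $V_i \subseteq V_{\ulambda}$, so given an isomorphism $f\colon W_1 \to W_2$ between finite $k'$-subspaces one enlarges $V_i$ step by step inside $V_{\ulambda}$, at each stage invoking the $k$-homogeneity of $V_{\ulambda}$ to lift the next piece of $f$ across a $k$-automorphism of $V_{\ulambda}$, then tensoring with $k'$. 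Ideally this is packaged as a base-change compatibility principle for Fra\"iss\'e limits, parallel to Proposition~\ref{A:Phi-univ}. The main obstacle is precisely the homogeneity half of (c): the back-and-forth must respect the $k'$-linear structure while only invoking $k$-homogeneity, and in particular one must arrange at each stage that the $k'$-data encoding the partial automorphism is realized by a $k$-linear $\ulambda$-automorphism of some suitable finite $V_i$.
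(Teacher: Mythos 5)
Your treatment of (a) and (b) is essentially the paper's argument in a different packaging: the paper verifies condition (c) of Theorem~\ref{A:fraisse} (every coslice category $\CS_U(\Str^{\rf}_{\ulambda})$ has a universal ind-object) by composing $\Str^{\rf}_{\unu} \cong \Sh_U(\Str^{\rf}_{\ulambda}) \to \CS_U(\Str^{\rf}_{\ulambda})$ with Theorem~\ref{thm:univ} and Propositions~\ref{prop:shift-str2},~\ref{prop:shift-slice},~\ref{A:Phi-univ}, whereas you verify condition (b) by reducing (AP) over $U$ to (JEP) in $\Str^{\rf}_{\unu}$ via the same shift equivalence; by Proposition~\ref{A:univ} these amount to the same content. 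One caution: \emph{(CC) is not ``essential smallness'' and is not immediate}. Over most fields there are uncountably many isomorphism classes of finite $\ulambda$-spaces, so countable cofinality genuinely requires the $d$-universal finite spaces of Theorem~\ref{thm:univ}(b) (the sequence $\{E_d\}_{d\ge 1}$ is cofinal). You also omit (RCC) entirely; it is needed for the Fra\"iss\'e theorem in the form (b), and is verified by the same shift argument applied to $d$-universal finite $\unu$-spaces. Both fixes use only tools you already invoke, but as written the hypotheses of the Fra\"iss\'e theorem are not established.

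Part (c) contains a genuine gap, and you have correctly located it yourself. The back-and-forth you describe --- enlarging $V_i$ inside $V_{\ulambda}$ and ``invoking the $k$-homogeneity of $V_{\ulambda}$ to lift the next piece of $f$'' --- cannot work as stated: a finite $k'$-subspace of $V_{\ulambda} \otimes_k k'$ need not be of the form $W \otimes_k k'$, and an isomorphism between two finite $k'$-subspaces is in general not induced by any $k$-linear automorphism, so there is no ``piece of $f$'' visible to the $k$-homogeneity of $V_{\ulambda}$. The paper avoids descending anything to $k$: it shows that the base-change functor $\Phi \colon \Str^{\rf}_{\ulambda,k} \to \Str^{\rf}_{\ulambda,k'}$ preserves universal ind-objects in \emph{every} coslice category (via the commutative diagram relating $\Str^{\rf}_{\umu,k} \cong \Sh_U(\Str^{\rf}_{\ulambda,k}) \to \CS_U(\Str^{\rf}_{\ulambda,k})$ to its $k'$-analogue, together with Theorem~\ref{thm:univ}(c) and Proposition~\ref{A:Phi-univ}), and then applies Proposition~\ref{A:Phi-homo} to conclude that $\Phi$ carries the universal homogeneous ind-object to a universal homogeneous ind-object. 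The homogeneity of $V_{\ulambda} \otimes_k k'$ then follows from its f-injectivity as an ind-object of $\Str^{\rf}_{\ulambda,k'}$ (Proposition~\ref{prop:finj}(a)); the back-and-forth happens entirely over $k'$ inside the categorical machinery. If you want to salvage your approach, the correct move is not to realize $k'$-data by $k$-automorphisms but to prove directly that $V_{\ulambda} \otimes_k k'$ is f-injective over $k'$, which again reduces to the coslice-universality statement above.
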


\begin{proof}
Throughout this proof, we identify countable $\ulambda$-spaces with ind-objects in the category $\Str^{\rf}_{\ulambda}$; see \S \ref{ss:ind}. We also note that, since $\ulambda$ is pure, $\cC_{\ulambda}$ has an initial object, namely, the zero space. Thus $\cC_{\ulambda}$ is the coslice category $\Delta_U(\cC_{\ulambda})$ with $U=0$.

(a) By Fra\"iss\'e's theorem (Theorem~\ref{A:fraisse}), it suffices to show that each coslice category $\Delta_U(\cC_{\ulambda}^{\rf})$ has a universal ind-object. Let $U$ be an $n$-dimensional $\ulambda$-space, and choose a pinning (basis) of $U$. We have an equivalence $\Sh_U(\Str_{\ulambda}) \cong \Str_{\umu}$, where $\umu=\sh_n^{\circ}(\ulambda)$, by Proposition~\ref{prop:shift-str2}; this clearly induces an equivalence $\Sh_U(\Str^{\rf}_{\ulambda}) \cong \Str^{\rf}_{\umu}$. By Theorem~\ref{thm:univ}, the category $\Str_{\umu}^{\rf}$ has a universal ind-object. We thus see that the same is true for $\Sh_U(\Str_{\ulambda}^{\rf})$. The forgetful functor $\Sh_U(\Str_{\ulambda}^{\rf}) \to \CS_U(\Str_{\ulambda}^{\rf})$ is essentially surjective (Proposition~\ref{prop:shift-slice}). Thus $\CS_U(\Str_{\ulambda}^{\rf})$ also has a universal ind-object (Proposition~\ref{A:Phi-univ}).

(b) Universal homogeneous ind-objects are always unique up to isomorphism; see Proposition~\ref{prop:finj}(b).

(c) Let $\Phi \colon \Str^{\rf}_{\ulambda,k} \to \Str^{\rf}_{\ulambda,k'}$ be the base change functor $V \mapsto V \otimes_k k'$. To prove the statement, it suffices to show that $\Phi$ preserves universal ind-objects on coslice categories (Proposition~\ref{A:Phi-homo}).

Let $U$ be an $n$-dimensional $\ulambda$-space over $k$, and choose a pinning on $U$. Let $U'=U \otimes_k k'$. We have a commutative (up to isomorphism) diagram
\begin{displaymath}
\xymatrix@C=3em{
\Str^{\rf}_{\umu,k} \ar@{=}[r] \ar[d] &
\Sh_U(\Str^{\rf}_{\ulambda,k}) \ar[r] \ar[d] &
\CS_U(\Str^{\rf}_{\ulambda,k}) \ar[d] \\
\Str^{\rf}_{\umu,k'} \ar@{=}[r] &
\Sh_{U'}(\Str^{\rf}_{\ulambda,k'}) \ar[r] &
\CS_{U'}(\Str^{\rf}_{\ulambda,k'}) }
\end{displaymath}
On the top line the first equivalence comes from Proposition~\ref{prop:shift-str2}, and the second functor is the forgetful functor, which is essentially surjective (Proposition~\ref{prop:shift-slice}); similarly on the second line. The vertical maps are base change maps.

Let $\Omega_1$ be a universal ind-object in $\Sh_U(\Str^{\rf}_{\ulambda,k})$ and let $\Omega_2$, $\Omega_3$, and $\Omega_4$ be its images in the categories $\CS_U(\Str^{\rf}_{\ulambda,k})$, $\Sh_{U'}(\Str^{\rf}_{\ulambda,k'})$, and $\CS_{U'}(\Str^{\rf}_{\ulambda,k'})$. The left vertical functor preserves universal ind-objects by Theorem~\ref{thm:univ}(b), and so the middle one does as well; thus $\Omega_3$ is universal. Since the final functors on each line are essentially surjective, they preserve universal ind-objects (Proposition~\ref{A:Phi-univ}); thus $\Omega_2$ and $\Omega_4$ are universal. We thus see that the right functor maps the universal ind-object $\Omega_2$ to the universal ind-object $\Omega_4$ (up to isomorphism), and so this functor preserves all universal ind-objects (Proposition~\ref{A:Phi-univ}).
\end{proof}

\begin{definition} \label{defn:Vlambda}
For a pure tuple $\ulambda$, we let $V_{\ulambda}$ denote the universal homogeneous countable $\ulambda$-space. It is well-defined up to isomorphism.
\end{definition}

\begin{example} \label{ex:homo-quad}
Suppose that $\ulambda=[(2)]$, so that a $\ulambda$-space is a quadratic space. Recall that the \defn{hyperbolic plane} $H$ is the 2-dimensional quadratic space with form $xy$. We claim that $V_{\ulambda}$ can be taken to be $H^{\oplus \infty}$ (countable orthogonal sum of copies of $H$).

We first show that $H^{\oplus \infty}$ is universal. Let $W$ be a finite dimensional quadratic space. Decompose $W$ into an orthogonal direct sum $W_0 \oplus W_1$ where $W_1$ is the null space of the form and $W_0$ is non-degenerate. It is well-known that $W_0 \oplus W_0' \cong H^{\oplus n}$, where $n=\dim(W_0)$ and $W_0'$ is obtained from $W_0$ by negating the form. Thus $W_0$ embeds into $H^{\oplus n}$. Now, suppose that $W_1$ is $m$-dimensional with basis $e_1, \ldots, e_m$. Then $W_1$ embeds into $H^{\oplus m}$ by mapping $e_i$ to an isotropic vector in the $i$th summand. We thus see that $W$ embeds into $H^{\oplus (n+m)}$, and thus into $H^{\oplus \infty}$. Hence $H^{\oplus \infty}$ is universal. (In fact, this argument shows that $H^{\oplus d}$ is $d$-universal.)

Now suppose that $\phi \colon W \to W'$ is an isometry of finite dimensional subspaces of $H^{\oplus \infty}$. Let $n$ be such that $W$ and $W'$ are contained in $H^{\oplus n}$. By Witt's theorem, $\phi$ extends to an isometry of $H^{\oplus n}$, which in turn extends to an isometry of $H^{\oplus \infty}$. Thus $H^{\oplus \infty}$ is homogeneous. (In the language of this paper, Witt's theorem simply states that a finite dimensional non-degenerate quadratic space is homogeneous.)

We note that $V_{\ulambda}$ admits other descriptions too. For instance, if $-1$ is a square in $k$ then one can take $V_{\ulambda}$ to be the quadratic space with form $\sum_{i \ge 1} x_i^2$.
\end{example}

\begin{example} \label{ex:22}
Suppose $k$ is algebraically closed and $\ulambda=[(2),(2)]$. Let $V=k^{\oplus \infty}$, and represent a $\ulambda$-structure on $V$ by a pair $(A,B)$ of infinite symmetric matrices. Note that all entries in $A$ or $B$ are allowed to be non-zero. One can show that $(A,B)$ is a universal homogeneous structure if and only if the columns of $A$ and $B$, taken all together, are linearly independent.
\end{example}

\begin{remark}
Every countable $\ulambda$-space embeds into $V_{\ulambda}$ by Proposition~\ref{prop:finj}(c). Thus (the $\ulambda$-structure on) $V_{\ulambda}$ belongs to the maximal class in \cite[Theorem~2.9.1]{BDDE}.
\end{remark}

\subsection{Classification} \label{ss:classification}

When considering some class of relational structures, a common problem is to classify the homogeneous structures. For example, \cite{LachlanWoodrow} classified homogeneous graphs. With this in mind, a natural problem is to classify homogeneous (but not necessarily universal) $\ulambda$-spaces.

To begin, we note that non-universal homogeneous $\ulambda$-spaces do exist. Here are some examples:
\begin{itemize}
\item Any finite dimensional non-degenerate quadratic space is homogeneous by Witt's theorem, and obviously not universal.
\item Over the real numbers, the countable quadratic space with form $\sum_{i \ge 1} x_i^2$ is homogeneous (again by Witt's theorem), but not universal (since it is positive definite).
\item Any space equipped with the zero $\ulambda$-structure is homogeneous and not universal.
\item More generally, if $V$ is a homogeneous $\lambda$-space then it is also a homogeneous $[\lambda,\mu]$-space when given the zero $\mu$-structure.
\end{itemize}
From now on, we work over an algebraically closed field and confine our attention to infinite dimensional spaces. It seems plausible that in this case all homogeneous spaces come from taking a universal homogeneous space and padding by zeros, as in the final example above. We have only proved this in the following special case:

\begin{proposition} \label{prop:homo-class}
Suppose $k$ is algebraically closed and $\ulambda=[(d)]$ for some positive integer $d$. Let $V$ be a homogeneous countable $\ulambda$-space with non-zero form $f$. Then $V$ is universal.
\end{proposition}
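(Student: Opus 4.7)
The plan is to show that $V$ contains the universal homogeneous countable $[(d)]$-space $V_{[(d)]}$ constructed in Theorem~\ref{thm:homo}. This immediately gives $\mathrm{age}(V)\supseteq\mathrm{age}(V_{[(d)]})=\{\text{all finite }[(d)]\text{-spaces}\}$, and hence universality of $V$. The embedding $V_{[(d)]}\hookrightarrow V$ will be constructed by back-and-forth along nested finite-dimensional subspaces of $V_{[(d)]}$, reducing everything to the one-point extension property for $V$: for any finite-dimensional $W\subset V$ and any 1-dimensional $[(d)]$-extension $W^+\supset W$, there should exist $v\in V$ realizing $W^+$ as $W+kv$ inside $V$.

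To establish the one-point extension property, I apply the shift equivalence (Propositions~\ref{prop:shift-str2}--\ref{prop:shift-slice}): fix a basis of $W$ and a complement $V'\subset V$ of $W$, so that $V'$ acquires a $\umu$-structure with $\umu=\sh^{\circ}_{\dim W}([(d)])$. Finding a suitable $v$ is then equivalent to finding $v'\in V'$ on which the $\umu$-forms take values prescribed by the $[(d)]$-structure of $W^+$; this is a system of finitely many polynomial equations of degrees $1,\ldots,d$ in $v'$. The non-vanishing of $f$ ensures (after a generic choice of complement) that the top-degree $(d)$-form in the $\umu$-structure on $V'$ remains non-zero, and the homogeneity of $V$ descends to a symmetry property of $V'$ relative to the stabilizer of the pinned $W$ inside $\Aut(V,f)$.

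The main obstacle is verifying joint solvability of this polynomial system. Over the algebraically closed field $k$ with $V'$ infinite-dimensional, each individual equation is solvable because non-constant polynomial functions on $V'$ are surjective onto $k$; but joint solvability is more delicate. My preferred route is to invoke Remark~\ref{rmk:uni-str} together with \cite[Corollary~2.6.3]{BDDE}, identifying universality with $f$ having infinite strength, and argue the contrapositive: if $f$ had finite strength, then canonical algebraic data associated to a strength decomposition (for instance, the irreducible factorization of $f$ in the UFD $\Sym(V^*)$, together with the singular stratification of $\{f=0\}$) would yield $\Aut(V,f)$-invariant decompositions of the null cone distinguishing $[(d)]$-isomorphic 1-dimensional subspaces of $V$, contradicting homogeneity. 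An elementary backup is to induct on $\dim W$ using Theorem~\ref{thm:univ}(b): embed both $W$ and $W^+$ into an auxiliary $d$-universal finite $[(d)]$-space to certify abstract compatibility of the polynomial system, then transport this compatibility to $V$ via the action of $\Aut(V,f)$ provided by homogeneity.
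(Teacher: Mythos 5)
Your preferred route is, in outline, the same as the paper's: argue the contrapositive, assuming $f$ is degenerate (finite strength), produce an $\Aut(V,f)$-invariant subset of the null cone separating two vectors that span isomorphic one-dimensional subspaces, contradict homogeneity, and then conclude via Remark~\ref{rmk:uni-str}. The gap is that you never actually construct this invariant data: the sentence ``canonical algebraic data \dots would yield \dots invariant decompositions \dots distinguishing \dots'' is precisely the content that has to be proved. The paper does it concretely: write $f=\Phi(g_1,\ldots,g_r)$ with $(g_1,\ldots,g_r)$ non-degenerate and $r$ minimal; by \cite[Theorem~9.5]{polygeom} any automorphism of $(V,f)$ preserves the span of the top-degree $g_i$'s; by non-degeneracy (via \cite[Corollary~2.6.3]{BDDE}) the map $v\mapsto(g_1(v),\ldots,g_r(v))$ is surjective onto $k^r$, so one can choose $v$ with all $g_i(v)=0$ and $w$ with $\Phi(\ul{c})=0$ but $c_1\neq 0$; both are isotropic, hence span isomorphic lines, yet no automorphism carries $v$ to $w$. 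Your proposed substitutes do not obviously deliver this separation: the irreducible factorization of $f$ gives nothing when $f$ is irreducible of finite strength, and the singular stratification of $\{f=0\}$ requires exhibiting a \emph{smooth} isotropic vector to contrast with a singular one, which fails exactly when $f$ is non-reduced (e.g.\ $f=g^m$) --- and that is the genuinely delicate case for any argument of this shape. Without the invariance theorem (or some equivalent), the ``contradiction with homogeneity'' is asserted, not derived.

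The elementary backup does not work. Embedding $W$ and $W^+$ into an abstract $d$-universal finite space certifies only that $W^+$ exists as an abstract $\ulambda$-space; it produces no embedding of $W^+$ into $V$ extending the given embedding of $W$. Homogeneity of $V$ only lets you move subspaces that are already realized inside $V$; it cannot import new isomorphism types, so the ``transport via $\Aut(V,f)$'' step is circular. (If it worked, every homogeneous space would be universal, which is false: the zero form, or over $\bR$ the form $\sum_i x_i^2$, gives homogeneous non-universal spaces, so the hypotheses that $f\neq 0$ and that $k$ is algebraically closed must enter essentially.) Likewise, your first reduction --- to joint solvability of the polynomial system on a complement --- is only a restatement of the one-point extension property, which for a homogeneous space is equivalent to universality, and you acknowledge it is unresolved. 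The net effect is that the proposal circles the correct strategy but omits the one step that carries all the weight.
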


\begin{proof}
Suppose $f$ is degenerate in the sense of \cite[\S 9.1]{polygeom}. Write $f=\Phi(g_1, \ldots, g_r)$ where $\Phi$ is a polynomial, $g_i \in (\Sym^{d_i}{V})^*$, and $(g_1, \ldots, g_r)$ is non-degenerate, which is possible by iteratively applying \cite[Proposition~9.1]{polygeom}, and take $r$ minimal. Re-ordering if necessary, suppose $d_1 \ge d_2 \ge \cdots \ge d_r$, and let $s$ be such that $d_1=\cdots=d_s$ and $d_s \ne d_{s+1}$. If $\sigma$ is an automorphism of $V$ then $\sigma$ preserves the $k$-span of $g_1, \ldots, g_s$ by \cite[Theorem~9.5]{polygeom}.

Now, for any $c_1, \ldots, c_r \in k$ we can find a non-zero $v \in V$ such that $g_i(v)=c_i$ for $1 \le i \le r$. This follows from \cite[Corollary~2.6.3]{BDDE} (see also \cite[\S 2]{isocubic}). Let $v$ be a non-zero vector such that $g_i(v)=0$ for all $i$. Let $c_1, \ldots, c_r \in k$ be such that $\Phi(c_1, \ldots, c_r)=0$ but $c_1 \ne 0$, and let $w \in V$ be such that $g_i(w)=c_i$ for all $1 \le i \le r$. Thus $f(v)=f(w)=0$, and so $v$ and $w$ generate isomorphic 1-dimensional $\ulambda$-subspaces of $V$. However, there is no automorphism of $V$ moving $v$ to $w$, since $\Aut(V)$ preserves the locus $g_1=\cdots=g_s=0$. Thus $V$ is not homogeneous, a contradiction.

The above argument shows that $f$ is non-degenerate, and so $V$ is universal by Remark~\ref{rmk:uni-str}.
\end{proof}

\begin{remark}
When $d=2$ the above proof shows that a quadratic space of infinite dimension but finite non-zero rank is not homogeneous. The key point is that there are two types of isotropic vectors: those in the null space and those not in the null space.
\end{remark}

%\begin{remark}
%Let $\bR$ be the graded inverse limit of $k[x_1, \ldots, x_n]$ as $n \to \infty$. In \cite[Theorem~1.1]{ess}, it is shown that $\bR$ is abstractly a polynomial ring. The $d$th graded piece of $\bR$ can be identified with the space $(\Sym^d{V})^*$ used above. With this identification, ``degenerate'' is equivalent to ``finite strength,'' and the $g_1, \ldots, g_r$ are ``variables'' in the description of $\bR$ as a polynomial ring.
%\end{remark}

\section{Linear-oligomorphic groups}

\subsection{Linear-oligomorphic groups} \label{ss:lin-olig}

Recall that an \emph{oligomorphic group} is a group $G$ with a faithful action on a set $\Omega$ such that $G$ has finitely many orbits on $\Omega^n$ for all $n \ge 0$. The most basic example is the infinite symmetric group, i.e., take $\Omega$ to be an infinite set and $G$ the group of all permutations of $\Omega$. There is an intimate connection between oligomorphic groups and homogeneous structures; see \cite{Cameron} or \cite{Macpherson} for general background.

We aim to extend this story to the linear case. To this end, we introduce the following concept:

\begin{definition} \label{defn:olig}
Let $V$ be a vector space and let $G$ be a subgroup of $\GL(V)$. We say that $G$ is \emph{linear-oligomorphic} if for any $d \ge 0$ there exists a finite dimensional subspace $E$ of $V$ such that if $W$ is an $d$-dimensional subspace of $V$ then there exists $g \in G$ such that $gW \subset E$.
\end{definition}

Note that a permutation group $(G, \Omega)$ is oligomorphic if and only if the following condition holds: for every $n \ge 0$ there is a finite subset $S$ of $\Omega$ such that if $T$ is any $n$-element subset of $\Omega$ then there is some $g \in G$ such that $gT \subset S$. The above definition simply reformulates this in linear terms.

\begin{example}
The group $\GL(V)$ is clearly linear-oligomorphic. It is also easy to see directly that the split infinite orthogonal group and the infinite symplectic group are linear-oligomorphic.
\end{example}

\begin{remark}
Definition~\ref{defn:olig} is not intended to be definitive: e.g., one might also want $G$ to be a closed ind-subscheme of $\GL(V)$, and for the defining property to hold after passing to extensions of $k$. However, for the purposes of this paper, Definition~\ref{defn:olig} will suffice.
\end{remark}

\begin{remark} \label{rmk:unite}
Consider an oligomorphic group $G$ acting on the set $\Omega$. We can linearize $\Omega$ to obtain a permutation representation $V=k[\Omega]$ of $G$. It is not necessarily true that $G$ is a linear-oligomorphic subgroup of $\GL(V)$. For instance, if $G$ is the symmetric group on the infinite set $\Omega=\bN$ then $G$ is not linear-oligomorphic: indeed, if $L_m$ is the 1-dimensional subspace of $k[\Omega]$ spanned by $\sum_{i=1}^m e_i$ then any finite dimensional subspace of $V$ contains conjugates of only finitely many of the $L_m$'s.

We would like to have a class of groups (``algebraic-oligomorphic groups'') that includes both oligomorphic and linear-oligomorphic groups. One reason for this is for applications to tensor categories, as described in \S \ref{ss:deligne}. Here is a candidate. Consider a group ind-scheme $G$ equipped with a class $\sU$ of closed subgroups such that the following conditions hold:
\begin{enumerate}
\item $\sU$ is closed under conjugation and finite intersections.
\item We have $\bigcap_{U \in \sU} U=1$.
\item Given $U,V \in \sU$ there is a closed subscheme $X$ of $G$ that is of finite type over $k$ such that the map $U \times X \times V \to G$ given by $(u,x,v) \mapsto uxv$ is surjective. \qedhere
\end{enumerate}
An oligomorphic group $G$ satisfies the above definition: regard $G$ as a discrete ind-scheme, and take $\sU$ to be the groups defining the admissible topology discussed in \cite[\S 2.2]{repst}. Now let $G \subset \GL(V)$ be linear-oligomorphic, and suppose that $G$ is Zariski closed (and thus a group ind-scheme). Then $G$ satisfies the above conditions, by taking $\sU$ to be the subgroups that pointwise fix some finite dimensional subspace of $V$.
\end{remark}

\subsection{Automorphism groups}

Fix a pure tuple $\ulambda$. We now introduce the following important object:

\begin{definition} \label{defn:Glambda}
We let $G_{\ulambda}$ be the automorphism group of the $\ulambda$-space $V_{\ulambda}$.
\end{definition}

\begin{example}
We give some examples of the $G_{\ulambda}$ groups:
\begin{enumerate}
\item $G_{\emptyset}$ is the infinite general linear group, where $\emptyset$ is the empty tuple.
\item $G_{[(1)]}$ is the stabilizer in $\GL(V_{[1]})$ of a non-zero linear functional.
\item $G_{[(2)]}$ is the split infinite orthogonal group (see Example~\ref{ex:homo-quad}).
\item $G_{[(1,1)]}$ is the infinite symplectic group.
\item $G_{[(2),(2)]}$ is the intersection of two generic conjugates of the infinite orthogonal group (the stabilizers of $A$ and $B$ in Example~\ref{ex:22}, if $k$ is algebraically closed).
\item $G_{[(3)]}$ does not seem to be closely related to any familiar groups. \qedhere
\end{enumerate}
\end{example}

The following theorem shows that the group $G_{\ulambda}$ is reasonably large:

\begin{theorem} \label{thm:olig}
The group $G_{\ulambda}$ is linear-oligomorphic.
\end{theorem}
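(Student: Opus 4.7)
The plan is to unpack the definition of linear-oligomorphic and reduce directly to the two ingredients already established: the existence of finite $d$-universal $\ulambda$-spaces (Theorem~\ref{thm:univ}(b)) and the universality plus homogeneity of $V_{\ulambda}$ (Theorem~\ref{thm:homo}).

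Fix $d \ge 0$. First I would invoke Theorem~\ref{thm:univ}(b) to obtain a finite $\ulambda$-space $U$ which is $d$-universal, i.e., every $d$-dimensional $\ulambda$-space embeds into $U$. Next, since $V_{\ulambda}$ is universal, I would fix an embedding of $\ulambda$-spaces $j\colon U \hookrightarrow V_{\ulambda}$ and set $E = j(U) \subset V_{\ulambda}$. This $E$ is a finite dimensional subspace of $V_{\ulambda}$ (with the induced $\ulambda$-structure isomorphic to $U$), and is the candidate container in Definition~\ref{defn:olig}.

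Now let $W \subset V_{\ulambda}$ be an arbitrary subspace of dimension at most $d$, equipped with the induced $\ulambda$-structure. By the $d$-universality of $U$, there is an embedding of $\ulambda$-spaces $\iota \colon W \hookrightarrow U$; composing with $j$ gives an embedding $\phi = j \circ \iota \colon W \hookrightarrow V_{\ulambda}$ whose image $\phi(W)$ lies in $E$. Since $\phi$ is an isomorphism of $\ulambda$-spaces from the finite dimensional subspace $W \subset V_{\ulambda}$ onto the finite dimensional subspace $\phi(W) \subset V_{\ulambda}$, the homogeneity of $V_{\ulambda}$ produces an element $g \in G_{\ulambda}$ with $g|_W = \phi$. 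Then $gW = \phi(W) \subset E$, which is exactly the property required by Definition~\ref{defn:olig}.

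There is no real obstacle here; the work has all been done upstream. The only subtlety worth checking carefully is that the $\ulambda$-structure that $E$ inherits as a subspace of $V_{\ulambda}$ really does coincide with its structure as a copy of $U$ (so that $d$-universality of $U$ can be used to land inside $E$). But this is immediate from the definition of embedding of $\ulambda$-spaces, since $j$ was chosen to be such an embedding.
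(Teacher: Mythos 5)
Your proof is correct and follows essentially the same route as the paper: choose a finite $d$-universal space, embed it into $V_{\ulambda}$ to get the container $E$, use $d$-universality to re-embed an arbitrary $d$-dimensional $W$ into $E$, and then invoke homogeneity to realize that re-embedding by an element of $G_{\ulambda}$. The only cosmetic difference is that the paper treats $E$ directly as a subspace and phrases the homogeneity step as $g \circ i = j$ for the inclusion $i$, which is the same argument.
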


\begin{proof}
Let $d \ge 0$ be given. Let $E$ be a $d$-universal finite $\ulambda$-space, which exists by Theorem~\ref{thm:univ}. Fix an embedding $E \to V_{\ulambda}$, which exists because $V_{\ulambda}$ is universal, and treat $E$ as a subspace of $V_{\ulambda}$. Now suppose that $W$ is an $d$-dimensional subspace of $V_{\ulambda}$, and let $i \colon W \to V_{\ulambda}$ denote the inclusion. Since $E$ is $d$-universal, there is an embedding $j \colon W \to E$. Since $V_{\ulambda}$ is homogeneous, there is $g \in G_{\ulambda}$ such that $g \circ i = j$. It follows that $gW \subset E$, and so $G_{\ulambda}$ is linear-oligomorphic.
\end{proof}

Given a group $G$ acting on a set $X$ and a tuple $\ul{x}=(x_1, \ldots, x_m)$ in $X^m$, we let $G(\ul{x})$ be the subgroup of $G$ consisting of elements that fix each $x_i$. We will require the following strengthening of the above theorem when we prove quantifier elimination in \S \ref{ss:quant}.

\begin{proposition} \label{prop:olig2}
For any $\ul{u} \in V_{\ulambda}^m$ the group $G_{\ulambda}(\ul{u})$ is linear-oligomorphic.
\end{proposition}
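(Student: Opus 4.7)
The plan is to reduce the statement to an application of Theorem~\ref{thm:olig} for a smaller tuple, by passing to a complement of the finite-dimensional subspace $\ul{u}$ spans. Let $U = \operatorname{span}(u_1,\ldots,u_m) \subseteq V_{\ulambda}$ with its induced $\ulambda$-structure, and set $n = \dim U$. Since the $u_i$ generate $U$, we have $G_{\ulambda}(\ul{u}) = \{g \in G_{\ulambda} : g|_U = \id_U\}$, so enlarging $\ul{u}$ if necessary I may assume it is a basis of $U$, i.e.\ a pinning. Fix a vector-space complement $V' \subseteq V_{\ulambda}$ to $U$, and let $\pi \colon V_{\ulambda} \to V'$ be the projection along $U$.

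The first step is to identify $V'$, with its induced structure, as a copy of a smaller universal homogeneous space. Proposition~\ref{prop:shift-str2} equips $V'$ with a natural $\unu$-structure, where $\unu = \sh^{\circ}_n(\ulambda)$, so that the triple $(V_{\ulambda}, \ul{u}, V')$ corresponds to $V'$ under the equivalence $\Sh_U(\Str_{\ulambda}) \cong \Str_{\unu}$. I claim $V' \cong V_{\unu}$. Since the forgetful functor $\Sh_U(\Str_{\ulambda}^{\rf}) \to \CS_U(\Str_{\ulambda}^{\rf})$ is essentially surjective (Proposition~\ref{prop:shift-slice}) and universal homogeneous ind-objects are unique (Proposition~\ref{prop:finj}(b)), the argument in the proof of Theorem~\ref{thm:homo} shows that the universal homogeneous ind-object of the coslice, namely $V_{\ulambda}$ with the inclusion of $U$, lifts to a universal homogeneous ind-object of $\Sh_U(\Str_{\ulambda}^{\rf})$ realised by $(V_{\ulambda}, \ul{u}, V')$ itself; transporting under the equivalence gives $V' \cong V_{\unu}$ as $\unu$-spaces.

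Next I would produce a large subgroup of $G_{\ulambda}(\ul{u})$ that acts on $V'$ as $G_{\unu}$. Let $H \subseteq G_{\ulambda}(\ul{u})$ be the subgroup of elements preserving $V'$ setwise. Any $\phi \in G_{\unu} = \Aut_{\Str_{\unu}}(V')$ lifts via the equivalence of Proposition~\ref{prop:shift-str2} to an automorphism of the triple $(V_{\ulambda}, \ul{u}, V')$ in $\Sh_U(\Str_{\ulambda})$, and hence to an element of $H$ restricting to $\phi$ on $V'$; thus restriction defines a surjective homomorphism $H \to G_{\unu}$.

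Finally I would translate an arbitrary finite-dimensional subspace into a fixed target. Given $d \ge 0$, applying Theorem~\ref{thm:olig} to $G_{\unu}$ acting on $V_{\unu} \cong V'$ produces a finite-dimensional $E' \subseteq V'$ such that every $d$-dimensional subspace of $V'$ can be translated into $E'$ by some element of $G_{\unu}$. Set $E = U + E'$, which is finite-dimensional in $V_{\ulambda}$. Given any $d$-dimensional $W \subseteq V_{\ulambda}$, the projection $\pi(W) \subseteq V'$ has dimension at most $d$; choose $h \in H$ with $h\,\pi(W) \subseteq E'$. Because $h$ fixes $U$ pointwise and preserves $V'$, it commutes with $\pi$, so $\pi(hW) = h\,\pi(W) \subseteq E'$ and hence $hW \subseteq \pi^{-1}(E') = E$, proving linear-oligomorphicity of $G_{\ulambda}(\ul{u})$. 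The main obstacle is the identification $V' \cong V_{\unu}$, since one must verify that the universal homogeneous structure on the coslice is realised by \emph{every} choice of complement $V'$, not just some abstract one; once this is in hand, the projection argument is routine.
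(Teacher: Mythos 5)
Your reduction has a genuine gap at exactly the point you flag as ``the main obstacle,'' and the obstacle is not merely unverified --- the statement you need is false for an arbitrary complement. You fix \emph{any} vector-space complement $V'$ of $U$ and claim $V'$, with its induced $\unu$-structure, is isomorphic to $V_{\unu}$. Take $\ulambda=[(2)]$, so $V_{\ulambda}=H^{\oplus\infty}$ with hyperbolic basis $e_1,f_1,e_2,f_2,\ldots$, and take $\ul{u}=(e_1)$, $U=ke_1$, $\unu=[(2),(1)]$. For the complement $V'=\operatorname{span}(f_1,e_2,f_2,\ldots)$, the induced structure is the restricted bilinear form together with the linear form $\ell=\omega(e_1,-)$. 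The vectors $f_1$ and $f_1+e_2$ generate isomorphic one-dimensional $\unu$-subspaces of $V'$ (both isotropic with $\ell=1$ on the generator), but $f_1$ spans the radical of $\omega|_{V'}$ while $f_1+e_2$ pairs nontrivially with $f_2$; since any automorphism of $(V',\omega|_{V'},\ell)$ preserves the radical, no automorphism carries one to the other. So this $V'$ is not homogeneous, hence not $V_{\unu}$, and your surjection $H\to G_{\unu}$ and the subsequent appeal to Theorem~\ref{thm:olig} for $G_{\unu}$ acting on $V'$ are not available for a general complement. (Different complements genuinely give non-isomorphic $\unu$-structures, so this cannot be waved away.)

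The argument can be repaired: what is true is that \emph{some} complement works. The universal homogeneous ind-object of $\Sh_U(\Str^{\rf}_{\ulambda})\cong\Str^{\rf}_{\unu}$ maps, under the forgetful functor, to a universal homogeneous ind-object of $\CS_U(\Str^{\rf}_{\ulambda})$ (one checks (EP) and (REP) and applies Proposition~\ref{A:Phi-homo}, or reruns the argument of Proposition~\ref{A:coslice}); by uniqueness in the coslice category its underlying object is $(V_{\ulambda},\ul{u})$, and transporting the distinguished complement along that isomorphism produces a specific $V'$ with $(V_{\ulambda},\ul{u},V')$ universal homogeneous in $\Sh_U$, i.e.\ $V'\cong V_{\unu}$. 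With that choice your projection argument goes through (note only that $\pi(W)$ may have dimension $<d$, which is harmless). The paper's proof sidesteps all of this: it never fixes a global complement, but instead takes a finite $d$-universal $\umu$-space $E'$, realizes $(E,\ul{u},E')$ inside $V_{\ulambda}$, and for each $W$ chooses the complement $W'$ of $U$ in $U+W$ afresh, embedding $(U+W',\ul{u},W')$ into $(E,\ul{u},E')$ by $d$-universality and extending by homogeneity of $V_{\ulambda}$ itself. That route needs only the finite-level shift equivalence and avoids the delicate question of which infinite complements are universal homogeneous.
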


\begin{proof}
Fix $d \ge 0$. Let $U$ be the span of $u_1, \ldots, u_m$, regarded as a pinned $\ulambda$-space. Recall (Proposition~\ref{prop:shift-str2}) that we have an equivalence $\Sh_U(\cC_{\ulambda})=\cC_{\umu}$, where $\umu=\sh_m^{\circ}(\ulambda)$, given by $(V, \ul{v}, V') \mapsto V'$. Let $E'$ be an $d$-universal finite $\umu$-space, which exists by Theorem~\ref{thm:univ}, and let $(E, \ul{e}, E')$ be the corresponding object of $\Sh_U(\cC_{\ulambda})$. Fix an embedding $E \to V_{\ulambda}$ of $\ulambda$-spaces, and identify $E$ with a subspace of $V_{\ulambda}$ in what follows. Since $V_{\ulambda}$ is homogeneous, there is $g \in G_{\ulambda}$ such that $g \ul{e} = \ul{u}$. Replacing $E$ with $gE$, we assume $\ul{e}=\ul{u}$.

Now let $W'$ be a $d$-dimensional subspace of $V_{\ulambda}$ such that $U \cap W'=0$, and let $W=U+W'$. We let $i \colon W \to V_{\ulambda}$ be the inclusion, and we regard $(W, \ul{u}, W')$ as an object of $\Sh_U(\cC_{\ulambda})$. Since $E'$ is $d$-universal, there is an embedding $W' \to E'$ of $\umu$-spaces, which translates to an embedding $(W, \ul{u}, W') \to (E, \ul{u}, E')$ in $\Sh_U(\cC_{\ulambda})$. In other words, we have an embedding $j \colon W \to V_{\ulambda}$ such that $j(\ul{u})=\ul{u}$ and $j(W) \subset E$. Since $V_{\ulambda}$ is homogeneous, there is $g \in G_{\ulambda}$ such that $j=g \circ i$. This shows that there is $g \in G_{\ulambda}(\ul{u})$ such that $g W' \in E'$.

Changing notation slightly, we have thus proved the following: there is a finite dimensional subspace $E$ of $V_{\ulambda}$ such that if $W$ is a $d$-dimensional subspace of $V_{\ulambda}$ with $W \cap U=0$ then there is $g \in G_{\ulambda}(\ul{u})$ such that $gW \subset E$. Of course, we are free to enlarge $E$, and so we may as well suppose $U \subset E$. Suppose now that $W$ is an arbitrary $d$-dimensional subspace of $V_{\ulambda}$. We have $W \subset U+W'$ for some $d$-dimensional subspace $W'$ with $U \cap W'=0$. Letting $g \in G_{\ulambda}(\ul{u})$ be such that $gW' \subset E$, we have $gW \subset E$. Thus $G_{\ulambda}(\ul{u})$ is linear-oligomorphic.
\end{proof}

\begin{remark}
There are other examples of linear-oligomorphic groups. For instance, if $k$ is algebraically closed then the automorphism group of any countable quadratic space is linear-oligomorphic. More generally, we show in \cite{homoten2} that the automorphism group of any weakly homogeneous $\ulambda$-space is linear-oligomorphic.
\end{remark}

\subsection{Representation theory}

Let $G$ be a subgroup of $\GL(V)$. We can then regard $V$ as a representation of $G$; we call it the \emph{standard representation}. We say that a representation of $G$ is \emph{polynomial} if it occurs as a subquotient of a (possibly infinite) direct sum of tensor powers of the standard representation. We let $\Rep^{\pol}(G)$ be the category of polynomial representation of $G$. It is a Grothendieck abelian category and is closed under tensor product. We believe that studying this category for linear-oligomorphic groups should be an interesting problem.

In fact, the paper \cite{tcares} essentially solves this problem when $G=G_{\ulambda}$. The following theorem describes the most important points. This generalizes results from \cite{koszulcategory, penkovserganova, penkovstyrkas, infrank} on infinite rank classical groups.

\begin{theorem} \label{thm:rep}
Let $\ulambda$ be a pure tuple. In what follows, we consider representations of $G_{\ulambda}$.
\begin{enumerate}
\item The representation $V_{\ulambda}^{\otimes n}$ is of finite length, for any $n \ge 0$.
\item For a partition $\mu$, the socle $L_{\mu}$ of $\bS_{\mu}(V_{\ulambda})$ is irreducible; every irreducible polynomial representation is isomorphic to $L_{\mu}$ for a unique $\mu$.
\item The representation $\bS_{\mu}(V_{\ulambda})$ is injective in the category $\Rep^{\pol}(G_{\ulambda})$.
\item Every finite length polynomial representation has finite injective dimension.
\end{enumerate}
\end{theorem}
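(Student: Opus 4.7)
The plan is to reduce each of the four statements to results from \cite{tcares} via an equivalence of categories. First, I would identify $G_{\ulambda}$ as a ``generalized stabilizer'' in the sense of \cite{tcares}: by Remark~\ref{rmk:uni-str}, the $\ulambda$-structure $\uomega$ on $V_{\ulambda}$ is a $\GL$-generic point of the scheme $\bA^{\ulambda}$ (since $V_{\ulambda}$ is universal), and $G_{\ulambda}$ is exactly the stabilizer of $\uomega$ in $\GL(V_{\ulambda})$. The linear-oligomorphic property (Theorem~\ref{thm:olig}) together with the homogeneity of $V_{\ulambda}$ should translate into the finiteness/genericity hypotheses needed to apply the machinery of \cite{tcares} to $G_{\ulambda}$.

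Next, I would construct an equivalence (or at least an exact comparison functor with the right properties) between $\Rep^{\pol}(G_{\ulambda})$ and the polynomial representation category studied in \cite{tcares}. The natural functor sends a polynomial functor $F$ to the $G_{\ulambda}$-representation $F(V_{\ulambda})$, and the content is that this functor is fully faithful and essentially surjective onto $\Rep^{\pol}(G_{\ulambda})$. One controls Hom spaces here by writing $V_{\ulambda}$ as an ascending union of finite dimensional subspaces and using homogeneity to transport morphisms, analogously to how one sets up the polynomial representation theory of infinite classical groups in \cite{penkovserganova, penkovstyrkas, infrank, koszulcategory}; that both sides are generated under colimits and subquotients by $V_{\ulambda}^{\otimes n}$ makes the comparison natural.

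Once this dictionary is in place, the four assertions become translations of known facts. Part (a) uses the semisimple decomposition $V_{\ulambda}^{\otimes n}=\bigoplus_{\lambda\vdash n}S^{\lambda}\otimes\bS_{\lambda}(V_{\ulambda})$ together with finite length of each Schur summand on the \cite{tcares} side. Part (b) classifies simple polynomial representations by partitions, with $L_{\mu}$ realized as the socle of $\bS_{\mu}(V_{\ulambda})$, paralleling the classical picture. Part (c) is the central injectivity of the $\bS_{\mu}(V_{\ulambda})$ in $\Rep^{\pol}(G_{\ulambda})$, and part (d) then follows formally by resolving any finite length polynomial representation using the injective generators $\bS_{\mu}(V_{\ulambda})$. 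The main obstacle is the first two steps: precisely identifying $G_{\ulambda}$ with a generalized stabilizer covered by \cite{tcares} and verifying that the two notions of ``polynomial representation'' agree --- once this comparison is established, parts (a)--(d) are essentially formal consequences of the results proved there.
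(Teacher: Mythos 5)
Your overall strategy---reduce everything to the results of \cite{tcares}---is the same as the paper's, but the bridge you propose to that paper has a genuine gap. First, $G_{\ulambda}$ is \emph{not} a generalized stabilizer in the sense of \cite{tcares}: the generalized stabilizer $\Gamma_{\uomega}$ is the system of \emph{sets} $\Gamma_{\uomega}(n)$ consisting of all $g\in\GL(V_{\ulambda})$ such that $g^{-1}\uomega$ and $\uomega$ agree on $V_n$; these sets are typically not groups, and each contains the honest stabilizer $G_{\ulambda}$, usually properly. The results of \cite{tcares} are proved for $\Rep^{\pol}(\Gamma_{\uomega})$, not for $\Rep^{\pol}(G_{\ulambda})$, and the entire mathematical content of the proof is closing this gap: homogeneity of $V_{\ulambda}$ implies that $G_{\ulambda}$ is \emph{dense} in $\Gamma_{\uomega}$, in the sense that every $g\in\Gamma_{\uomega}(n)$ agrees on $V_n$ with an honest automorphism (Lemma~\ref{lem:rep1}), and this density forces the restriction functor $\Rep^{\pol}(\Gamma_{\uomega})\to\Rep^{\pol}(G_{\ulambda})$ to be an equivalence (Lemma~\ref{lem:rep2}). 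Your proposal never isolates this density statement, which is precisely where homogeneity enters; the $\GL$-genericity and linear-oligomorphy you invoke are not what makes the reduction work.

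Second, the comparison functor you name cannot be the right one. The assignment $F\mapsto F(V_{\ulambda})$ is a functor out of the category of polynomial \emph{functors}, which is semisimple; if it were fully faithful and essentially surjective onto $\Rep^{\pol}(G_{\ulambda})$, that category would be semisimple too, contradicting the very statement you are proving: parts (b)--(d) are non-trivial only because $\bS_{\mu}(V_{\ulambda})$ is injective but generally \emph{not} irreducible (its socle $L_{\mu}$ is a proper subrepresentation, and finite injective dimension would be automatic in a semisimple category). Concretely, $\Rep^{\pol}(G_{\ulambda})$ is closed under subquotients by definition, and $L_{\mu}$ is not of the form $F(V_{\ulambda})$, so essential surjectivity fails. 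The correct comparison is restriction along the dense inclusion $G_{\ulambda}\subset\Gamma_{\uomega}$ described above; once that equivalence is established, parts (a)--(d) do follow from \cite{tcares} essentially as you indicate.
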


Before proving the theorem, we recall some material from \cite{tcares}. Let $\ul{\omega}$ be a $\ulambda$-structure on a countable vector space $V$. Write $V=\bigcup_{n \ge 1} V_n$ with each $V_n$ finite. Define $\Gamma_{\ul{\omega}}(n)$ to be the set of all $g \in \GL(V)$ such that $g^{-1} \ul{\omega}$ and $\ul{\omega}$ have the same restriction to $V_n$. This is typically just a set, and not a subgroup. The \defn{generalized stabilizer} of $\ul{\omega}$ is the system $\Gamma_{\ul{\omega}}=\{\Gamma_{\ul{\omega}}(n)\}_{n \ge 1}$. One of the main ideas of \cite{tcares} is that, while the stabilizer of $\ul{\omega}$ is often ``too small,'' the generalized stabilizer is always large enough.

A \emph{pre-representation} of $\Gamma_{\ul{\omega}}$ is a vector space $W$ equipped with a partially defined action map $\Gamma_{\ul{\omega}} \times W \dashrightarrow W$. A little more precisely, for each $w \in W$ there must exist $n \ge 1$ such that the action of $g$ on $w$ is defined for all $g \in \Gamma_{\ul{\omega}}(n)$. A \emph{representation} of $\Gamma_{\ul{\omega}}$ is a pre-representation satisfying some natural conditions. We refer to \cite[\S 7.2]{tcares} for the exact definitions. Since each $\Gamma_{\ul{\omega}}(n)$ is contained in $\GL(V)$, there is a natural pre-representation of $\Gamma_{\ul{\omega}}$ on $V$, which is a representation. A representation of $\Gamma_{\ul{\omega}}$ is \emph{polynomial} if it occurs as a subquotient of a direct sum of tensor powers of $V$. We let $\Rep^{\pol}(\Gamma_{\ul{\omega}})$ denote the category of polynomial representations.

Let $G_{\ul{\omega}}$ be the stabilizer of $\ul{\omega}$ in $\GL(V)$. We have $G_{\ul{\omega}} \subset \Gamma_{\ul{\omega}}(n)$ for all $n \ge 1$. In particular, if $W$ is a representation of $\Gamma_{\ul{\omega}}$ then there is well-defined action map $G_{\ul{\omega}} \times W \to W$, which is easily seen to define a representation of $G_{\ul{\omega}}$ on $W$; we call this the \defn{restriction} of $W$ to $G_{\ul{\omega}}$. It is clear that if $W$ is a polynomial representation of $\Gamma_{\ul{\omega}}$ then its restriction is a polynomial representation of $G_{\ul{\omega}}$. We thus have a restriction functor
\begin{equation} \label{eq:res}
\res \colon \Rep^{\pol}(\Gamma_{\ul{\omega}}) \to \Rep^{\pol}(G_{\ul{\omega}})
\end{equation}
The following two lemmas are the key results needed for Theorem~\ref{thm:rep}:

\begin{lemma} \label{lem:rep1}
Suppose $(V, \ul{\omega})$ is homogeneous. Then $G_{\ul{\omega}}$ is dense in $\Gamma_{\ul{\omega}}$ in the following sense: given $g \in \Gamma_{\ul{\omega}}(n)$ there exists $h \in G_{\ul{\omega}}$ such that $g$ and $h$ have the same restriction to $V_n$.
\end{lemma}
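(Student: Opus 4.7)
The claim should be essentially a direct translation of the definition of homogeneity, once one recognizes $\Gamma_{\ul\omega}(n)$ correctly.

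First I would unpack the hypothesis. By the definition recalled just above the lemma, $g \in \Gamma_{\ul\omega}(n)$ means $g \in \GL(V)$ and $g^{-1}\ul\omega$ and $\ul\omega$ restrict to the same $\ulambda$-structure on $V_n$. Since pulling back a multilinear form along $g$ corresponds to evaluating at $g v_1, \ldots, g v_k$, this condition says precisely that the linear map $g|_{V_n} \colon V_n \to V$ pulls the $\ulambda$-structure on $V$ (restricted to $g(V_n)$) back to the $\ulambda$-structure on $V$ (restricted to $V_n$). In other words, $g|_{V_n}$ is an embedding of $\ulambda$-spaces, and hence defines an isomorphism of $\ulambda$-spaces $V_n \xrightarrow{\sim} g(V_n)$ between two finite-dimensional $\ulambda$-subspaces of $V$.

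Next I would invoke homogeneity of $(V, \ul\omega)$. By definition, any isomorphism between two finite-dimensional subspaces of $V$ extends to an element of $\Aut(V, \ul\omega) = G_{\ul\omega}$. Applying this to the isomorphism $g|_{V_n} \colon V_n \to g(V_n)$ produces an $h \in G_{\ul\omega}$ with $h|_{V_n} = g|_{V_n}$, which is exactly the desired conclusion.

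There is essentially no obstacle here: once one parses the definition of $\Gamma_{\ul\omega}(n)$ as ``elements of $\GL(V)$ whose restriction to $V_n$ is an embedding of $\ulambda$-spaces,'' the lemma is an immediate application of the homogeneity hypothesis. The only potentially confusing point is the convention for pullback of forms, but this is only bookkeeping.
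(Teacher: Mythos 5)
Your proof is correct and is essentially identical to the paper's: both unpack the condition $g \in \Gamma_{\ul\omega}(n)$ as saying that $g|_{V_n} \colon V_n \to gV_n$ is an isomorphism of $\ulambda$-subspaces of $V$, and then apply homogeneity to extend it to some $h \in G_{\ul\omega}$. The extra care you take with the pullback convention is fine bookkeeping but does not change the argument.
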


\begin{proof}
Let $g$ be as in the statement of the lemma. Then $g \colon (V_n, \ul{\omega}) \to (gV_n, \ul{\omega})$ is an isomorphism of $\ulambda$-spaces. By homogeneity, there is $h \in G_{\ul{\omega}}$ such that $g$ and $h$ have equal restriction to $V_n$.
\end{proof}

\begin{lemma} \label{lem:rep2}
Suppose $(V, \ul{\omega})$ is homogeneous. Then \eqref{eq:res} is an equivalence.
\end{lemma}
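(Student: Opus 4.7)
The plan is to leverage Lemma~\ref{lem:rep1} as the only real input; both full faithfulness and essential surjectivity of $\res$ will reduce to a standard density argument, using the fact that by definition a polynomial representation of $\Gamma_{\ul{\omega}}$ is ``locally'' controlled by the behavior of $\Gamma_{\ul{\omega}}(n)$ on $V_n$ for some $n$ depending on the vector. The restriction functor is well-defined on all of $\Rep^{\pol}(\Gamma_{\ul{\omega}})$ by construction, so the remaining task is to check it is fully faithful and essentially surjective.

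For full faithfulness, I would start with polynomial representations $W_1, W_2$ of $\Gamma_{\ul{\omega}}$ and a $G_{\ul{\omega}}$-equivariant map $\phi\colon W_1 \to W_2$, and show that $\phi$ automatically commutes with the $\Gamma_{\ul{\omega}}$-action. Given $w \in W_1$ and $g \in \Gamma_{\ul{\omega}}(n)$ such that the actions of $\Gamma_{\ul{\omega}}(n)$ on $w$ and $\phi(w)$ are both defined, the key structural property of a representation of $\Gamma_{\ul{\omega}}$ (from \cite[\S 7.2]{tcares}) is that the action depends only on the restriction of $g$ to $V_n$. By Lemma~\ref{lem:rep1}, there is some $h \in G_{\ul{\omega}}$ agreeing with $g$ on $V_n$, so $gw = hw$ and $g\phi(w) = h\phi(w)$, giving $\phi(gw) = \phi(hw) = h\phi(w) = g\phi(w)$. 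For injectivity at the level of morphism sets, the same argument run in reverse shows that a $G_{\ul{\omega}}$-equivariant map is zero iff it is zero as a $\Gamma_{\ul{\omega}}$-equivariant map, which is immediate.

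For essential surjectivity, every object of $\Rep^{\pol}(G_{\ul{\omega}})$ is a subquotient of a direct sum $T$ of tensor powers of $V$. Such a $T$ is canonically a polynomial representation of $\Gamma_{\ul{\omega}}$, so it suffices to show that every $G_{\ul{\omega}}$-subrepresentation $U \subseteq T$ is automatically $\Gamma_{\ul{\omega}}$-stable (then quotients inherit the $\Gamma_{\ul{\omega}}$-action uniquely, and the preimage of this subquotient under $\res$ produces the desired object). For $u \in U$, pick $n$ so that $\Gamma_{\ul{\omega}}(n)$ acts on $u$; for $g \in \Gamma_{\ul{\omega}}(n)$, apply Lemma~\ref{lem:rep1} to get $h \in G_{\ul{\omega}}$ with $g|_{V_n} = h|_{V_n}$, so $gu = hu \in U$. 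Taking the union over all $n$ recovers the full $\Gamma_{\ul{\omega}}$-action on $U$.

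The expected main obstacle is purely bookkeeping around the definition of representation of $\Gamma_{\ul{\omega}}$ in \cite[\S 7.2]{tcares}: everything hinges on the axiom that the partial action of $\Gamma_{\ul{\omega}}(n)$ on any fixed vector factors through the restriction map $\Gamma_{\ul{\omega}}(n) \to \Hom(V_n, V)$. Assuming this is built into (or an immediate consequence of) the definition, the proof is essentially the two density arguments above; if that axiom is merely a derived property for polynomial representations, one may need a preliminary reduction through the tensor powers $V^{\otimes k}$ (where the factoring is transparent because any element lies in $V_n^{\otimes k}$ for some $n$) before passing to arbitrary subquotients.
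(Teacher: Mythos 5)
Your proposal is correct and follows essentially the same route as the paper: both fullness and essential surjectivity are reduced, via Lemma~\ref{lem:rep1}, to the fact that the action of $g \in \Gamma_{\ul{\omega}}(n)$ on a given vector agrees with that of some $h \in G_{\ul{\omega}}$. The only point you leave conditional --- whether the action's dependence on $g|_{V_n}$ alone is axiomatic or must be derived --- is resolved in the paper exactly by the fallback you describe: one lifts elements to an explicit presentation $W = W_1/W_2$ inside a sum of tensor powers of $V$, where $gw_1 = hw_1$ is transparent.
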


\begin{proof}
It is clear that $\res$ is faithful. We now show that it is full. Thus let $W$ and $W'$ be polynomial representations of $\Gamma_{\ul{\omega}}$ and let $f \colon W \to W'$ be a $G_{\ul{\omega}}$-linear map. Write $W=W_1/W_2$ where $W_2 \subset W_1$ are $\Gamma_{\ul{\omega}}$-subrepresentations of a sum of tensor powers of $V$, and similarly write $W'=W'_1/W'_2$. Let $w \in W$ be given, let $w'=f(w')$, and let $w_1 \in W_1$ and $w'_1 \in W'_1$ be lifts. Let $n$ be such that $w_1$ and $w'_1$ belong to the appropriate sums of tensor powers of $V_n$. We claim that $f(gw)=gf(w)$ for all $g \in \Gamma_{\ul{\omega}}(n)$, which will show that $f$ is a map of $\Gamma_{\ul{\omega}}$-representations. Thus let $g$ be given. Appealing to Lemma~\ref{lem:rep1}, let $h \in G_{\ul{\omega}}$ have the same restriction to $V_n$ as $g$. Then $gw_1=hw_1$ and $gw'_1=hw'_1$. Thus $f(gw)=f(hw)=hf(w)=gf(w)$, where in the second step we used that $f$ is $G_{\ul{\omega}}$-equivariant. This proves the claim, and the fullness of $\res$ follows.

It remains to show that $\res$ is essentially surjective. Thus let $W$ be a polynomial representation of $G_{\ul{\omega}}$. Write $W=W_1/W_2$ where $W_2 \subset W_1$ are $G_{\ul{\omega}}$-subrepresentations of a sum of tensor powers of $V$. We claim that $W_1$ and $W_2$ are $\Gamma_{\ul{\omega}}$-subrepresentations; this will imply that $W$ is naturally a polynomial $\Gamma_{\ul{\omega}}$-representation, and establish essential surjectivity. Let $w \in W_1$ be given and let $n$ be such that $w$ belongs to the appropriate sum of tensor powers of $V_n$. Given $g \in \Gamma_{\ul{\omega}}(n)$, by Lemma~\ref{lem:rep1} there is $h \in G_{\ul{\omega}}$ with the same restriction to $V_n$; thus $gw=hw$ belongs to $W_1$. Thus $gw \in W_1$ for all $g \in \Gamma_{\ul{\omega}}(n)$, which proves that $W_1$ is a $\Gamma_{\ul{\omega}}$-subrepresentation. The proof for $W_2$ is similar.
\end{proof}

Theorem~\ref{thm:rep} follows from Lemma~\ref{lem:rep2} and properties of $\Rep^{\pol}(\Gamma_{\ul{\omega}})$ established in \cite{tcares}.

\begin{remark}
The category $\Rep^{\pol}(\Gamma_{\uomega})$ is equivalent to several other categories, as discussed in \cite[\S 1.5]{tcares}. Thus $\Rep^{\pol}(G_{\ulambda})$ is equivalent to these categories as well. For instance, $\Rep^{\pol}(G_{\ulambda})$ is equivalent to the category of locally finite representations of the upwards $\ulambda$-Brauer category, which gives a combinatorial description of the category.
\end{remark}

\begin{question}
The paper \cite{spinrep} develops the theory of the spin representation for the infinite orthogonal group. Is there an analogous theory for $G_{\ulambda}$?
\end{question}

\section{Model-theoretic aspects} \label{s:model}

\emph{We emphasize that ``$\ulambda$-space'' means ``$\ulambda$-space over $k$'' unless otherwise mentioned.}

\subsection{Theories}

Fix a tuple $\ulambda=[\lambda_1, \ldots, \lambda_r]$. We now introduce the first-order language $\cL_{\ulambda}=\cL_{\ulambda,k}$ we use to describe $\ulambda$-spaces. This language is two-sorted: we use Greek symbols (such as $\alpha$, $\beta$) for scalar variables, and Roman symbols (such as $x$, $y$) for vector variables. The language contains function symbols for addition and multiplication of scalars, together with a constants for each element of $k$. It also contains function symbols for scalar-vector multiplication and vector addition, together with a constant symbol $\bzero$ for the zero vector. Finally, for each $1 \le i \le r$ there is a scalar-valued function symbol $\omega_i$ taking $\vert \lambda_i \vert$ vector inputs.

Suppose that $\phi$ is a formula with $m$ free scalars variables $\alpha_1, \ldots, \alpha_m$ and $n$ free vector variables $x_1, \ldots, x_n$. We then say that $\phi$ is a \defn{$(m,n)$-formula}. It will be convenient to package the variables into tuples $\ul{\alpha}=(\alpha_1, \ldots, \alpha_m)$ and $\ul{x}=(x_1, \ldots, x_n)$, and write $\phi(\ul{\alpha}, \ul{x})$ in place of $\phi(\alpha_1, \ldots, \alpha_m, x_1, \ldots, x_n)$.

Let $V$ be a $\ulambda$-space. Then $k \amalg V$ is a naturally structure for $\cL_{\ulambda}$. (It is important to remember that the scalars are part of the structure.) To define $\omega_i$, we convert the given $\lambda_i$-form on $V$ to a multilinear map as in \S \ref{ss:forms}. We let $\Th(V)$ be the theory of $V$; this is the set of all sentences in $\cL_{\ulambda}$ that are true for $V$. We say that two $\ulambda$-spaces $V$ and $W$ are \defn{elementarily equivalent} if $\Th(V)=\Th(W)$.

\begin{remark} \label{rmk:extn}
Suppose that $V'$ is a $\ulambda$-space over an extension field $k'$ of $k$. Then $k' \amalg V'$ is naturally a structure for $\cL_{\ulambda}$, and we let $\Th(V', k')$ be its theory. We will mostly not be concerned with this situation. However, one must keep it in mind, for if $V$ is a $\ulambda$-space then a model of the theory $\Th(V)$ is a $\ulambda$-space $V'$ over $k'$ such that $\Th(V', k')=\Th(V)$. Thus the model theory of $\Th(V)$ ``sees'' these examples.
\end{remark}

We now give two examples to illustrate some of the information first-order statements can detect about $\ulambda$-spaces.

\begin{example} \label{ex:lin-ind}
Consider the formula $\theta_2(x_1,x_2)$ given by
\begin{displaymath}
\forall \alpha_1, \alpha_2 ( \alpha_1 x_1+\alpha_2 x_2 =\bzero \implies \alpha_1=0 \land \alpha_2=0 )
\end{displaymath}
This formula expresses that $x_1$ and $x_2$ are linearly independent. Of course, there is a similar $(0,n)$-formula $\theta_n(\ul{x})$ expressing linear independence of $n$ vectors. We have $\dim(V) \ge n$ if and only if the sentence $\exists \ul{x} (\theta_n(\ul{x}))$ belongs to $\Th(V)$. Thus if $V$ and $W$ are elementarily equivalent then either $V$ and $W$ are both infinite dimensional, or $V$ and $W$ have equal finite dimension.
\end{example}

\begin{example} \label{ex:quad-thy}
Let $\ulambda=[(2)]$, so that we are working with quadratic spaces. Consider the formula $\psi(x)$ given by
\begin{displaymath}
\forall y(\omega(x,y)=0).
\end{displaymath}
This formula expresses that $x$ is in the null space of the form $\omega$. Thus the sentence
\begin{displaymath}
\exists x(x \ne \bzero \land \psi(x))
\end{displaymath}
means that the null space is non-zero. Similarly, letting $\theta_n$ be as in Example~\ref{ex:lin-ind}, the sentence
\begin{displaymath}
\exists x_1, \ldots, x_n (\theta_n(x_1, \ldots, x_n) \land \psi(x_1) \land \cdots \land \psi(x_n))
\end{displaymath}
means that there are $n$ linearly independent vectors in the null space. We thus see that if $V$ and $W$ are elementarily equivalent quadratic spaces then the null spaces of $V$ and $W$ are either both infinite dimensional, or have the same finite dimension.
\end{example}

%We have chosen to work with a two-sorted theory, where we have both scalars and vectors. Alternatively, one could work with a one-sorted theory, where the variables are vectors, and for each $a \in k$ there is a function symbol corresponding to scalar multiplication by $a$. The advantage to our approach is that it allows us to quantify over scalars, and therefore test for linear independence as in Example~\ref{ex:lin-ind}. The disadvantage is that $\ulambda$-spaces over extensions of $k$ are structures for $\cL_{\ulambda}$, and will appear in the model theory of $\ulambda$-spaces over $k$, as disucssed in Remark~\ref{rmk:extn}.

\subsection{Types}

An important idea in model theory is the notion of \emph{type}. We will require a small amount of type theory, which we now discuss. We refer to \cite[\S 6.3]{Hodges} for additional background.

Let $\fT$ be a complete theory in the language $\cL_{\ulambda}$, e.g., the theory of some $\ulambda$-space. We say that two $(0,n)$-formulas $\phi(\ul{x})$ and $\psi(\ul{x})$ are \defn{equivalent} modulo $\fT$ if the sentence
\begin{displaymath}
\forall \ul{x} (\phi(\ul{x}) \iff \psi(\ul{x}))
\end{displaymath}
belongs to $\fT$. The set $R_n$ of equivalence classes forms a boolean algebra under conjunction and disjunction. Of course, one could make a more general definition that accommodates $(m,n)$-formulas, but we will not need this.

Let $V$ be a $\ulambda$-space with $\fT=\fT(V)$, and let $\ul{v} \in V^n$. The \defn{type} of $\ul{v}$, denoted $t(\ul{v})$, is the set of all $(0,n)$-formulas $\phi$ satisfied by $\ul{v}$. Note that if $\phi$ and $\psi$ are equivalent the $\phi(\ul{v})$ holds if and only if $\psi(\ul{v})$ holds. Thus $t(\ul{v})$ is a union of equivalence classes, and can therefore be regarded as a subset of $R_n$. The type of $\ul{v}$ determines the isomorphism type of the $\ulambda$-space $\operatorname{span}(\ul{v})$, but typically contains more information (related to how this space sits in $V$).

In fact, there is a more abstract notion of type: an \defn{$n$-type} of $\fT$ is a maximal ideal of the ring $R_n$. The type $t(\ul{v})$ of $\ul{v} \in V^n$ is a type in this sense: indeed, $t(\ul{v})$ is the kernel of the ring homomorphism $R_n \to \bF_2$ that takes $\phi$ to~0 if $\phi(\ul{v})$ holds, and~1 otherwise. We say that a type $t$ of $\fT$ \defn{occurs} in $V$ if $t=t(\ul{v})$ for some $\ul{v} \in V^n$.

We now look at a few special classes of types.

\subsubsection{Linearly independent types}

We say that an $n$-type is \defn{linearly independent} if it contains the formula $\theta_n$ from Example~\ref{ex:lin-ind}. Of course, if $\ul{v} \in V^n$ then $t(\ul{v})$ is linearly independent if and only if $\ul{v}$ is. It typically suffices to study linearly independent types.

%\begin{proposition} \label{prop:li-atomic}
%A $\ulambda$-space $V$ is atomic if and only if every linearly independent type in $V$ is principal.
%\end{proposition}
%
%\begin{proof}
%Suppose every linearly independent type in $V$ is principal. We show that $V$ is atomic. Thus let $\ul{v} \in V^n$ be given. Applying a permutation if necessary, suppose that $v_1, \ldots, v_r$ is a basis for the span of $v_1, \ldots, v_n$. Let $\kappa_0(\ul{x})$ be a $(0,r)$-formula axiomatizing the type of $(v_1, \ldots, v_r)$. For $r+1 \le i \le n$, write $v_i=\sum_{j=1}^r c_{i,j} v_j$ with $c_{i,j} \in k$. Consider the $(0,n)$-formula $\kappa(\ul{x})$ given by
%\begin{displaymath}
%\kappa_0(x_1, \ldots, x_r) \land (x_{r+1}=c_{r+1,1} x_1 + \cdots + x_{r+1,r} x_r) \land \cdots \land (x_n=c_{n,1} x_1 + \cdots + x_{n,r} x_r).
%\end{displaymath}
%We claim that $\kappa(\ul{x})$ axiomatizes $t(\ul{v})$. To see this, suppose that $\phi(\ul{x})$ is some $(0,n)$-formula in $t(\ul{v})$. Let $\phi_0$ be the $(0,r)$-formula given by
%\begin{displaymath}
%\phi \big( x_1, \ldots, x_r, \sum_{j=1}^r c_{r+1,j} x_j, \ldots, \sum_{j=1}^r c_{n,j} x_j \big).
%\end{displaymath}
%Then $\phi_0(v_1, \ldots, v_r)$ holds and so $\phi_0(\ul{x})$ belongs to $t(v_1, \ldots, v_r)$. Thus $\forall \ul{x} (\kappa_0(\ul{x}) \implies \phi_0(\ul{x}))$ belongs to $\Th(V)$, and so $\forall \ul{x} (\kappa(\ul{x}) \implies \phi(\ul{x})$ does as well, which completes the proof.
%\end{proof}

\subsubsection{Principal types}

We say that an $n$-type $t$ is \defn{principal} if it is axiomatized by a single formula $\kappa(\ul{x}) \in t$. This means that $t$ consists of exactly those formulas $\phi(\ul{x})$ for which the sentence $\forall \ul{x} (\kappa(\ul{x}) \implies \phi(\ul{x}))$ belongs to $\fT$. Equivalently, it means that $t$ is the principal ideal of $R_n$ generated by $\kappa(\ul{x})$. Principal types are especially easy to work with: if $\kappa(\ul{x})$ axiomatizes $t$ then $\ul{v} \in V^n$ has type $t$ if and only if $\kappa(\ul{v})$ holds.

\begin{example}
Suppose that $\ulambda=\emptyset$, so we just have vector spaces, and let $\fT$ be the theory of a countable vector space. The following four formulas axiomatize principal types:
\begin{displaymath}
x=\bzero, \qquad x \ne \bzero, \qquad \theta_2(x,y), \qquad (x \ne \bzero) \land (y=2x).
\end{displaymath}
Here $\theta_2$ tests linear independence (see Example~\ref{ex:lin-ind}). It is not difficult to prove that these formulas axiomatize principal types (one can argue as in the proof of Proposition~\ref{prop:homo-type}.)
\end{example}

The following proposition demonstrates the usefulness of principal types:

\begin{proposition} \label{prop:atomic}
Let $V$ and $W$ be countable $\ulambda$-spaces that are elementarily equivalent and in which all linearly independent types are principal. Then $V$ and $W$ are isomorphic.
\end{proposition}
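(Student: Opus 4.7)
The plan is a back-and-forth argument adapted to the linear setting; the principal-type hypothesis is exactly what makes it run. First I would fix bases $(v_i)_{i \ge 1}$ of $V$ and $(w_i)_{i \ge 1}$ of $W$, and then inductively construct linearly independent tuples $\ul{a}^{(n)} = (a_1, \ldots, a_n) \in V^n$ and $\ul{b}^{(n)} = (b_1, \ldots, b_n) \in W^n$ with $t(\ul{a}^{(n)}) = t(\ul{b}^{(n)})$, arranging that in the limit the $a_i$ form a basis of $V$ and the $b_i$ form a basis of $W$. The base case $n = 0$ holds by elementary equivalence.

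For the forth step, given $\ul{a}^{(n)}$ and $\ul{b}^{(n)}$, I would take the first $v_k$ not already in $\operatorname{span}(a_1, \ldots, a_n)$ and set $a_{n+1} := v_k$. The type $t'$ of $\ul{a}^{(n+1)}$ is linearly independent, hence principal by hypothesis; let $\kappa(x_1, \ldots, x_{n+1})$ axiomatize it. The $(0,n)$-formula $\psi(\ul{x}) := \exists y\, \kappa(\ul{x}, y)$ belongs to $t(\ul{a}^{(n)})$ (witnessed by $a_{n+1}$), hence to $t(\ul{b}^{(n)})$, so I can choose $b_{n+1} \in W$ with $\kappa(\ul{b}^{(n+1)})$ true; since $\kappa$ axiomatizes $t'$, this forces $t(\ul{b}^{(n+1)}) = t'$. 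Crucially, $\ul{b}^{(n+1)}$ is then automatically linearly independent, because the formula $\theta_{n+1}$ of Example~\ref{ex:lin-ind} lies in $t'$. The back step is entirely symmetric: pick the first $w_k$ outside $\operatorname{span}(b_1, \ldots, b_n)$, set $b_{n+1} := w_k$, and produce the matching $a_{n+1}$ by the same argument.

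Interleaving forth and back steps yields bases $(a_i)$ of $V$ and $(b_i)$ of $W$, and I would define $\phi \colon V \to W$ to be the linear isomorphism sending $a_i \mapsto b_i$. To verify that $\phi$ is a $\ulambda$-space isomorphism, encode each $\lambda_i$-form as an $|\lambda_i|$-form as in \S\ref{ss:forms}; by multilinearity it suffices to check $\omega_i^V(a_{j_1}, \ldots, a_{j_d}) = \omega_i^W(b_{j_1}, \ldots, b_{j_d})$ for every index tuple, where $d = |\lambda_i|$. For each scalar $c \in k$ the statement $\omega_i(x_{j_1}, \ldots, x_{j_d}) = c$ is a $(0,n)$-formula (constants for $k$ are in the language), so taking $c$ to be the left-hand side makes this formula lie in $t(\ul{a}^{(n)}) = t(\ul{b}^{(n)})$, which gives that the right-hand side equals $c$ as well.

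The only real subtlety --- and the reason the argument is not just the standard Vaught back-and-forth for atomic models verbatim --- is that the principal-type hypothesis is only available for linearly independent tuples. This is why at each nontrivial step I extend the current tuple by a genuinely new basis vector rather than an arbitrary element; linear independence on the $W$-side then comes for free, since $\theta_{n+1}$ is already inside the transferred principal type.
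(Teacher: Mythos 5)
Your proposal is correct and follows essentially the same back-and-forth argument as the paper: the key step (transferring a principal type axiomatized by $\kappa$ via the formula $\exists y\,\kappa(\ul{x},y)$, with linear independence of the new tuple coming for free from $\theta_{n+1}$ lying in the type) is exactly the paper's. Your explicit verification at the end that $a_i \mapsto b_i$ respects the forms is a detail the paper leaves implicit, but it is not a different route.
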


\begin{proof}
Suppose that we have linearly independent tuples $\ul{v} \in V^n$ and $\ul{w} \in W^n$ such that $t(\ul{v})=t(\ul{w})$, and let $v_{n+1}$ be another element of $V$ that is linearly independent of $\ul{v}$. We show that there exists $w_{n+1} \in W$ such that $t(\ul{v}, v_{n+1})=t(\ul{w}, w_{n+1})$. This implies that $w_{n+1}$ is linearly independent of $\ul{w}$. Of course, the analogous statement with $V$ and $W$ switched will then be true by symmetry.

Let $\psi(\ul{x}, x_{n+1})$ axiomatize $t(\ul{v}, v_{n+1})$. Since $\exists x_{n+1} (\psi(\ul{v}, x_{n+1}))$ is true ($v_{n+1}$ is a witness), it follows that the formula $\exists x_{n+1} (\psi(\ul{x}, x_{n+1}))$ belongs to $t(\ul{v})$. It therefore belongs to $t(\ul{w})$, and so $\exists x_{n+1}(\psi(\ul{w}, x_{n+1}))$ is true. Let $w_{n+1}$ be a witness. Since $\psi(\ul{w}, w_{n+1})$ holds and $\psi$ axiomatizes $t(\ul{v}, v_{n+1})$, it follows that $t(\ul{w}, w_{n+1})=t(\ul{v}, v_{n+1})$.

The result now follows from a back and forth argument, as follows. Fix bases $X$ and $Y$ of $V$ and $W$ indexed by $\bN$. We will construct new bases $(v_i)_{i \ge 1}$ and $(w_i)_{i \ge 1}$ of $V$ and $W$ such that $t(v_1, \ldots, v_n)=t(w_1, \ldots, w_n)$ for all $n$. This implies that $v_i \mapsto w_i$ is an isomorphism of $\ulambda$-spaces. We construct our new bases inductively. Suppose we have constructed $(v_1, \ldots, v_n)$ and $(w_1, \ldots, w_n)$. We then do the following two steps:
\begin{itemize}
\item Let $v_{n+1}$ be the first basis vector in $X$ not in the span of $(v_1, \ldots, v_n)$, and let $w_{n+1}$ be a vector in $W$ such that $t(w_1, \ldots, w_{n+1})=t(v_1, \ldots, v_{n+1})$.
\item Let $w_{n+2}$ be the first basis vector in $Y$ not in the span of $(w_1, \ldots, w_{n+1})$, and let $v_{n+2}$ be a vector in $V$ such that $t(v_1, \ldots, v_{n+2})=t(w_1, \ldots, w_{n+2})$.
\end{itemize}
The choice of $v_{n+1}$ ensures that $(v_1, \ldots, v_{n+1})$ is linearly independent, and so $(w_1, \ldots, w_{n+1})$ is linearly independent too. Similarly for the second step. Since every element of $X$ belongs to the span of $(v_1, \ldots, v_n)$ for some $n$, it follows that $(v_i)_{i \ge 1}$ is indeed a basis; similarly for $(w_i)_{i \ge 1}$.
\end{proof}

\begin{remark}
The above proof is an adaptation of a standard argument \cite[Theorem~7.2.3]{Hodges} to the linear case.
\end{remark}

\begin{remark}
Let $V$ be a $\ulambda$-space in which all linearly independent types are principal. One can then show that all types in $V$ are principal. In fact, this even holds for $(m,n)$-types, i.e., types involving scalars. In the terminology of model theory, $V$ is an \emph{atomic structure}. See \cite[\S 7.2]{Hodges} for more.
\end{remark}

\subsubsection{Rational types}

Every type appears in some model of $\fT$. However, we only care about models where the scalar field is $k$. With this in mind, we say that an $n$-type $t$ is \defn{rational} if there exists a $\ulambda$-space $V$ with $\Th(V)=\fT$ such that $t$ occurs in $V$. (We re-emphasize that $V$ is over $k$.) We note that even is $k$ is algebraically closed, irrational types will typically exist.

\begin{example}
Here is an example of an irrational 2-type. Take $\ulambda$ to be empty and $\fT$ to be the theory of a countable dimensional $k$-vector space. Start with the formulas
\begin{displaymath}
\exists \alpha (x = \alpha y), \qquad x \ne \bzero, \qquad y \ne \bzero.
\end{displaymath}
Thus we are looking at a pair of non-zero linearly dependent vectors. Of course, the scalar $\alpha$ in the first equation is unique. Now let $t_0$ be a 1-type in the theory of $k$ (as a field, or really, field extension of $k$), which is not the type of $0 \in k$. For each formula $\psi(\alpha)$ in $t_0$, add the formula
\begin{displaymath}
\exists \alpha (x=\alpha y \land \psi(\alpha))
\end{displaymath}
to our list of formulas. Now take the 2-type $t$ axiomatized by all the above formulas. This expresses that $x$ and $y$ are linearly dependent, and that the scalar relating them has type $t_0$. If $t_0$ does not occur as a type in $k$ then $t$ will not be a rational type. (To produce such a $t_0$, let $k^*$ be an ultrapower of $k$, let $a$ be an element of $k^*$ that does not belong to $k$, and take $t_0$ be the type of $a$.)
\end{example}

\subsection{The classifying map} \label{ss:class}

We now pause our discussion of model theory to introduce a useful tool. Let $(V, \uomega)$ be a $\ulambda$-space. Fix a a non-negative integer $n$, and let $V^{[n]}$ be the subset of $V^n$ consisting of tuples $(v_1, \ldots, v_n)$ that are linearly independent. Let $X=\bS_{\ulambda}(k^n)^*$ be the space of $\ulambda$-structures on $k^n$. Given $\ul{v} \in V^{[n]}$, we obtain an injective linear map $j_{\ul{v}} \colon k^n \to V$ by mapping the $i$th basis vector $e_i$ of $k^n$ to $v_i$. We define a function
\begin{displaymath}
\pi \colon V^{[n]} \to X, \qquad \pi(\ul{v}) = j_{\ul{v}}^*(\uomega).
\end{displaymath}
We call $\pi$ the \defn{classifying map}.

Note that $X$ is a finite dimensional $k$-vector space. If we fix a basis, then the components of $\pi(\ul{v})$ are obtained by evaluating the given $\lambda_i$-forms on the components of $\ul{v}$ (and taking linear combinations). We thus see that $\pi$ is expressible in the language $\cL_{\ulambda}$.

\begin{example}
Suppose $\ulambda=[(2)]$. We can then identify $X$ with the space of symmetric $n \times n$ matrices. Let $x_{i,j}$ with $1 \le i \le j \le n$ be the coordinates on this space. Then $\pi_{i,j}(\ul{v})=\omega(v_i, v_j)$, where $\pi_{i,j}$ denotes the $(i,j)$ coordinate of $\pi$.
\end{example}

We need a few simple properties of $\pi$:

\begin{proposition} \label{prop:class}
Maintain the above notation, and let $G=\Aut(V)$.
\begin{enumerate}
\item The map $\pi$ is $G$-invariant, i.e., $\pi(\ul{v})=\pi(g \ul{v})$ for $g \in G$.
\item If $V$ is universal then $\pi$ is surjective.
\item If $V$ is homogeneous then $\pi$ is injective modulo $G$, i.e., $\pi(\ul{v})=\pi(\ul{w})$ if and only if $\ul{v}=g \ul{w}$ for some $g \in G$.
\end{enumerate}
\end{proposition}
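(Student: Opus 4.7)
The proposition bundles three elementary facts, so the plan is simply to unwind the definitions carefully in each part.

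For (a), the core observation is that for any $g \in G$ and $\ul{v} \in V^{[n]}$, the maps $j_{\ul{v}}, j_{g\ul{v}} \colon k^n \to V$ satisfy $j_{g\ul{v}} = g \circ j_{\ul{v}}$, because both send $e_i \mapsto gv_i$. Since $g$ is an automorphism of the $\ulambda$-space $V$, we have $g^{*}\uomega = \uomega$. Pulling back through the composition gives $\pi(g\ul{v}) = j_{g\ul{v}}^{*}\uomega = j_{\ul{v}}^{*}(g^{*}\uomega) = j_{\ul{v}}^{*}\uomega = \pi(\ul{v})$.

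For (b), given an arbitrary $\uxi \in X = \bS_{\ulambda}(k^n)^{*}$, I would view $(k^n, \uxi)$ as a finite-dimensional $\ulambda$-space. Universality of $V$ supplies an embedding $\iota \colon (k^n, \uxi) \hookrightarrow (V, \uomega)$ of $\ulambda$-spaces, so in particular $\iota^{*}\uomega = \uxi$. Setting $v_i = \iota(e_i)$, injectivity of $\iota$ gives $\ul{v} \in V^{[n]}$, and $j_{\ul{v}} = \iota$, so $\pi(\ul{v}) = \uxi$.

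For (c), the ``if'' direction is immediate from (a). For ``only if,'' assume $\pi(\ul{v}) = \pi(\ul{w}) =: \uxi$. Let $W = \operatorname{span}(\ul{v})$ and $W' = \operatorname{span}(\ul{w})$, each carrying the $\ulambda$-structure induced from $V$. The classifying-map identity says exactly that $j_{\ul{v}} \colon (k^n, \uxi) \to W$ and $j_{\ul{w}} \colon (k^n, \uxi) \to W'$ are both isomorphisms of $\ulambda$-spaces, so the composite $\phi := j_{\ul{w}} \circ j_{\ul{v}}^{-1} \colon W \to W'$ is an isomorphism of finite-dimensional $\ulambda$-subspaces of $V$, with $\phi(v_i) = w_i$. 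Homogeneity of $V$ extends $\phi$ to some $h \in G$, giving $h\ul{v} = \ul{w}$; taking $g = h^{-1}$ yields $\ul{v} = g\ul{w}$.

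There is no real obstacle here; every step is a direct unwinding of definitions, with universality used in (b) and homogeneity used in (c). The only spot that requires a moment of care is identifying $\pi(\ul{v}) = \pi(\ul{w})$ with an actual $\ulambda$-isomorphism $W \to W'$, which is what makes the homogeneity hypothesis applicable in part (c).
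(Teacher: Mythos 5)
Your proof is correct and follows essentially the same route as the paper's: the same pullback computation for (a), the same use of universality applied to the $\ulambda$-space structure on $k^n$ for (b), and the same identification of the equality of classifying-map values with a $\ulambda$-isomorphism of spans followed by an appeal to homogeneity for (c). Your handling of the inverse in (c) is in fact slightly more careful than the paper's.
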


\begin{proof}
(a) We have
\begin{displaymath}
\pi(g\ul{v}) = j_{g\ul{v}}^*(\uomega) = (gj_{\ul{v}})^*(\uomega) = j_{\ul{v}}^* g^* (\uomega) = j_{\ul{v}}^*(\uomega) = \pi(\ul{v}).
\end{displaymath}
Here $g^*(\uomega)=\uomega$ since $g$ is an automorphism of $V$.

(b) Let $\ueta \in X$ be a $\ulambda$-structure on $k^n$. Since $V$ is universal, there is an embedding of $\ulambda$-spaces $j \colon (k^n, \ueta) \to (V, \uomega)$; this means $j^*(\uomega)=\ueta$. Letting $v_i=j(e_i)$, we have $j=j_{\ul{v}}$, and so $\pi(\ul{v})=\ueta$.

(c) Suppose $\pi(\ul{v})=\pi(\ul{w})$. This exactly means that the linear isomorphism $\operatorname{span}(\ul{v}) \to \operatorname{span}(\ul{w})$ taking $v_i$ to $w_i$ is an isomorphism of $\ulambda$-spaces. Since $V$ is homogeneous, this isomorphism extends to an automorphism $g$ of $G$. Clearly, $\ul{v}=g \ul{w}$, which proves the claim.
\end{proof}

\subsection{Types in homogeneous spaces} \label{ss:homo-type}

We now examine $n$-types in the universal homogeneous countable $\ulambda$-space $V=V_{\ulambda}$. Let $\pi \colon V^{[n]} \to X$ be the classifying map (\S \ref{ss:class}), with $X=\bS_{\ulambda}(k^n)^*$. Suppose that $X$ is $m$-dimensional (as a vector space), and fix a basis. Let $\kappa(\ul{\alpha}, \ul{x})$ be the $(m,n)$-formula such that
\begin{displaymath}
\kappa(\ul{a}, \ul{v}) \iff \theta_n(\ul{v}) \land \pi(\ul{v})=\ul{a},
\end{displaymath}
where $\theta_n$ is as in Example~\ref{ex:lin-ind}. In other words, $\kappa(\ul{a}, \ul{v})$ holds if and only if $\ul{v}$ is linearly independent and $\pi(\ul{v})$ is the point $\ul{a}$ of $X$. We let $\kappa_{\ul{a}}(\ul{x})$ denote the $(0,n)$-formula $\kappa(\ul{a}, \ul{x})$.

\begin{proposition} \label{prop:homo-type}
We have the following:
\begin{enumerate}
\item For $\ul{a} \in X$, the formula $\kappa_a(\ul{x})$ axiomatizes a principal type $t_{\ul{a}}$ of $\Th(V)$ which occurs in $V$.
\item The linearly independent rational types of $\Th(V)$ are exactly the $t_{\ul{a}}$ with $\ul{a} \in X$.
\end{enumerate}
\end{proposition}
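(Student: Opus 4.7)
The strategy hinges on Proposition~\ref{prop:class}: since $V=V_{\ulambda}$ is both universal and homogeneous, the classifying map $\pi$ is surjective onto $X$ and its fibers are exactly the $G_{\ulambda}$-orbits on $V^{[n]}$. Combined with the fact that automorphisms of $V$ preserve first-order truth, so every $(0,n)$-formula defines a $G_{\ulambda}$-invariant subset of $V^n$, this will yield both parts.

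For part (a), fix $\ul{a}\in X$. By construction, the set $S_{\ul{a}}:=\{\ul{v}\in V^n:V\models \kappa_{\ul{a}}(\ul{v})\}$ equals $\pi^{-1}(\ul{a})$, which by Proposition~\ref{prop:class}(b,c) is a single non-empty $G_{\ulambda}$-orbit. For any $(0,n)$-formula $\phi(\ul{x})$, the defining set of $\phi$ in $V^n$ is $G_{\ulambda}$-invariant, so it either contains $S_{\ul{a}}$ entirely or is disjoint from it. Consequently exactly one of $\forall \ul{x}(\kappa_{\ul{a}}(\ul{x})\implies \phi(\ul{x}))$ or $\forall \ul{x}(\kappa_{\ul{a}}(\ul{x})\implies \neg\phi(\ul{x}))$ lies in $\Th(V)$, so $\kappa_{\ul{a}}$ axiomatizes a complete (and, by definition, principal) type $t_{\ul{a}}$. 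Moreover, $t(\ul{v})=t_{\ul{a}}$ for any $\ul{v}\in S_{\ul{a}}$, so $t_{\ul{a}}$ occurs in $V$.

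For part (b), each $t_{\ul{a}}$ is rational by (a) and linearly independent because $\kappa_{\ul{a}}$ contains $\theta_n$. Conversely, let $t$ be a linearly independent rational $n$-type of $\Th(V)$, realized by some $\ul{v}'\in(V')^n$ in a $\ulambda$-space $V'$ over $k$ with $\Th(V')=\Th(V)$. Because $\pi$ is definable in $\cL_{\ulambda}$ (its coordinates are polynomial expressions in $\omega_i$-values on subtuples) and $V'$ lives over $k$, the value $\ul{a}:=\pi(\ul{v}')$ has all coordinates in $k$ and hence lies in $X$. Then $V'\models \kappa_{\ul{a}}(\ul{v}')$, so $\kappa_{\ul{a}}\in t$; since $t_{\ul{a}}$ consists exactly of the consequences of $\kappa_{\ul{a}}$ modulo $\Th(V)=\Th(V')$, every formula of $t_{\ul{a}}$ lies in $t$, and maximality of both types forces $t=t_{\ul{a}}$.

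There is no serious obstacle once Proposition~\ref{prop:class} is in hand; both parts reduce to combining universality, homogeneity, and the $G_{\ulambda}$-invariance of formulas. The only point meriting a brief sanity check is that in part (b) the classifying value of a tuple in any $k$-model of $\Th(V)$ really does land in $X$ rather than in an extension, which is automatic because the model carries a $\ulambda$-structure over $k$.
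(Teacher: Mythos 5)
Your proposal is correct and follows essentially the same route as the paper: both parts rest on Proposition~\ref{prop:class} (surjectivity of $\pi$ and injectivity modulo $G_{\ulambda}$) together with the $G_{\ulambda}$-invariance of definable sets, with part (b) reading off $\ul{a}=\pi(\ul{v}')$ in an arbitrary $k$-model. Your phrasing of (a) in terms of $\pi^{-1}(\ul{a})$ being a single orbit, and your explicit remarks that $t_{\ul{a}}$ is linearly independent and that $\pi(\ul{v}')$ lands in $X$ over $k$, are just slightly more spelled-out versions of what the paper leaves implicit.
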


\begin{proof}
(a) Let $\ul{a} \in X$ be given. Choose $\ul{v} \in V^{[n]}$ with $\pi(\ul{v})=\ul{a}$, which is possible since $\pi$ is surjective (Proposition~\ref{prop:class}). It is clear that $\kappa_{\ul{a}}(\ul{x})$ belongs to $t(\ul{v})$. We claim that it axiomatizes it. Suppose that $\phi(\ul{x})$ belongs to $t(\ul{v})$. We must show that the sentence
\begin{displaymath}
\forall \ul{x} (\kappa_{\ul{a}}(\ul{x}) \implies \phi(\ul{x}))
\end{displaymath}
belongs to $\Th(V)$. To verify this, it is enough to show that $\kappa_a(\ul{w})$ implies $\phi(\ul{w})$ for all $\ul{w} \in V^n$. Thus suppose $\kappa_a(\ul{w})$ holds. Then $\ul{w}$ is linearly independent and $\pi(\ul{w})=\ul{a}=\pi(\ul{v})$. Since $\pi$ is injective modulo $G_{\ulambda}$ (Proposition~\ref{prop:class}), we have $\ul{w}=g \ul{v}$ for some $g \in G_{\ulambda}$. Since $\phi$ is invariant under $G_{\ulambda}$ and $\phi(\ul{v})$ holds, it follows that $\phi(\ul{w})$ holds. 

(b) Let $t$ be a linearly independent rational type of $\Th(V)$. By definition, there is some $\ulambda$-space $W$ with $\Th(W)=\Th(V)$ and some $\ul{w} \in W^n$ such that $t=t(\ul{w})$. Let $\pi' \colon W^{[n]} \to X$ be the classifying map, and let $\ul{a}=\pi'(\ul{w})$. Then $\kappa_{\ul{a}}(\ul{w})$ holds, and so $t(\ul{w})=t_{\ul{a}}$.
\end{proof}

\subsection{Categoricity}

In classical model theory, a countable structure $X$ is called \defn{$\omega$-categorical} if any other countable structure that is elementarily equivalent to $X$ is actually isomorphic to $X$. This has proved to be an important concept, and is closely related to homogeneity (most countable homogeneous structures of interest are $\omega$-categorical). It therefore makes sense to examine the idea in the linear setting.

The usual concept of $\omega$-categorical is not the right thing to consider for $\ulambda$-spaces. There are two problems. The first is that if $k$ is uncountable then there are no relevant countable structures. (The field $k$ is technically part of the structure, so even the zero space leads to an uncountable structure.) The second problem relates to the issue raised in Remark~\ref{rmk:extn} concerning the coefficient field. We illustrate this with an example. Suppose $k=\bQ$  and $\ulambda$ is empty so that we just have vector spaces. We would like $k^{\oplus \infty}$ (countable sum) to count as $\omega$-categorical. However, if $k' \ne \bQ$ is a countable field that is elementarily equivalent to $k$ as field then the $\cL_{\ulambda}$-structures associated to $(k')^{\oplus \infty}$ and $k^{\oplus \infty}$ are elementarily equivalent but not isomorphic. Since such $k'$ do exist, it follows that $k^{\oplus \infty}$ is not $\omega$-categorical.

Due to the above issues, we introduce the following variant of the $\omega$-categorical concept in our setting:

\begin{definition}
Let $V$ be a countable $\ulambda$-space. We say that $V$ is \defn{linearly $\omega$-categorical} if any countable $\ulambda$-space that is elementarily equivalent to $V$ is isomorphic to $V$. 
\end{definition}

We emphasize that in the above definition, ``countable'' means ``dimension $\aleph_0$,'' and both $V$ and $W$ are over $k$. The following is our main theorem related to this concept:

\begin{theorem} \label{thm:categorical}
The space $V_{\ulambda}$ is linearly $\omega$-categorical.
\end{theorem}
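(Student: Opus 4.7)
The plan is to reduce the theorem to the combination of two facts already established: the back-and-forth criterion of Proposition~\ref{prop:atomic}, and the classification of linearly independent rational types of $\Th(V_{\ulambda})$ given in Proposition~\ref{prop:homo-type}. Let $W$ be any countable $\ulambda$-space with $\Th(W)=\Th(V_{\ulambda})$; I want to construct an isomorphism $V_{\ulambda}\cong W$.

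First I would observe that, by our conventions, both $V_{\ulambda}$ and $W$ are $\ulambda$-spaces \emph{over $k$}, so any $n$-type of $\fT:=\Th(V_{\ulambda})$ that is realized by a tuple in $V_{\ulambda}^n$ or $W^n$ is rational in the sense of \S5. Next, by Proposition~\ref{prop:homo-type}(b), every linearly independent rational $n$-type of $\fT$ has the form $t_{\ul{a}}$ for some $\ul{a}\in X=\bS_{\ulambda}(k^n)^*$, and by part (a) each such $t_{\ul{a}}$ is principal, axiomatized by the formula $\kappa_{\ul{a}}(\ul{x})$. Combining these two remarks, every linearly independent type that is realized in $V_{\ulambda}$ or in $W$ is principal with an explicit axiomatization.

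With this in hand, I would apply the back-and-forth argument of Proposition~\ref{prop:atomic}. Concretely, fix bases of $V_{\ulambda}$ and $W$ indexed by $\bN$, and inductively build linearly independent tuples $(v_1,\ldots,v_n)\subset V_{\ulambda}$ and $(w_1,\ldots,w_n)\subset W$ with $t(\ul{v})=t(\ul{w})$, alternating which side is extended by the next basis vector and which side is matched. The matching step is exactly as in Proposition~\ref{prop:atomic}: principality of $t(\ul{v},v_{n+1})=t_{\ul{a}}$ gives an axiomatizing $\psi$, the formula $\exists x_{n+1}\,\psi$ lies in the common type of $\ul{v}$ and $\ul{w}$, and a witness $w_{n+1}\in W$ then has the required type by principality. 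The resulting bijection $v_i\mapsto w_i$ is an isomorphism of $\ulambda$-spaces because agreement of types forces the classifying maps $\pi$ on both sides to match, and agreement of $\pi$ on a basis determines the $\ulambda$-structure.

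The only point to be careful about is matching the hypothesis of Proposition~\ref{prop:atomic} (``all linearly independent types are principal'') to what Proposition~\ref{prop:homo-type} supplies (principality only for \emph{rational} types). The resolution, which I would state explicitly, is that the back-and-forth proof only ever calls for principality of types actually realized by tuples in $V_{\ulambda}$ or $W$; since those spaces live over $k$ such realized types are automatically rational, so Proposition~\ref{prop:homo-type} suffices. Apart from this bookkeeping, the argument is routine, so I do not expect a serious obstacle.
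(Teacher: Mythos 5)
Your proposal is correct and follows essentially the same route as the paper: combine Proposition~\ref{prop:homo-type} (linearly independent rational types are principal) with Proposition~\ref{prop:atomic} (back-and-forth for elementarily equivalent countable spaces with principal types). Your explicit remark that types realized in $V_{\ulambda}$ or $W$ are automatically rational is exactly the (implicit) bridge the paper uses when it passes from ``rational types are principal'' to ``all types occurring in $V$ and $W$ are principal.''
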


\begin{proof}
Let $W$ be a countable $\ulambda$-space that is elementarily equivalent to $V$. Since every linearly independent rational type of $\Th(V)=\Th(W)$ is principal (Proposition~\ref{prop:homo-type}), it follows that all linearly independent types in $V$ and $W$ are principal. Thus $V$ and $W$ are isomorphic (Proposition~\ref{prop:atomic}), and so $V$ is linearly $\omega$-categorical.
\end{proof}

\begin{remark}
There are other examples of linearly $\omega$-categorical spaces. For example, if $k$ is algebraically closed then any countable quadratic space is linearly $\omega$-categorical. One can see this using the observation in Example~\ref{ex:quad-thy} and some related ideas. This example is also discussed (from a different perspective) in \cite{Kamsma}.
\end{remark}

\begin{remark} \label{rmk:ryll}
Let $\Omega$ be a countable structure over a countable language. The classical Ryll-Nardzewski theorem asserts that the following conditions are equivalent:
\begin{itemize}
\item $\Omega$ is $\omega$-categorical.
\item $\Th(\Omega)$ has finitely many $n$-types for all $n$.
\item $\Aut(\Omega)$ is oligomorphic.
\end{itemize}
See \cite[\S 7.3]{Hodges} for a more complete statement.

It is natural to look for a linear analog of the Ryll-Nardzewski theorem. Let $V$ be a countable $\ulambda$-space. Consider the following conditions:
\begin{itemize}
\item $V$ is linearly $\omega$-categorical.
\item The $n$-types in $\Th(V)$ form a finite dimensional space, for all $n$.
\item $\Aut(V)$ is linear-oligomorphic.
\end{itemize}
Are these conditions equivalent? The precise meaning of the second condition is not clear to us yet. For $V_{\ulambda}$, we showed that the linearly independent rational $n$-types are the $k$-points of the finite dimensional variety $X=\bS_{\ulambda}(k^n)$ (Proposition~\ref{prop:homo-type}), which confirms this condition to some extent.
\end{remark}

\begin{remark}
One can also define linearly $\kappa$-categorical, for other infinite cardinals $\kappa$: the $\ulambda$-spaces involved should have dimension $\kappa$. We do not know what happens in uncountable cardinalities. The behavior of quadratic spaces in dimension $\aleph_1$ (see Example~\ref{ex:quad-space}) suggests the situation could be very different.
\end{remark}

\subsection{Quantifier elimination} \label{ss:quant}

Recall that a first-order theory $\fT$ has \defn{quantifier elimination} if every formula is equivalent (modulo $\fT$) to a quantifier-free formula, and a structure has \defn{quantifier elimination} if its theory does. In general, $\ulambda$-spaces do not have quantifier elimination. There are two issues. First, $k$ itself may not have quantifier elimination (as a field), and this prevents $\Th(V)$ from having quantifier elimination. Second, the formula $\theta_n$ from Example~\ref{ex:lin-ind} expressing linear independence will typically not be equivalent to a quantifier-free formula, for any $k$.

In both issues above, the problematic quantifiers are over scalar variables. This suggests the following definition:

\begin{definition}
We say that a theory $\fT$ for $\cL_{\ulambda}$ has \defn{vector-quantifier elimination} if every formula is equivalent (modulo $\fT$) to one involving no quantifiers over vector variables. We say that a $\ulambda$-space $V$ has \defn{vector-quantifier elimination} if $\Th(V)$ does.
\end{definition}

The following is our main result in this direction:

\begin{theorem} \label{thm:quant}
The space $V_{\ulambda}$ has vector-quantifier elimination.
\end{theorem}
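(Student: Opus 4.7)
The plan is to eliminate vector quantifiers one at a time by induction on formula complexity: it suffices to show that for any vector-quantifier-free $\psi(\ul{\alpha}, \ul{x}, y)$, the formula $\exists y \, \psi(\ul{\alpha}, \ul{x}, y)$ is equivalent modulo $\Th(V_\ulambda)$ to a vector-quantifier-free formula. Because any $\cL_\ulambda$-sentence true in $V_\ulambda$ lies in $\Th(V_\ulambda)$ and hence holds in every model of the theory, it suffices to verify the proposed equivalence inside $V_\ulambda$ itself.

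Fix $\ul{a} \in k^m$ and $\ul{v} \in V_\ulambda^n$, and analyze when $\exists y \, \psi(\ul{a}, \ul{v}, y)$ holds. First reduce to the case where $\ul{v}$ is linearly independent, by extracting a maximal independent subtuple together with the uniquely determined scalar matrix of dependency coefficients (both extractable via vector-quantifier-free formulas with scalar quantifiers). Then split according to whether the candidate witness $y$ lies in $\operatorname{span}(\ul{v})$. In the dependent case, substitute $y = \sum_i \alpha_i v_i$; using multilinearity of the forms $\omega_i$, $\psi(\ul{a}, \ul{v}, y)$ reduces syntactically to a scalar formula $\tilde\psi_1(\ul{a}, \ul{\alpha}, \pi(\ul{v}))$, where $\pi$ is the classifying map of \S\ref{ss:class}. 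In the independent case, $(\ul{v}, y)$ is itself linearly independent, and the same manipulation expresses $\psi$ as a scalar formula $\tilde\psi_2(\ul{a}, \pi(\ul{v}, y))$ in the extended classifying value $\pi(\ul{v}, y) \in X_{n+1} = \bS_\ulambda(k^{n+1})^*$. The crucial realizability assertion is that every $\ul{b} \in X_{n+1}$ restricting to $\pi(\ul{v}) \in X_n$ arises as $\pi(\ul{v}, y)$ for some $y \in V_\ulambda$: Proposition \ref{prop:class}(b) produces some realization $(\ul{v}', y')$ of $\ul{b}$, and Proposition \ref{prop:class}(c) conjugates $\ul{v}'$ to $\ul{v}$ via an element of $G_\ulambda$, transporting $y'$ to the desired $y$.

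Assembling the disjunction of the two cases yields a scalar-quantified formula in $(\ul{a}, \pi(\ul{v}))$. Since each coordinate of $\pi(\ul{v})$ is a term of the form $\omega_i(v_{j_1}, \ldots, v_{j_{\vert \lambda_i \vert}})$ (an atomic expression of $\cL_\ulambda$), the resulting formula is vector-quantifier-free, completing the inductive step. Proposition~\ref{prop:olig2}, the linear-oligomorphicity of the stabilizer $G_\ulambda(\ul{v})$, underwrites this strategy by ensuring that the orbit data over $\ul{v}$---equivalently, the possible values of $\pi(\ul{v}, y)$---is uniformly parametrized by the finite-dimensional fiber $X_{n+1}$ above $\pi(\ul{v})$, so that the scalar reduction genuinely collapses an $\exists y$ over $V_\ulambda$ to an $\exists \ul{\beta}$ over the field. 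The main technical obstacle I anticipate is orchestrating the case split (linear dependence of $\ul{v}$, dependence of $y$ on $\ul{v}$) cleanly and making the syntactic reduction of $\psi$ to the scalar $\tilde\psi_i$ fully precise, so that at every stage of the induction the output is visibly free of vector quantifiers.
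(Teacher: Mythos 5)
Your proposal is correct, but it takes a genuinely different route from the paper's proof. The paper does not perform a syntactic, one-quantifier-at-a-time elimination. Instead it first reduces (Lemma~\ref{lem:quant-3}) to formulas $\phi$ whose free vector variables are forced to be linearly independent, then argues semantically: using linear-oligomorphicity (Theorem~\ref{thm:olig}) it chooses a finite-dimensional $W$ meeting every $G_{\ulambda}$-orbit on $V^{[n]}$, shows via Lemma~\ref{lem:quant-2} (which is where Proposition~\ref{prop:olig2} is actually needed, to relocate witnesses of $\exists y$ into a finite-dimensional subspace) that the solution set of $\phi$ restricted to $W$ is definable over the field, pushes this set forward along the classifying map $\pi$, and recovers $\phi$ from the image using $G$-invariance together with Proposition~\ref{prop:class}(b),(c). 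Your argument instead expands $\psi$ multilinearly so that, on linearly independent tuples, its truth value is a field-definable function of $\pi(\ul{v},y)$, and then replaces $\exists y$ by a scalar quantifier over the fiber of the restriction map $\bS_{\ulambda}(k^{n+1})^* \to \bS_{\ulambda}(k^n)^*$; the realizability of every point of that fiber is exactly the one-point extension property of $V_{\ulambda}$, which you correctly derive from Proposition~\ref{prop:class}(b),(c). What your route buys is that it is more explicitly syntactic and visibly effective, and it uses only universality and homogeneity of $V_{\ulambda}$; what the paper's route buys is that the key lemma (Lemma~\ref{lem:quant-2}) applies to any space whose pointwise stabilizers are linearly oligomorphic, and it avoids the multilinear bookkeeping by treating the whole formula at once. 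One small miscalibration in your write-up: Proposition~\ref{prop:olig2} does not in fact ``underwrite'' your strategy --- your realizability step rests on Proposition~\ref{prop:class} alone, and nothing in your argument uses the oligomorphicity of $G_{\ulambda}(\ul{v})$; that proposition is load-bearing only in the paper's version of the argument. Be sure, when writing this up in full, to make the case split on linear dependencies of $\ul{x}$ (and of $y$ over $\ul{x}$) produce a finite disjunction at every stage, as in the paper's Lemma~\ref{lem:quant-3}, so each step of the induction outputs a genuine formula.
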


We require several lemmas before giving the proof. We say that a subset $S$ of $k^n$ is a \defn{$D$-(sub)set} if it is definable in the language of fields (using constants from $k$). This is typically called a ``definable subset,'' but this terminology could be ambiguous in our setting (since we have both the language of fields and of $\ulambda$-spaces). Note that the notion of D-set is invariant under $\GL_n(k)$, and so it therefore makes sense for subsets of finite dimensional vector spaces.

\begin{lemma} \label{lem:quant-1}
Let $V$ be a finite $\ulambda$-space, let $\phi(\ul{\alpha}, \ul{x})$ be an $(m,n)$-formula, and let $K \subset k^m \times V^n$ be the set satisfying $\phi$. Then $K$ is a D-set.
\end{lemma}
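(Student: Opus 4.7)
The plan is to proceed by induction on the structure of the formula $\phi$, after fixing an ordered basis of $V$ so as to identify $V$ with $k^N$ where $N=\dim V$, and hence $k^m\times V^n$ with $k^{m+nN}$. Under this identification, I want to show that the set defined by any $\cL_{\ulambda}$-formula is a D-set in $k^{m+nN}$, i.e., definable purely in the language of fields over $k$. The main content is the base case (atomic formulas); the inductive step is straightforward because D-sets are closed under Boolean operations and field-quantification.

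For the base case, I would enumerate the atomic formulas that can occur in $\cL_{\ulambda}$. A vector-valued term is built from vector variables, the constant $\bzero$, vector addition, and scalar-vector multiplication; after choosing coordinates, each of its $N$ components is a polynomial (with coefficients in $k$) in the scalar variables $\ul{\alpha}$ and the basis coordinates of the vector variables. A scalar-valued term is built additionally from field operations and from the symbols $\omega_i$ applied to $|\lambda_i|$ vector terms; but each $\omega_i$ is multilinear on $V$ and is therefore expressed, in any chosen basis, as a polynomial in the basis coordinates of its arguments. Composing these, every scalar-valued term of $\cL_{\ulambda}$ becomes a polynomial in $\ul{\alpha}$ and the coordinates. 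Consequently every atomic formula (an equality of scalar terms, or equivalently an equality of vector terms, which is a conjunction of $N$ scalar equalities) defines a D-set.

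For the inductive step, D-sets form a Boolean algebra closed under projection (i.e., $\exists$-quantification) in the language of fields, and the vector quantifiers $\exists x$, $\forall x$ in $\cL_{\ulambda}$ translate after the basis identification to quantification over the $N$ coordinate scalars of $x$; scalar quantifiers are already field quantifiers. Hence if each subformula of $\phi$ defines a D-set, so does $\phi$ itself.

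I do not expect a real obstacle here: the lemma is essentially the statement that after choosing a basis, the language $\cL_{\ulambda}$ interpreted in a finite $\ulambda$-space is a definitional expansion of a reduct of the language of fields. The only point needing care is verifying that the forms $\omega_i$ yield polynomial expressions in coordinates, which follows immediately from their multilinearity.
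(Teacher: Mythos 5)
Your proposal is correct and matches the paper's proof, which simply chooses a basis to identify $V$ with $k^d$ and observes that the $\lambda_i$-forms become polynomial maps, leaving the induction on formula structure implicit. You have merely spelled out the routine details the paper elides.
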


\begin{proof}
Choosing a basis, identify $V$ with $k^d$, and regard $K$ as a subset of $k^m \times k^{nd}$. The $\lambda_i$-forms on $V$ amount to polynomial maps $k^d \to k$, which makes the lemma clear.
\end{proof}

\begin{lemma} \label{lem:quant-2}
Let $V$ be a $\ulambda$-space, let $G=\Aut(V)$, and assume $G(\ul{u})$ is linear-oligomorphic for all $\ul{u} \in V^m$. Let $\phi(\ul{\alpha}, \ul{x})$ be an $(m,n)$-formula, and let $L \subset k^m \times V^n$ be the set satisfying $\phi(\ul{\alpha}, \ul{x})$. Let $W$ be a finite dimensional subspace of $V$, and let $K=L \cap (k^m \times W^n)$. Then $K$ is a D-subset of $k^m \times W^n$.
\end{lemma}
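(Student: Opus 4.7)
The plan is to induct on the complexity of the formula $\phi$. The base case (atomic $\phi$) and the propositional and scalar-quantifier cases reduce to standard manipulations of D-sets, so the whole argument rests on handling a vector quantifier, which is where the linear-oligomorphy hypothesis is decisive. The main obstacle will be choosing, uniformly in $\ul{w} \in W^n$, a finite-dimensional subspace of $V$ into which the variable bound by the quantifier can always be moved by an automorphism of $V$.

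For atomic formulas, I would fix a basis of $W$ and observe that any vector term involving variables in $W$ becomes a tuple of scalar polynomials in the free scalars and the coordinates of the vector variables; plugging such tuples into the forms $\omega_i$ yields scalar polynomials (this is essentially Lemma~\ref{lem:quant-1}). Hence the satisfying set is a D-subset. Conjunction, disjunction, and negation preserve the D-set property. For a scalar quantifier, say $\phi(\ul{\alpha}, \ul{x}) = \exists \alpha \, \psi(\alpha, \ul{\alpha}, \ul{x})$, I apply the inductive hypothesis to $\psi$ as an $(m+1,n)$-formula to obtain a D-subset of $k^{m+1} \times W^n$, then project out the extra scalar coordinate; D-sets in the field language are closed under such projections, and the universal case is dual.

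The decisive case is a vector quantifier, $\phi(\ul{\alpha}, \ul{x}) = \exists y \, \psi(\ul{\alpha}, \ul{x}, y)$. The idea is to replace the existential quantifier over all of $V$ by one over a single finite-dimensional subspace $E \supseteq W$ chosen uniformly. Fix a basis $\ul{u}$ of $W$, viewed as a tuple in $V^{\dim W}$. By hypothesis, $G(\ul{u})$ is linear-oligomorphic, so there is a finite-dimensional subspace $E$ of $V$, which we enlarge to contain $W$, such that every vector of $V$ has a $G(\ul{u})$-translate lying in $E$. Every $\ul{w} \in W^n$ is a $k$-linear combination of $\ul{u}$, hence is fixed pointwise by $G(\ul{u})$, so $G(\ul{u}) \subseteq G(\ul{w})$. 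Since $G=\Aut(V)$ preserves the $\ulambda$-structure, every $\cL_{\ulambda}$-formula is $G$-invariant, and so for each $(\ul{a}, \ul{w}) \in k^m \times W^n$ I have the equivalence
\[
\exists v \in V \colon \psi(\ul{a}, \ul{w}, v) \iff \exists v \in E \colon \psi(\ul{a}, \ul{w}, v).
\]
Now apply the inductive hypothesis to $\psi$ (of lower complexity) and to the finite-dimensional subspace $E$: the satisfying set in $k^m \times E^{n+1}$ is a D-subset. Restrict to the linear slice $k^m \times W^n \times E$, then project away the last factor (after fixing a basis of $E$, this is a field-level projection); the result is exactly $K$, now exhibited as a D-subset of $k^m \times W^n$. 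The key subtlety, and where the full strength of the assumption on stabilizers of arbitrary tuples enters, is that $\ul{u}$ is chosen to be a basis of $W$ rather than a specific tuple $\ul{w}$, so that the single subspace $E$ serves every $\ul{w} \in W^n$ at once.
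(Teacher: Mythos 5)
Your proposal is correct and uses the same key idea as the paper: the linear-oligomorphy of the pointwise stabilizer of $W$ lets you move any witness for the vector quantifier into a fixed finite-dimensional subspace $E$ by an automorphism fixing $\ul{w}$, reducing to the finite-dimensional case (Lemma~\ref{lem:quant-1}) plus closure of D-sets under field-level projections. The only difference is organizational: the paper treats just the case $\exists y\,\psi$ with $\psi$ vector-quantifier-free and remarks that the general case is similar, whereas your induction on formula complexity carries out that general case explicitly.
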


\begin{proof}
We prove the proposition assuming that $\phi$ has the form $\exists y (\psi(\ul{\alpha}, \ul{x}, y))$ where $\psi$ has no vector quantifiers. This is sufficient for the application of the lemma, and the proof in the general case is not much different.

Let $H \subset \Aut(V)$ be the subgroup fixing each element of $W$. By assumption, $H$ is a linear-oligomorphic subgroup of $\GL(V)$. Let $W \subset V'$ be a finite dimensional subspace of $V$ such that every $H$-orbit on $V$ meets $V'$. Regard $V'$ as a finite $\ulambda$-space, and let $L' \subset k^m \times (V')^n$ be the set satisfying $\phi$; here the existential quantifier in $\phi$ only ranges over $V'$. The set $L'$ is a D-set by Lemma~\ref{lem:quant-1}.

We claim that $K=L' \cap (k^m \times W^n)$, which will prove that $K$ is a D-set. Since $L' \subset L$, it is clear that the right side is contained in $K$. Suppose now that $(\ul{a}, \ul{v})$ is an element of $K$. Then there is some $w \in V$ such that $\psi(\ul{a}, \ul{v}, w)$ holds. Let $h \in H$ be such that $hw \in V'$. Since $h$ fixes $\ul{v}$ and the veracity of $\psi$ is unaffected by applying $h$, we see that $\psi(\ul{a}, \ul{v}, hw)$ holds. In other words, we can find a witness to $\phi(\ul{a}, \ul{v})$ in $V'$, and so $(\ul{a}, \ul{v})$ belongs to $L'$. This completes the proof.
\end{proof}

Let $\phi(\ul{\alpha}, \ul{x})$ be an $(m,n)$-formula. We say that $\phi$ satisfies (LI) if $\phi(\ul{a}, \ul{v})$ implies that the tuple $(v_1, \ldots, v_n)$ is linearly independent.

\begin{lemma} \label{lem:quant-3}
Let $V$ be a $\ulambda$-space. Suppose every $(m,n)$-formula satisfying (LI) is equivalent modulo $\Th(V)$ to one without vector-quantifiers. Then $V$ has vector-quantifier elimination.
\end{lemma}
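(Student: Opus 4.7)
The plan is to reduce an arbitrary formula to the (LI) case by enumerating all possible linear-dependence patterns among its free vector variables. Concretely, given an $(m,n)$-formula $\phi(\ul{\alpha},\ul{x})$, I would consider for each subset $S=\{i_1<\cdots<i_k\}$ of $\{1,\ldots,n\}$ the substitution replacing $x_{i_\ell}$ by a fresh vector variable $y_\ell$ and replacing each $x_j$ with $j\notin S$ by a linear combination $\sum_{\ell=1}^{k}\beta_{j,\ell}\,y_\ell$ where the $\beta_{j,\ell}$ are fresh scalar variables. Call the resulting formula $\phi_S(\ul{\alpha},\ul{\beta},\ul{y})$. Setting
\[
\phi_S^\ast(\ul{\alpha},\ul{\beta},\ul{y}) \;:=\; \theta_k(\ul{y})\wedge\phi_S(\ul{\alpha},\ul{\beta},\ul{y}),
\]
this is an $(m+k(n-k),k)$-formula satisfying (LI), so by hypothesis it is equivalent modulo $\Th(V)$ to a formula $\psi_S$ with no vector-quantifiers.

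The next step is to verify that
\[
\phi(\ul{\alpha},\ul{x}) \;\iff\; \bigvee_{S\subseteq\{1,\ldots,n\}} \exists\ul{\beta}\Bigl[\,\theta_{|S|}(x_{i_1},\ldots,x_{i_k})\wedge\!\!\bigwedge_{j\notin S}\!\bigl(x_j=\textstyle\sum_\ell \beta_{j,\ell}\,x_{i_\ell}\bigr)\wedge\psi_S(\ul{\alpha},\ul{\beta},x_{i_1},\ldots,x_{i_k})\Bigr].
\]
The forward direction: given $(\ul{\alpha},\ul{x})$ satisfying $\phi$, take $S$ to index any maximal linearly independent subfamily of $\ul{x}$; then the $\beta_{j,\ell}$ expressing the remaining $x_j$ in that basis witness the $S$-disjunct. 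The reverse direction is immediate, since once the conjunction of linear-independence and linear-combination formulas holds, substituting back into $\phi_S$ literally recovers $\phi(\ul{\alpha},\ul{x})$. The right-hand side of the displayed equivalence involves only scalar quantifiers (the $\exists \ul{\beta}$) and, after the replacement $\phi_S^\ast\rightsquigarrow \psi_S$ supplied by the hypothesis, no vector quantifiers whatsoever.

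The edge cases are easy to handle: for $S=\varnothing$ there are no $\beta$'s, $\theta_0$ is vacuous, and the clause reduces to $(\bigwedge_j x_j=\bzero)\wedge\phi(\ul{\alpha},\bzero,\ldots,\bzero)$; the latter conjunct is itself an $(m,0)$-formula, which vacuously satisfies (LI) and therefore falls under the hypothesis. For $n=0$ the statement is the hypothesis. I do not anticipate a serious obstacle here: the hypothesis was designed precisely to handle the one genuinely nontrivial geometric ingredient (eliminating vector quantifiers in the presence of a linearly independent parameter tuple), and the above bookkeeping is a purely syntactic reduction. The only point requiring a little care is ensuring that the substitution and the disjunction over $S$ are written so that the resulting equivalence is an identity of formulas in $\cL_{\ulambda}$ modulo $\Th(V)$, which is straightforward once one fixes the notation for the $\beta_{j,\ell}$.
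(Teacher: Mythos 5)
Your proof is correct and takes essentially the same approach as the paper: case analysis over which subset of the free vector variables forms a basis of their span, with fresh scalar variables expressing the remaining variables as linear combinations, so that each case reduces to an (LI) formula covered by the hypothesis. The paper only writes out the $(0,2)$ case explicitly and notes the general case is the same; your version is that general case spelled out.
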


\begin{proof}
Let $\phi$ be a $(m,n)$-formula. We show that $\phi$ is equivalent to a formula without vector quantifiers. 
We just treat the case $(m,n)=(0,2)$ for notational simplicity; the idea in general is the same. Let $\theta_n$ be the $(0,n)$-formula from Example~\ref{ex:lin-ind} that detects linear independence. We introduce the following three formulas:
\begin{align*}
\phi_1(x,y) &\colon \theta_2(x,y) \land \phi(x,y) \\
\phi_2(\alpha,x) &\colon \theta_1(x) \land \phi(x, \alpha x) \\
\phi_2(\beta,y) & \colon \theta_1(y) \land \phi(\beta y, y)
\end{align*}
We have
\begin{align*}
\phi(x,y) \iff & \phi_1(x,y) \\
& \lor \exists \alpha(y = \alpha x \land \phi_2(\alpha, x)) \\
& \lor \exists \beta(x=\beta y \land \phi_3(\beta, y)) \\
& \lor (x=y=\bzero \land \phi(\bzero, \bzero)).
\end{align*}
This essentially breaks up $\phi(x,y)$ into cases depending on the possible linear dependencies between $x$ and $y$. The formulas $\phi_1$, $\phi_2$, and $\phi_3$ satisfy (LI), and are therefore equivalent to formulas without vector-quantifiers. By the above, we see that $\phi$ is also equivalent to a formula without vector-quantifiers.
\end{proof}

\begin{proof}[Proof of Theorem~\ref{thm:quant}]
Let $\phi(\ul{\alpha}, \ul{x})$ be an $(m,n)$-formula satisfying (LI). We show that $\phi$ is equivalent to a formula having no vector-quantifiers. This will establish the theorem by Lemma~\ref{lem:quant-3}. We note that it suffices (by induction on the number of vector quantifiers) to treat the case where $\phi$ has the form $\exists y(\psi(\ul{\alpha}, \ul{x}, y))$ where $\psi$ has no vector quantifiers, though we do not use this.

Let $V=V_{\ulambda}$ and $G=G_{\ulambda}$. Let $\pi_0 \colon V^{[n]} \to X$ be the classifying map (see \S \ref{ss:class}), with $X=\bS_{\ulambda}(k^n)^*$. Let $\pi \colon k^m \times V^{[n]} \to k^m \times X$ be the map $\id \times \pi_0$. We use Proposition~\ref{prop:class} without mention in what follows. Let $W$ be a finite dimensional subspace of $V$ such that every $G$-orbit on $V^{[n]}$ meets $W^{[n]}$. This exists since $G$ is linear-oligomorphic (Theorem~\ref{thm:olig}).

Let $L \subset k^m \times V^{[n]}$ be the set satisfying $\phi$, and let $K=L \cap (k^m \times W^{[n]})$. The set $K$ is a D-set by Lemma~\ref{lem:quant-2}; note that $G$ satisfies the assumption of that lemma by Proposition~\ref{prop:olig2}. Thus $K'=\pi(K)$ is a D-subset of $k^m \times X$. Let $\phi'(\ul{\alpha}, \ul{x})$ be a formula without vector quantifiers expressing that $\ul{x}$ is linear independent and $\pi(\ul{\alpha}, \ul{x}) \in K'$.

Let $(\ul{a}, \ul{v}) \in k^m \times V^{[n]}$. We claim that $\phi(\ul{a}, \ul{v})$ holds if and only if $\phi'(\ul{a}, \ul{v})$ holds; this will prove the theorem. First suppose that $\phi(\ul{a}, \ul{v})$ holds. Let $g \in G$ be an element such that $g \ul{v} \in W^{[n]}$.  Since $\phi$ is $G$-invariant, it follows that $\phi(\ul{a}, g\ul{v})$ holds, and so $(\ul{a}, g\ul{v}) \in K$. Hence $\pi(\ul{a}, g\ul{v}) \in K'$. But $\pi$ is $G$-invariant, and so $\pi(\ul{a}, \ul{v}) \in K'$, that is, $\phi'(\ul{a}, \ul{v})$ holds.

Now suppose that $\phi'(\ul{a}, \ul{v})$ holds, meaning $\pi(\ul{a}, \ul{v}) \in K'$. It follows that there is some $(\ul{b}, \ul{w}) \in K$ such that $\pi(\ul{a}, \ul{v})=\pi(\ul{b}, \ul{w})$, i.e., $\ul{a}=\ul{b}$ and $\pi_0(\ul{v})=\pi_0(\ul{w})$. Since $\pi_0$ is injective modulo $G$, there is some $g \in G$ such that $\ul{v}=g \ul{w}$. Since $\phi(\ul{b}, \ul{w})$ holds and $\phi$ is $G$-invariant, it follows that $\phi(\ul{a}, \ul{v})$ holds. This completes the proof.
\end{proof}

\begin{remark}
Suppose $k$ has quantifier elimination, e.g., $k$ is algebraically closed or real closed (Tarski). Then the D-set $K'$ in the above proof can be described in the language of fields without quantifiers, and so $\phi'$ can be chosen without quantifiers. This shows that we only need quantifiers to test linear independence.

We can reformulate this as follows. Let $\cL'_{\ulambda}$ be the language obtained from $\cL_{\ulambda}$ by adding relation symbols that test for linear independence and function symbols that give the coefficients when expressing one vector as a linear combination of a tuple of linear independent vectors. Then the theory of $V_{\ulambda}$ over the language $\cL'_{\ulambda}$ has quantifier elimination for vectors and scalars.
\end{remark}

\begin{remark} \label{rmk:decidable}
Assume $k=\ol{\bQ}$. One can then show that $\Th(V_{\ulambda})$ is decidable. The key point is that everything in the proof of Theorem~\ref{thm:quant} is effective, and so we have effective elimination of quantifiers. Perhaps the most subtle point is the construction of $W$, and what it even means to do computations in $V_{\ulambda}$. For this, we must look at the construction of the Fra\"iss\'e limit in \S \ref{ss:fraisse}. In the case of $\ulambda$-spaces, these constructions are effective; this hinges on the explicit construction of universal $\ulambda$-spaces. This construction therefore produces a model of $V_{\ulambda}$ in which we can do computations. To find $W$, one finds an embedding of a finite $n$-universal space into $V_{\ulambda}$.
\end{remark}

\begin{remark} \label{rmk:qe-converse}
There is a converse to Theorem~\ref{thm:quant}: if $V$ is a $\ulambda$-space with vector-quantifier elimination then $V$ is homogeneous. We sketch the proof. Let $V$ be given. Suppose $\phi(\ul{x})$ is an $(0,n)$-formula with no vector quantifiers. If $\ul{v} \in V^{[n]}$ then the veracity of $\phi(\ul{v})$ depends only on $\pi(\ul{v}) \in X$. It follows that $\pi(\ul{v})$ completely determines the type $t(\ul{v})$, and so (as in \S \ref{ss:homo-type}) every linearly independent type in $V$ is principal. Suppose $\ul{v}, \ul{w} \in V^{[n]}$ satisfy $\pi(\ul{v})=\pi(\ul{w})$, i.e., we have an isomorphism $\operatorname{span}(\ul{v}) \to \operatorname{span}(\ul{w})$ of pinned $\ulambda$-spaces. The proof of Proposition~\ref{prop:atomic} shows that we can find an automorphism $g$ of $V$ such that $g \ul{v}=\ul{w}$. This shows that $V$ is homogeneous.
\end{remark}

\appendix
\section{Fra\"iss\'e theory} \label{s:fraisse}

\subsection{Overview}

Fra\"iss\'e \cite{Fraisse} proved an important theorem that, roughly speaking, explains when a collection of finite relational structures can be assembled to a countable homogeneous structure. See \cite[\S 2.6]{Cameron} for an expository treatment. We want to apply Fra\"iss\'e's theorem to construct homogeneous $\ulambda$-spaces. However, since $\ulambda$-spaces are not finite, and may not even be countable, the classical form of the theorem does not apply. In this appendix, we formulate a generalization of Fra\"iss\'e's theorem that applies in our setting. We use the language of category theory since it seems to be the most flexible and convenient.

We do not claim any originality here: categorical formulations of Fra\"iss\'e's theorem have been known since the work of Droste--G\"obel \cite{DrosteGobel1,DrosteGobel2}, and have appeared in more recent work as well \cite{Caramello,Irwin,Kubis}. We have included this material simply for the convenience of the reader.

\subsection{Ind-objects} \label{ss:ind}

Fix, throughout \S \ref{s:fraisse}, a category $\cC$ in which all morphisms are monic. We often refer to morphisms in $\cC$ as embeddings. The main case to keep in mind is where $\cC$ is a category of finite $\ulambda$-spaces.

An \emph{ind-object} of $\cC$ is a diagram
\begin{displaymath}
X_1 \to X_2 \to X_3 \to \cdots
\end{displaymath}
More formally, an ind-object is a pair $X=(X_{\bullet}, \epsilon_{\bullet,\bullet})$ where $X_i$ is an object of $\cC$ for $i \ge 1$, and $\epsilon_{i,j} \colon X_i \to X_j$ is a morphism for $i \le j$, such that $\epsilon_{i,i}$ is the identity and $\epsilon_{j,k} \circ \epsilon_{i,j} = \epsilon_{i,k}$. Of course, one can consider ind-objects indexed by other filtered categories, but we do not here.

Let $Y=(Y_{\bullet}, \delta_{\bullet,\bullet})$ be a second ind-object. A \emph{morphism} of ind-objects $\alpha \colon X \to Y$ is specified by a non-decreasing function $a \colon \bZ_+ \to \bZ_+$ and morphisms $\alpha_i \colon X_i \to Y_{a(i)}$ for each $i \in \bZ_+$ such that $\alpha_j \circ \epsilon_{i,j} = \delta_{a(i),a(j)} \circ \alpha_i$ for all $i \le j$. Suppose $b \colon \bZ_+ \to \bZ_+$ is a second non-decreasing function such that $b(i) \ge a(i)$ for all $i$. Define $\beta_i \colon X_i \to Y_{b(i)}$ by $\beta_i=\delta_{a(i),b(i)} \circ \alpha_i$. Then $(a,\alpha_{\bullet})$ and $(b,\beta_{\bullet})$ represent the same morphism $X \to Y$.

An object $X$ of $\cC$ is identified with the constant ind-object $X \to X \to \cdots$, where all transition maps are the identity. In this way we can talk about embeddings $X \to Y$ where $Y$ is an ind-object. Any such embedding factors through $Y_n$ for some $n$.

\begin{example}
Suppose $\cC$ is the category of finite dimensional vector spaces, with morphisms being injective linear maps. If $X$ is an ind-object in $\cC$ then we can associate to it the vector space $\Phi(X) = \varinjlim X_n$, which has dimension $\le \aleph_0$. If $Y$ is a second ind-object then giving an embedding $X \to Y$ of ind-objects is equivalent to giving an injective linear map $\Phi(X) \to \Phi(Y)$. In this way, $\Phi$ provides an equivalence between the category of ind-objects in $\cC$ and the category of vector spaces of dimension $\le \aleph_0$ (with injective maps).
\end{example}

\begin{example} \label{ex:ind-lambda}
Let $\cC=\cC^{\rf}_{\ulambda}$ be the category of finite $\ulambda$-spaces. Then, just as above, ind-objects in $\cC^{\rf}_{\ulambda}$ are equivalent to $\ulambda$-spaces of dimension $\le \aleph_0$.
\end{example}

\subsection{Universal objects}

We now introduce an important class of ind-objects:

\begin{definition}
An ind-object $\Omega$ of $\cC$ is \emph{universal} if every object of $\cC$ embeds into $\Omega$.
\end{definition}

We now establish a result that characterizes when a universal ind-object exists. To this end, we introduce the following conditions on $\cC$:
\begin{itemize}[leftmargin=1.5cm]
\item[(CC)] \defn{Countable cofinality}: there is a cofinal sequence of objects, i.e., there are objects $\{X_n\}_{n \ge 1}$ such that for any object $Y$ there is an embedding $Y \to X_n$ for some $n$.
\item[(JEP)] \defn{Joint embedding property}: given objects $X$ and $Y$, there is an object $Z$ and embeddings $X \to Z$ and $Y \to Z$.
\end{itemize}
The result is the following:

\begin{proposition} \label{A:univ}
The category $\cC$ has a universal ind-object if and only if it satisfies (CC) and (JEP).
\end{proposition}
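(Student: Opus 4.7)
The plan is to prove each implication directly, with the forward direction being essentially a factoring argument and the reverse direction a standard inductive construction combining a cofinal sequence with repeated applications of JEP.

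For the forward direction, suppose $\Omega = (X_1 \to X_2 \to \cdots)$ is a universal ind-object. I would verify (CC) by showing that the sequence $(X_n)_{n \ge 1}$ is itself cofinal: given any object $Y$ of $\cC$, universality supplies an embedding $Y \to \Omega$, and by the discussion in \S \ref{ss:ind} any embedding from an object of $\cC$ into an ind-object factors through some $X_n$. For (JEP), given objects $X$ and $Y$, universality provides embeddings $X \to \Omega$ and $Y \to \Omega$; these factor through $X_m$ and $X_{m'}$ respectively, and taking $Z = X_{\max(m,m')}$ together with the transition morphisms gives the required common extension.

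For the reverse direction, assume (CC) and (JEP). Let $(Y_n)_{n \ge 1}$ be a cofinal sequence. I would inductively build an ind-object $\Omega = (X_1 \to X_2 \to \cdots)$ as follows: set $X_1 = Y_1$, and given $X_n$, apply (JEP) to the pair $(X_n, Y_{n+1})$ to produce an object $X_{n+1}$ together with embeddings $\epsilon_{n,n+1} \colon X_n \to X_{n+1}$ and $\iota_{n+1} \colon Y_{n+1} \to X_{n+1}$. Compose the $\epsilon_{i,i+1}$ to define the remaining transition morphisms. To see $\Omega$ is universal, let $Y$ be any object of $\cC$; by cofinality there is an embedding $Y \to Y_n$ for some $n$, and composing with $\iota_n \colon Y_n \to X_n$ and the inclusion $X_n \to \Omega$ gives an embedding $Y \to \Omega$.

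Neither direction presents a real obstacle; the only points requiring any care are the factorization of embeddings from an object into an ind-object (which is built into the definitions in \S \ref{ss:ind}) and ensuring that in the inductive construction each step of (JEP) produces honest embeddings, which is guaranteed by our standing hypothesis that all morphisms of $\cC$ are monic. The argument is essentially a categorical restatement of the classical construction of a Fra\"iss\'e limit's universality, with ``finite structure'' replaced by ``object of $\cC$'' and ``countable chain'' by ``ind-object.''
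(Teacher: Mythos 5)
Your proof is correct and follows essentially the same route as the paper's: for the forward direction you extract (CC) from the terms of the universal ind-object and get (JEP) by pushing two objects into a common $X_{\max(m,m')}$ via transition maps, and for the reverse direction you run the same inductive (JEP)-based construction interleaving a cofinal sequence. No gaps.
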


\begin{proof}
Suppose $\cC$ satisfies (CC) and (JEP). Let $\{X_n\}_{n \ge 1}$ be a cofinal sequence of objects as in (CC). Now choose a diagram
\begin{displaymath}
\xymatrix{
\Omega_1 \ar[r] & \Omega_2 \ar[r] & \Omega_3 \ar[r] & \cdots \\
X_1 \ar[u] & X_2 \ar[u] & X_3 \ar[u] }
\end{displaymath}
We can construct such a diagram as follows. Take $\Omega_1=X_1$. Having defined $\Omega_{n-1}$, define $\Omega_n$ to be an object into which both $\Omega_{n-1}$ and $X_n$ embed; this exists by (JEP). Then $\Omega$ is an ind-object, and clearly universal: indeed, any object embeds into some $X_n$, which in turn embeds into $\Omega$.

Now suppose that $\cC$ has a universal ind-object $\Omega$. Given any object $X$, there is an embedding $X \to \Omega$, which factors through some $\Omega_n$. Thus any object embeds into some $\Omega_n$, and so (CC) holds. In fact, since $\Omega_n$ embeds into $\Omega_m$ for any $m>n$, we see that any $X$ embeds into $\Omega_n$ for any $n \gg 0$, from which (JEP) follows.
\end{proof}

\subsection{Homogeneous and f-injective objects}

We now introduce two additional important classes of ind-objects:

\begin{definition}
An ind-object $\Omega$ is \defn{homogeneous} if the following condition holds: given objects $X$ and $Y$ of $\cC$, embeddings $\alpha \colon X \to \Omega$ and $\beta \colon Y \to \Omega$, and an isomorphism $\gamma \colon X \to Y$, there exists an automorphism $\sigma \colon \Omega \to \Omega$ such that the diagram
\begin{displaymath}
\xymatrix{
\Omega \ar[r]^{\sigma} & \Omega \\
X \ar[u]^{\alpha} \ar[r]^{\gamma} & Y \ar[u]_{\beta} }
\end{displaymath}
commutes.
\end{definition}

\begin{definition}
An ind-object $\Omega$ is \defn{f-injective} if the following condition holds: given an embedding $\alpha \colon X \to Y$ in $\cC$ and an embedding $\gamma \colon X \to \Omega$, there exists an embedding $\beta \colon Y \to \Omega$ making the diagram
\begin{displaymath}
\xymatrix@C=4em{
& \Omega \\
X \ar[r]^{\alpha} \ar[ru]^{\gamma} & Y \ar[u]_{\beta} }
\end{displaymath}
commute.
\end{definition}

We note that if $\cC$ has an initial object then any f-injective ind-object is automatically universal: in the above diagram, take $X$ to be an initial object and $Y$ to be an arbitrary object. The following proposition connects the above concepts:

\begin{proposition} \label{prop:finj}
We have the following:
\begin{enumerate}
\item A universal ind-object is f-injective if and only if it is homogeneous.
\item Any two universal homogeneous ind-objects are isomorphic.
\item Every ind-object embeds into a universal homogeneous ind-object.
\end{enumerate}
\end{proposition}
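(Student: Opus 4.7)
All three parts flow from the back-and-forth method, exploiting the interplay between universality, f-injectivity, and the ind-structure. I begin with the easy direction of (a): universal plus homogeneous implies f-injective. Given $\alpha \colon X \to Y$ in $\cC$ and $\gamma \colon X \to \Omega$, I use universality to produce some embedding $\iota \colon Y \to \Omega$, so that $\iota \alpha$ and $\gamma$ are two embeddings $X \to \Omega$. Applying homogeneity to them with $\id_X$ as the isomorphism yields an automorphism $\sigma$ of $\Omega$ with $\sigma \iota \alpha = \gamma$, and then $\beta := \sigma \iota$ solves the lifting problem.

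The substantive content of (a) is the converse, for which the plan is a zigzag. Given $\alpha \colon X \to \Omega$, $\beta \colon Y \to \Omega$, and an isomorphism $\gamma \colon X \to Y$, I will produce an automorphism $\sigma$ with $\sigma \alpha = \beta \gamma$ as a direct limit of partial maps. First, choose $a_0$ so that $\alpha$ factors as $X \xrightarrow{\alpha'} \Omega_{a_0} \hookrightarrow \Omega$, apply f-injectivity to the embedding $\alpha'$ (in $\cC$) and the map $\beta \gamma \colon X \to \Omega$, and factor the result through some $\Omega_{b_0}$ to obtain $\sigma_0 \colon \Omega_{a_0} \to \Omega_{b_0}$ intertwining $\alpha$ with $\beta \gamma$. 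Inductively, given $\sigma_k \colon \Omega_{a_k} \to \Omega_{b_k}$, apply f-injectivity to the embedding $\sigma_k$ (in $\cC$) together with the inclusion $\Omega_{a_k} \hookrightarrow \Omega$, factor through some $\Omega_{a_{k+1}}$ with $a_{k+1} > a_k$, and obtain $\tau_k \colon \Omega_{b_k} \to \Omega_{a_{k+1}}$ with $\tau_k \sigma_k$ equal to the canonical inclusion $\Omega_{a_k} \to \Omega_{a_{k+1}}$. A symmetric step then produces $\sigma_{k+1} \colon \Omega_{a_{k+1}} \to \Omega_{b_{k+1}}$ with $b_{k+1} > b_k$ and $\sigma_{k+1} \tau_k$ equal to the canonical inclusion. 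Passing to the direct limit yields mutually inverse endomorphisms $\sigma$ and $\tau$ of the ind-object $\Omega$, and the initial step guarantees $\sigma \alpha = \beta \gamma$.

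For (b), I run essentially the same zigzag, but now between two universal homogeneous ind-objects $\Omega$ and $\Omega'$; both are f-injective by (a), so the inductive step goes through symmetrically and the limits define mutually inverse morphisms of ind-objects, with no initial constraint needed. For (c), given any ind-object $X = \{X_n\}$ and a universal homogeneous $\Omega$, I will inductively construct compatible embeddings $\alpha_n \colon X_n \to \Omega$: pick $\alpha_1$ by universality, and given $\alpha_n$ extend along the transition $X_n \to X_{n+1}$ using f-injectivity of $\Omega$ (which holds by (a)). The family $\{\alpha_n\}$ assembles into an embedding of ind-objects.

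The main obstacle will be the bookkeeping in the zigzag for the converse direction of (a). The key subtlety is that the indexing sequences $a_k$ and $b_k$ must each tend to infinity; otherwise the direct limits only define self-embeddings of $\Omega$ and not automorphisms. I will enforce this by inserting extra inclusion steps so that $a_{k+1} > a_k$ and $b_{k+1} > b_k$ at every stage, and then the compatibilities $\tau_k \sigma_k = {}$inclusion and $\sigma_{k+1} \tau_k = {}$inclusion imply in the limit that $\tau$ is a two-sided inverse to $\sigma$.
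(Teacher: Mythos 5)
Your proof is correct and follows essentially the same route as the paper's: the easy direction of (a) by combining universality with homogeneity, the converse of (a) and part (b) via the back-and-forth zigzag using f-injectivity (which the paper isolates as a standalone back-and-forth lemma applied to both), and (c) by iterated extension along the transition maps. Your explicit care that the index sequences $a_k, b_k$ be cofinal is a point the paper leaves implicit, and is indeed needed for the limit maps to be morphisms of ind-objects.
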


We first prove a lemma that abstracts the classical back and forth argument.

\begin{lemma} \label{lem:back-forth}
Let $\Omega$ and $\Omega'$ be f-injective, let $\delta \colon X \to \Omega$ and $\delta' \colon X' \to \Omega'$ be embeddings with $X$ and $X'$ in $\cC$, and let $\gamma \colon X \to X'$ be an embedding. Then there exists an isomorphism $\alpha \colon \Omega \to \Omega'$ such that $\alpha \circ \delta = \delta' \circ \gamma$.
\end{lemma}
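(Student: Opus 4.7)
The plan is to execute the classical back-and-forth argument in this categorical ind-object setting. Writing $\Omega=(A_n,f_{n,n'})$ and $\Omega'=(B_n,g_{n,n'})$, I would construct cofinal subsequences of indices $n_0\le n_1\le\cdots$ and $m_0\le m_1\le\cdots$ together with embeddings $\alpha_i\colon A_{n_i}\to B_{m_i}$ and $\beta_i\colon B_{m_i}\to A_{n_{i+1}}$ in $\cC$ satisfying the zig-zag compatibility relations $\beta_i\alpha_i=f_{n_i,n_{i+1}}$ and $\alpha_{i+1}\beta_i=g_{m_i,m_{i+1}}$. Passing to the colimit of this diagram then yields an isomorphism $\alpha\colon\Omega\to\Omega'$, and the initial compatibility built into the base step will guarantee $\alpha\circ\delta=\delta'\circ\gamma$.

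To initialize, I would pick $n_0$ large enough that $\delta$ factors as $\tilde\delta\colon X\to A_{n_0}$, then apply f-injectivity of $\Omega'$ to the embedding $\tilde\delta$ (in $\cC$) and to the embedding $\delta'\circ\gamma\colon X\to\Omega'$: this produces an embedding $A_{n_0}\to\Omega'$ extending $\delta'\gamma$, which factors through some $B_{m_0}$, yielding $\alpha_0$. For the inductive step, given $\alpha_i\colon A_{n_i}\to B_{m_i}$, I would apply f-injectivity of $\Omega$ to $\alpha_i$ and to the natural inclusion $A_{n_i}\hookrightarrow\Omega$, producing an embedding $B_{m_i}\to\Omega$ that factors through some $A_{n_{i+1}}$; this is $\beta_i$. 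The identity $\beta_i\alpha_i=f_{n_i,n_{i+1}}$ is then forced: both sides agree as ind-morphisms into $\Omega$, and monicness of the transition maps $f_{n_{i+1},N}$ promotes this colimit equality to equality in $\cC$. Symmetrically, I would produce $\alpha_{i+1}$ from $\beta_i$ via f-injectivity of $\Omega'$. At each step the indices $n_{i+1},m_{i+1}$ may be freely enlarged, so by the standard bookkeeping I would arrange $n_i,m_i\ge i$ to ensure cofinality of both subsequences.

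The whole construction is standard, so the main obstacle is bookkeeping rather than substance. Specifically, f-injectivity is phrased in terms of embeddings into the ind-object $\Omega$ rather than into specific stages $A_n$, so one must systematically factor each output map through a sufficiently late stage and invoke monicness of the transition maps to promote equalities of ind-morphisms into $\Omega$ or $\Omega'$ to honest equalities in $\cC$. Once these conventions are fixed, the zig-zag runs unchanged from the classical Fra\"iss\'e case.
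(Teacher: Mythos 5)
Your proposal is correct and follows essentially the same back-and-forth argument as the paper: alternate applications of f-injectivity of $\Omega$ and $\Omega'$ produce a zig-zag of embeddings between cofinal subsequences of stages whose colimit is the desired isomorphism. The only cosmetic difference is in the initialization (the paper relabels so that $X=\Omega_1$ and takes $\gamma$ itself as the first map, whereas you factor $\delta$ through a stage and apply f-injectivity of $\Omega'$ once more), and your explicit remarks about factoring through stages and using monicness of transition maps are exactly the bookkeeping the paper leaves implicit.
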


\begin{proof}
Let $\Omega=(\Omega_1 \to \Omega_2 \to \cdots)$. The map $\delta \colon X \to \Omega$ factors through some $\Omega_n$. Relabeling, we may as well assume that $\Omega_1=X$ and $\delta=\epsilon_{1,2}$ is the first transition map in $\Omega$. Similarly for $X' \to \Omega'$. Thus $\gamma$ is an embedding $\alpha_1 \colon \Omega_1 \to \Omega_1'$, and we want to extend $\alpha_1$ to an isomorphism $\alpha \colon \Omega \to \Omega'$.

Put $n(1)=m(1)=1$. Since $\Omega$ is f-injective, the embedding $\Omega_{n(1)} \to \Omega$ extends along $\alpha_1$. We can thus find an embedding $\beta_1 \colon \Omega'_{n(1)} \to \Omega_{n(2)}$ such that $\beta_1 \alpha_1=\epsilon_{n(1),n(2)}$. Since $\Omega'$ is f-injective, the embedding $\Omega'_{m(1)} \to \Omega'$ extends along $\beta_1$. We can thus find a map $\alpha_2 \colon \Omega_{n(2)} \to \Omega'_{m(2)}$ such that $\alpha_2 \beta_1 = \epsilon'_{m(1),m(2)}$. Continuing in this way, we obtain a commutative diagram
\begin{displaymath}
\xymatrix@C=4em{
\Omega_{n(1)} \ar[r]^{\epsilon} \ar[d]^{\alpha_1} & \Omega_{n(2)} \ar[r]^{\epsilon} \ar[d]^{\alpha_2} & \Omega_{n(3)} \ar[r]^{\epsilon} \ar[d]^{\alpha_3} & \cdots \\
\Omega'_{m(1)} \ar[r]^{\epsilon'} \ar[ru]^{\beta_1} & \Omega'_{m(2)} \ar[r]^{\epsilon'} \ar[ru]^{\beta_2} & \Omega'_{m(3)} \ar[r]^{\epsilon'} \ar[ru]^{\beta_3} & \cdots }
\end{displaymath}
Thus $\alpha \colon \Omega \to \Omega'$ extends $\alpha_1$, and is an isomorphism with inverse $\beta$.
\end{proof}

\begin{proof}[Proof of Proposition~\ref{prop:finj}]
(a) Suppose $\Omega$ is an ind-object that is universal and homogeneous. We show that $\Omega$ is f-injective. Let $\alpha \colon X \to Y$ and $\gamma \colon X \to \Omega$ be given. Since $\Omega$ is universal, there exists an embedding $\beta' \colon Y \to \Omega$. Since $\Omega$ is homogeneous, the two embeddings $\gamma$ and $\beta' \alpha$ of $X$ differ by an automorphism, that is, there is an automorphism $\sigma$ of $\Omega$ such that $\sigma \beta' \alpha=\gamma$. Taking $\beta=\sigma \beta'$ thus gives $\gamma=\beta \alpha$, and so $\Omega$ is f-injective.

The converse follows from Lemma~\ref{lem:back-forth}; in fact, this lemma shows that any f-injective object is homogeneous.

(b) Let $\Omega$ and $\Omega'$ be universal and f-injective. Since $\Omega'$ is universal, there is an embedding $\Omega_1 \to \Omega'$. Lemma~\ref{lem:back-forth} implies that this extends to an isomorphism $\Omega \to \Omega'$.

(c) Let $\Omega$ be a universal homogeneous ind-object and let $\Xi$ be another ind-object. Since $\Omega$ is universal, there is an embedding $\Xi_1 \to \Omega$. Since $\Omega$ is f-injective, this embedding extends to an embedding $\Xi_2 \to \Omega$, which in turn extends to an embedding $\Xi_3 \to \Omega$, and so on. In this way, we inductively construct an embedding $\Xi \to \Omega$.
\end{proof}

\subsection{Fra\"iss\'e categories}

Motivated by the importance of homogeneous objects, we introduce the following terminology:

\begin{definition}
We say that $\cC$ is a \defn{Fra\"iss\'e category} if it has a universal homogeneous ind-object.
\end{definition}

We prove one simply permanence property of these categories here. Recall from \S \ref{ss:coslice} the notion of the coslice category $\CS_X(\cC)$.

\begin{proposition} \label{A:coslice}
Let $\cC$ be a Fra\"iss\'e category and let $X$ be an object of $\cC$. Then the coslice category $\CS_X(\cC)$ is also Fra\"iss\'e.
\end{proposition}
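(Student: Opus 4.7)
The plan is to produce the required universal homogeneous ind-object in $\CS_X(\cC)$ directly from the one in $\cC$. Let $\Omega$ be a universal homogeneous ind-object of $\cC$, which exists by hypothesis. Since $\Omega$ is universal, $X$ embeds into $\Omega$; fix such an embedding $\iota \colon X \to \Omega$. After shifting indices (i.e., discarding finitely many initial terms of $\Omega$), we may assume $\iota$ factors through $\Omega_1$, so that $(\Omega, \iota)$ is naturally an ind-object of $\CS_X(\cC)$. I claim this is a universal homogeneous ind-object in $\CS_X(\cC)$.

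For universality, let $(Y, \beta)$ be an object of $\CS_X(\cC)$, so $\beta \colon X \to Y$ is an embedding in $\cC$. By Proposition~\ref{prop:finj}(a), $\Omega$ is f-injective; applied to $\beta$ and $\iota$, this produces an embedding $\phi \colon Y \to \Omega$ with $\phi \circ \beta = \iota$. That is precisely an embedding $(Y, \beta) \to (\Omega, \iota)$ in $\CS_X(\cC)$.

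For homogeneity, suppose $(Y, \beta)$ and $(Y', \beta')$ are objects of $\CS_X(\cC)$, that $\alpha \colon (Y,\beta) \to (\Omega,\iota)$ and $\alpha' \colon (Y',\beta') \to (\Omega,\iota)$ are embeddings, and that $\gamma \colon (Y,\beta) \to (Y',\beta')$ is an isomorphism. Ignoring the coslice structure, $\alpha$ and $\alpha' \circ \gamma$ are two embeddings of $Y$ into $\Omega$, and $\gamma$ is an isomorphism of $Y$ with $Y'$. By homogeneity of $\Omega$, there exists an automorphism $\sigma$ of $\Omega$ with $\sigma \circ \alpha = \alpha' \circ \gamma$. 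Because $\gamma \circ \beta = \beta'$ and $\alpha' \circ \beta' = \iota = \alpha \circ \beta$, we compute
\begin{displaymath}
\sigma \circ \iota = \sigma \circ \alpha \circ \beta = \alpha' \circ \gamma \circ \beta = \alpha' \circ \beta' = \iota,
\end{displaymath}
so $\sigma$ is an automorphism of $(\Omega, \iota)$ in $\CS_X(\cC)$, as required.

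The only genuinely delicate point is the indexing/reindexing step in setting up $(\Omega, \iota)$ as an ind-object of $\CS_X(\cC)$: an object of the latter category must come equipped with a map from $X$, whereas $\iota$ only factors through some $\Omega_n$. The fix is routine (truncate $\Omega$ or, equivalently, compose $\iota$ with the transition maps), but it is worth flagging because the rest of the argument reads cleanly only after one recognizes that universality and homogeneity in $\CS_X(\cC)$ are equivalent to the corresponding conditions in $\cC$ together with preservation of $\iota$, which is exactly what the f-injectivity and homogeneity of $\Omega$ supply.
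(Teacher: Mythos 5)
Your proof is correct and follows essentially the same route as the paper: both construct the ind-object $(\Omega,\iota)$ from a universal homogeneous $\Omega$ via an embedding of $X$ (with the same reindexing step), and both use f-injectivity of $\Omega$ to get universality in $\CS_X(\cC)$. The only divergence is the final step: the paper verifies that $(\Omega,\iota)$ is f-injective and relies on Proposition~\ref{prop:finj}(a), whereas you verify homogeneity directly by checking that the automorphism $\sigma$ produced by homogeneity of $\Omega$ satisfies $\sigma\circ\iota=\iota$ --- both are valid and equally short.
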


\begin{proof}
Let $\Omega$ be a universal homogeneous ind-object of $\cC$. Since $\Omega$ is universal, there is an embedding $\alpha \colon X \to \Omega$. Reindexing, we assume that $\alpha=\alpha_1$ maps $X$ into $\Omega_1$. We let $\alpha_n \colon X \to \Omega_n$ be the composition of $\alpha_1$ with the transition map $\epsilon_{1,n} \colon \Omega_1 \to \Omega_n$. Thus each $(\Omega_n, \alpha_n)$ is an object of $\CS_X(\cC)$, and collectively they form an ind-object of $\CS_X(\cC)$, which we denote simply by $(\Omega, \alpha)$.

We claim that $(\Omega, \alpha)$ is a universal homogeneous ind-object of $\CS_X(\cC)$; this will complete the proof. Let $(Y, \beta)$ be an object of $\CS_X(\cC)$, where $\beta \colon X \to Y$ is a morphism in $\cC$. Since $\Omega$ is f-injective (as an ind-object of $\cC$), we can find a morphism $\gamma \colon Y \to \Omega$ such that $\gamma \circ \beta = \alpha$. Thus $\gamma$ defines a morphism $(Y,\beta) \to (\Omega,\alpha)$ in (the ind-category of) $\CS_X(\cC)$, which shows that $(\Omega, \alpha)$ is universal.

Finally, we show that $(\Omega, \alpha)$ is f-injective. Thus suppose we have maps $\phi \colon (Y, \beta) \to (\Omega, \alpha)$ and $\psi \colon (Y, \beta) \to (Z, \gamma)$. Unraveling all of this, we have a commutative diagram
\begin{displaymath}
\xymatrix@C=4em{
&& \Omega \\
X \ar@/^1.0pc/[rru]^{\alpha} \ar@/_1.0pc/[rrd]_{\gamma} \ar[r]^{\beta} & Y \ar[ru]^{\phi} \ar[rd]_{\psi} \\
&& Z \ar@{..>}[uu]_{\rho} }
\end{displaymath}
Since $\Omega$ is f-injective in $\cC$, we can find a morphism $\rho$ making the rightmost triangle (and thus the whole diagram) commute. It is clear that $\rho$ defines a morphism $(Z, \gamma) \to (\Omega, \alpha)$ such that $\rho \circ \psi = \phi$, which shows that $(\Omega, \alpha)$ is f-injective.
\end{proof}

\subsection{Fra\"iss\'e's theorem} \label{ss:fraisse}

We now prove a version of Fra\"iss\'es theorem, which characterizes Fra\"iss\'e categories. For this, we introduce two more conditions on $\cC$:
\begin{itemize}[leftmargin=1.5cm]
\item[(RCC)] \defn{Relative countable cofinality}: for any object $X$ of $\cC$ there exists a cofinal sequence of morphisms out of $X$, i.e., there is a sequence of morphisms $\{\alpha_n \colon X \to Y_n \}_{n \ge 1}$ such that if $\beta \colon X \to Y$ is any morphism then there is a morphism $\gamma \colon Y \to Y_n$ for some $n$ such that $\gamma \circ \beta = \alpha_n$.
\item[(AP)] \defn{Amalgamation property}: given embeddings $W \to X$ and $W \to Y$ there exists a commutative diagram
\begin{displaymath}
\xymatrix{
X \ar[r] & Z \\
W \ar[u] \ar[r] & Y \ar[u] }
\end{displaymath}
for some object $Z$.
\end{itemize}
If $\cC$ has an initial object then (CC) is a special case of (RCC), and (JEP) is a special case of (AP). We note that (RCC) holds for $\cC$ if and only if (CC) holds for $\CS_X(\cC)$ for all $X$; similarly, (AP) holds for $\cC$ if and only if (JEP) holds for $\CS_X(\cC)$ for all $X$.

The following is our categorical version of Fra\"iss\'e's theorem:

\begin{theorem} \label{A:fraisse}
The following are equivalent:
\begin{enumerate}
\item $\cC$ is a Fra\"iss\'e category, i.e., it has a universal homogeneous ind-object.
\item $\cC$ satisfies (CC), (RCC), (JEP), and (AP).
\item $\cC$ and each of its coslice categories $\CS_X(\cC)$ has a universal ind-object.
\end{enumerate}
\end{theorem}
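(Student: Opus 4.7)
The plan is to prove (a)$\Rightarrow$(c), (c)$\Leftrightarrow$(b), and (b)$\Rightarrow$(a). The equivalence (b)$\Leftrightarrow$(c) is immediate from Proposition~\ref{A:univ} together with the remark preceding the theorem: $\cC$ has a universal ind-object iff (CC) and (JEP) hold for $\cC$, and each $\CS_X(\cC)$ has a universal ind-object iff (CC) and (JEP) hold there, and those coslice conditions are exactly (RCC) and (AP) for $\cC$. The implication (a)$\Rightarrow$(c) is immediate from Proposition~\ref{A:coslice}: Fraïssé-ness is inherited by coslices, which yields universal (homogeneous, hence a fortiori universal) ind-objects in each $\CS_X(\cC)$.

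The substantive direction is (b)$\Rightarrow$(a). I will construct an ind-object $\Omega=(\Omega_n)$ satisfying the local extension property
\begin{displaymath}
(\star)\quad \text{for every }\alpha\colon \Omega_a\to Y\text{ in }\cC,\ \text{some }\beta\colon Y\to \Omega_m\text{ with }m\ge a\text{ satisfies }\beta\circ\alpha=\epsilon_{a,m},
\end{displaymath}
and, in parallel, force $\Omega$ to be universal by embedding a cofinal sequence $\{X_n\}$ of $\cC$. Granting $(\star)$ and universality, f-injectivity follows by a short diagram chase: if $\alpha\colon X\to Y$ in $\cC$ and $\gamma\colon X\to \Omega$ factors through $\gamma_m\colon X\to \Omega_m$, then (AP) amalgamates $\gamma_m,\alpha$ into some $Z$ with maps $u\colon \Omega_m\to Z$ and $v\colon Y\to Z$; applying $(\star)$ to $u$ produces $w\colon Z\to \Omega_{m'}$ with $wu=\epsilon_{m,m'}$, and the composite $wv\colon Y\to \Omega_{m'}\subset \Omega$ extends $\gamma$ along $\alpha$. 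Proposition~\ref{prop:finj}(a) then converts universal + f-injective into universal + homogeneous.

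For the construction, I fix a cofinal sequence $\{X_n\}$ of $\cC$ using (CC) and, for each object $A$, a countable cofinal family $\{\alpha^A_k\colon A\to Y^A_k\}_{k\ge 1}$ using (RCC). I then enumerate pairs $(a_n,b_n)_{n\ge 2}$ in $\bZ_+^2$ with $a_n<n$ so that every pair eventually appears. Set $\Omega_1=X_1$. At stage $n\ge 2$, apply (AP) to the transition $\epsilon_{a_n,n-1}\colon \Omega_{a_n}\to \Omega_{n-1}$ and $\alpha^{\Omega_{a_n}}_{b_n}\colon \Omega_{a_n}\to Y^{\Omega_{a_n}}_{b_n}$ to obtain an amalgam $Z_n$, then apply (JEP) to $Z_n$ and $X_n$ to produce $\Omega_n$. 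This forces every $\alpha^{\Omega_a}_k$ to be extended through some $\Omega_n$ compatibly with the transitions; by cofinality of $\{\alpha^A_k\}_k$ any embedding $\Omega_a\to Y$ factors through some $\alpha^{\Omega_a}_k$, yielding $(\star)$. Universality of $\Omega$ is immediate because each $X_n$ embeds into $\Omega_n$ and every object of $\cC$ embeds into some $X_n$ by (CC).

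The main obstacle is the bookkeeping. The extension problems needed for f-injectivity are indexed a priori by the (possibly uncountable) hom-sets of $\cC$, but the crucial use of (RCC) is precisely to reduce the tasks for each $\Omega_a$ to a countable cofinal subfamily $\{\alpha^{\Omega_a}_k\}_k$. Since only countably many objects $\Omega_a$ arise during the construction, the total task set is countable and can be scheduled by any standard surjection $\bZ_+\to \bZ_+^2$ subject to the causal constraint $a_n<n$. The remaining verifications are routine diagram chases through the amalgamation and joint-embedding squares.
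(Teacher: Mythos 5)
Your proposal is correct and follows essentially the same route as the paper: (b)$\Leftrightarrow$(c) via Proposition~\ref{A:univ}, (a)$\Rightarrow$(c) via Proposition~\ref{A:coslice}, and for (b)$\Rightarrow$(a) an iterated amalgamation of the (RCC)-cofinal families of morphisms out of the $\Omega_a$'s, with universality forced by (CC)+(JEP) and f-injectivity upgraded to homogeneity by Proposition~\ref{prop:finj}(a). The only difference is bookkeeping: you schedule one extension task per stage via an enumeration of pairs, whereas the paper's ``special ind-object'' handles all tasks with $n+m=r$ at stage $r$ using the fan-amalgamation Lemma~\ref{lem:fraisse3}; both are sound.
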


Proposition~\ref{A:univ} shows that (b) and (c) are equivalent. If $\cC$ is Fra\"iss\'e then it obviously has a universal ind-object; since the coslice categories of $\cC$ are also Fra\"iss\'e (Proposition~\ref{A:coslice}), they too have universal ind-objects, and so (c) holds. In the remainder of \S \ref{ss:fraisse}, we assume that (b) holds and show that $\cC$ is Fra\"iss\'e.

\begin{lemma} \label{lem:fraisse3}
Suppose we have a diagram
\begin{displaymath}
\xymatrix@C=4em{
Y_1 & Y_2 &  & Y_{n-1} & Y_n \\
X_1 \ar[u]^{\beta_1} \ar[r]^{\alpha_1} & X_2 \ar[u]^{\beta_2} \ar[r]^{\alpha_2} & \cdots \ar[r]^{\alpha_{n-2}} & X_{n-1} \ar[u]^{\beta_{n-1}} \ar[r]^{\alpha_{n-1}} & X_n \ar[u]^{\beta_n} }
\end{displaymath}
in $\cC$. Then there exists an object $X_{n+1}$ and morphisms $\alpha_n \colon X_n \to X_{n+1}$ and $\gamma_i \colon Y_i \to X_{n+1}$ for $1 \le i \le n$ such that the diagram commutes, that is, we have
\begin{displaymath}
\gamma_i \circ \beta_i = \alpha_n \circ \cdots \circ \alpha_{i+1} \circ \alpha_i
\end{displaymath}
for all $1 \le i \le n$.
\end{lemma}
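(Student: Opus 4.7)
The plan is to induct on $n$ and invoke the amalgamation property (AP) twice at each inductive step. For the base case $n=1$, there are no horizontal arrows in the diagram, and I simply take $X_2 = Y_1$, $\alpha_1 = \beta_1$, and $\gamma_1 = \id_{Y_1}$; then $\gamma_1 \circ \beta_1 = \beta_1 = \alpha_1$, as required.

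For the inductive step, I first apply the inductive hypothesis to the truncation of the diagram obtained by dropping the last column (i.e., $X_n$, $Y_n$, $\alpha_{n-1}$, and $\beta_n$). This produces an object $\tilde X_n$ together with a morphism $\tilde\alpha_{n-1} \colon X_{n-1} \to \tilde X_n$ and morphisms $\tilde\gamma_i \colon Y_i \to \tilde X_n$ for $1 \le i \le n-1$ satisfying
\[
\tilde\gamma_i \circ \beta_i \;=\; \tilde\alpha_{n-1} \circ \alpha_{n-2} \circ \cdots \circ \alpha_i.
\]
The two morphisms $\alpha_{n-1} \colon X_{n-1} \to X_n$ and $\tilde\alpha_{n-1} \colon X_{n-1} \to \tilde X_n$ form a span out of $X_{n-1}$, which (AP) completes to a commuting square with apex $Z$ and legs $u \colon X_n \to Z$, $v \colon \tilde X_n \to Z$ satisfying $u \circ \alpha_{n-1} = v \circ \tilde\alpha_{n-1}$. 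A second application of (AP) to the span $Z \xleftarrow{u} X_n \xrightarrow{\beta_n} Y_n$ then yields an object $X_{n+1}$ together with morphisms $w \colon Z \to X_{n+1}$ and $\gamma_n \colon Y_n \to X_{n+1}$ satisfying $w \circ u = \gamma_n \circ \beta_n$. I set $\alpha_n := w \circ u$ and $\gamma_i := w \circ v \circ \tilde\gamma_i$ for $1 \le i \le n-1$. The identity for $i = n$ is immediate from $w \circ u = \gamma_n \circ \beta_n = \alpha_n$; for $i < n$ the inductive compatibility together with $v \circ \tilde\alpha_{n-1} = u \circ \alpha_{n-1}$ gives
\[
\gamma_i \circ \beta_i \;=\; w v \tilde\gamma_i \beta_i \;=\; w v \tilde\alpha_{n-1} \alpha_{n-2} \cdots \alpha_i \;=\; w u \alpha_{n-1} \cdots \alpha_i \;=\; \alpha_n \circ \alpha_{n-1} \circ \cdots \circ \alpha_i.
\]

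Only (AP) is used in this argument; none of (CC), (RCC), or (JEP) is needed. Since all morphisms in $\cC$ are monic by standing hypothesis, the maps produced by (AP) are automatically embeddings, so there is nothing additional to verify on that front. The main obstacle is purely bookkeeping: one must carefully track the compositions through the two successive amalgamations and confirm that the inductive compatibility from $\tilde X_n$ survives both. There is no deep conceptual difficulty.
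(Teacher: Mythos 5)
Your proof is correct and follows essentially the same route as the paper: induction on $n$ with the identical base case, using the amalgamation property at each inductive step. The only difference is that the paper uses a single application of (AP) per step, amalgamating the inductively-produced object with $Y_n$ over $X_{n-1}$ along the composite $\beta_n \circ \alpha_{n-1}$, whereas you use two applications via the intermediate object $Z$; both give the same conclusion.
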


\begin{proof}
We proceed by induction on $n$. The statement is clear for $n=1$: we can take $X_2=Y_1$, with $\alpha_1=\beta_1$ and $\gamma_1=\id$. Suppose now the statement is true for $n-1$, and let us prove it for $n$. By the inductive hypothesis, we can find an object $X'_n$, an embedding $\alpha'_{n-1} \colon X_{n-1} \to X'_n$, and embeddings $\gamma'_i \colon Y_i \to X'_n$ for $1 \le i \le n-1$ such that the relevant diagram commutes. Consider the diagram
\begin{displaymath}
\xymatrix@C=4em{
X_{n-1} \ar[r]^{\alpha_{n-1}} \ar[d]_{\alpha'_{n-1}} & X_n \ar[r]^{\beta_n} & Y_n \ar@{..>}[d]^{\gamma_n} \\
X'_n \ar@{..>}[rr]^{\delta} && X_{n+1} }
\end{displaymath}
By (AP), we can find an object $X_{n+1}$ and morphisms $\delta$ and $\gamma_n$ making the diagram commute. We define $\alpha_n \colon X_n \to X_{n+1}$ to be the composition $\gamma_n \circ \beta_n$, and for $1 \le i \le n-1$ we define $\gamma_i \colon Y_i \to X_{n+1}$ to be the composition $\delta \circ \gamma'_i$. It is clear that the necessary conditions hold.
\end{proof}

Fix a cofinal sequence of objects $\{A(m)\}_{m \ge 1}$ as in (CC). For each object $X$, choose a cofinal sequence $\{\lambda_{X,m} \colon X \to X(m)\}_{m \ge 1}$ of morphisms out of $X$, as in (RCC). We assume that $A(m)$ embeds into $X(m)$ for each $m$; we can arrange this since (JEP) holds. A \defn{special ind-object} is an an ind-object $\Omega$ equipped with maps $\kappa_{n,m} \colon \Omega_n(m) \to \Omega_{n+m}$ for all $n,m \ge 1$ such that the diagram
\begin{displaymath}
\xymatrix{
\Omega_n \ar[rr]^{\epsilon_{n,n+m}} \ar[rd]_{\lambda_m} && \Omega_{n+m} \\
& \Omega_n(m) \ar[ru]_{\kappa_{n,m}} }
\end{displaymath}
commutes for all $n$ and $m$.

\begin{lemma} \label{lem:fraisse4}
A special ind-object exists.
\end{lemma}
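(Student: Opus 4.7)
The plan is to build $\Omega$ by induction on $N$, producing at the $N$th stage the object $\Omega_N$, the transition map $\epsilon_{N-1,N}\colon \Omega_{N-1}\to\Omega_N$, and the full batch of maps $\kappa_{i,N-i}\colon \Omega_i(N-i)\to\Omega_N$ for $1\le i\le N-1$ all at once, so that the required identity $\kappa_{i,N-i}\circ\lambda_{\Omega_i,N-i}=\epsilon_{i,N}$ holds with respect to the transition maps already in hand. For the base case, I would take $\Omega_1$ to be an arbitrary object (say $A(1)$, though the choice is immaterial).

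For the inductive step, suppose $\Omega_1,\ldots,\Omega_{N-1}$ together with all transition maps $\epsilon_{i,j}$ for $i\le j\le N-1$ and all compatibility-respecting maps $\kappa_{i,m}$ with $i+m\le N-1$ have been constructed. I would then invoke Lemma~\ref{lem:fraisse3} with $n=N-1$, setting $X_i=\Omega_i$ and $\alpha_i=\epsilon_{i,i+1}$ for the bottom row, and $Y_i=\Omega_i(N-i)$ with $\beta_i=\lambda_{\Omega_i,N-i}$ for the vertical arrows. The lemma delivers a new object, which I rename $\Omega_N$, a morphism from $\Omega_{N-1}$, which I rename $\epsilon_{N-1,N}$, and morphisms $\gamma_i\colon \Omega_i(N-i)\to\Omega_N$, which I rename $\kappa_{i,N-i}$. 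The commutativity assertion $\gamma_i\circ\beta_i=\alpha_{N-1}\circ\cdots\circ\alpha_i$ unwinds to exactly $\kappa_{i,N-i}\circ\lambda_{\Omega_i,N-i}=\epsilon_{i,N}$, which is the required compatibility. Iterating yields the sequence $\Omega=(\Omega_n)_{n\ge 1}$ together with all data making it a special ind-object.

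I expect no genuine obstacle: the real work, namely the iterated amalgamation, has already been packaged into Lemma~\ref{lem:fraisse3}, and the argument here is essentially index-bookkeeping. The one point that requires a little care is that at stage $N$ one must amalgamate the entire chain $\Omega_1\to\cdots\to\Omega_{N-1}$ with \emph{all} $N-1$ auxiliary objects $\Omega_i(N-i)$ simultaneously, rather than adjoining them one at a time, because the compatibility of each new $\kappa_{i,N-i}$ with all the previously constructed transition maps is part of the structure; handling them in a single application of Lemma~\ref{lem:fraisse3} is precisely what ensures this. No appeal to (RCC) or (JEP) is needed in the inductive step beyond what was already used to set up the fixed families $\{A(m)\}$ and $\{\lambda_{X,m}\}$, so the construction goes through using only (AP).
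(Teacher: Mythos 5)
Your proposal is correct and is essentially identical to the paper's proof: both construct $\Omega$ inductively, applying Lemma~\ref{lem:fraisse3} at stage $N$ to the chain $\Omega_1\to\cdots\to\Omega_{N-1}$ with the vertical maps $\lambda_{N-i}\colon\Omega_i\to\Omega_i(N-i)$ to produce $\Omega_N$, the new transition map, and the entire batch of $\kappa_{i,N-i}$ at once. Your observation that all auxiliary objects must be amalgamated simultaneously via a single application of Lemma~\ref{lem:fraisse3} is exactly the point of the paper's argument.
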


\begin{proof}
We inductively construct $\Omega$. To start, we take $\Omega_1$ to be any object. Suppose now we have constructed $\Omega_1 \to \Omega_2 \to \cdots \to \Omega_{r-1}$ and the $\kappa_{n,m}$ for $n+m<r$. Consider the diagram
\begin{displaymath}
\xymatrix@C=4em{
\Omega_1(r-1) & \Omega_2(r-2) &  & \Omega_{r-2}(2) & \Omega_{r-1}(1) \\
\Omega_1 \ar[u]^{\lambda_{r-1}} \ar[r]^{\epsilon_1} & \Omega_2 \ar[u]^{\lambda_{r-2}} \ar[r]^{\epsilon_2} & \cdots \ar[r]^{\epsilon_{r-3}} & \Omega_{r-2} \ar[u]^{\lambda_2} \ar[r]^{\epsilon_{r-2}} & \Omega_{r-1} \ar[u]^{\lambda_1} }
\end{displaymath}
Here $\epsilon_i=\epsilon_{i,i+1}$. By Lemma~\ref{lem:fraisse3}, we can find an object $\Omega_r$ and morphisms $\epsilon_{r-1} \colon \Omega_{r-1} \to \Omega_r$ and $\kappa_{r,i} \colon \Omega_i(r-i) \to \Omega_r$ for $0 \le i \le r-1$ such that the diagram commutes. This constructs $\Omega$ up to level $r$, which completes the proof.
\end{proof}

\begin{lemma} \label{lem:fraisse5}
A special ind-object is universal and homogeneous.
\end{lemma}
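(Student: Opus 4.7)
The plan is to verify the two properties in order, using the structure of a special ind-object $\Omega$ together with the cofinality hypotheses and (AP).

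For universality, I would argue as follows. Given any object $Y$ of $\cC$, cofinality of the sequence $\{A(m)\}$ gives an embedding $Y \to A(m)$ for some $m$. By our standing assumption that $A(m)$ embeds into $X(m)$ for every $X$, we have in particular an embedding $A(m) \to \Omega_1(m)$. Composing with $\kappa_{1,m} \colon \Omega_1(m) \to \Omega_{1+m}$ and with the inclusion $\Omega_{1+m} \to \Omega$ yields an embedding $Y \to \Omega$. (We do not need the compatibility $\kappa \circ \lambda = \epsilon$ for this part; it is really only a cofinality statement.)

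For homogeneity, by Proposition~\ref{prop:finj}(a) it suffices to prove that $\Omega$ is f-injective. So suppose we are given an embedding $\alpha \colon X \to Y$ in $\cC$ and an embedding $\gamma \colon X \to \Omega$; the latter factors through some $\Omega_n$, say as $\gamma' \colon X \to \Omega_n$. Applying (AP) to the pair $\alpha \colon X \to Y$ and $\gamma' \colon X \to \Omega_n$, I obtain an object $Z$ together with morphisms $\delta \colon Y \to Z$ and $\eta \colon \Omega_n \to Z$ such that $\eta \gamma' = \delta \alpha$. Now $\eta$ is a morphism out of $\Omega_n$, so by cofinality of $\{\lambda_{\Omega_n,m}\}_{m\ge 1}$ there exist $m$ and a morphism $\rho \colon Z \to \Omega_n(m)$ with $\rho \eta = \lambda_{\Omega_n,m}$. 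Then $\beta := (\Omega_{n+m} \hookrightarrow \Omega) \circ \kappa_{n,m} \circ \rho \circ \delta \colon Y \to \Omega$ is the desired extension: restricting along $\alpha$ and using $\eta \gamma' = \delta \alpha$ together with the defining relation $\kappa_{n,m} \circ \lambda_{\Omega_n,m} = \epsilon_{n,n+m}$, the composite $\beta \alpha$ collapses to $\gamma' \colon X \to \Omega_n \to \Omega$, which is $\gamma$.

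The conceptual bookkeeping in the second paragraph is the main point to get right: the amalgamation property supplies the object $Z$ in which $Y$ and $\Omega_n$ meet over $X$, the cofinality property (RCC) then routes the morphism $\Omega_n \to Z$ through the canonical $\lambda_{\Omega_n,m}$, and the defining structure map $\kappa_{n,m}$ of the special ind-object is precisely what converts this factorization back into a map landing in $\Omega_{n+m} \subset \Omega$. The only real obstacle is to pick the indices carefully so that the commutativity chase $\beta \alpha = \gamma$ goes through; everything else is formal.
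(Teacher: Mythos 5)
Your proof is correct and follows essentially the same route as the paper's: universality via the chain $Y \to A(m) \to \Omega_1(m) \to \Omega_{1+m}$, and f-injectivity by amalgamating $Y$ and $\Omega_n$ over $X$, routing the resulting map $\Omega_n \to Z$ through $\lambda_{\Omega_n,m}$ via (RCC), and composing with $\kappa_{n,m}$, after which homogeneity follows from Proposition~\ref{prop:finj}(a). The commutativity chase $\beta\alpha=\gamma$ is exactly the one the paper performs.
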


\begin{proof}
Let $\Omega$ be special. We have embeddings $A(m) \to \Omega_1(m) \to \Omega$ for all $m \ge 1$. Since every object embeds into some $A(m)$, it follows that $\Omega$ is universal.

We now show that $\Omega$ is f-injective. Thus let $\alpha \colon X \to Y$ and $\gamma \colon X \to \Omega$ be given. Let $n$ be such that $X$ maps into $\Omega_n$. Consider a commutative diagram
\begin{displaymath}
\xymatrix{
\Omega_n \ar[r] & Z \\
X \ar[u]^{\gamma} \ar[r]^{\alpha} & Y \ar[u] }
\end{displaymath}
which exists by (AP). Now choose a map $Z \to \Omega_n(m)$ for some $m$ such that $\Omega_n \to Z \to \Omega_n(m)$ is $\lambda_m$; this exists by the definition of the $\lambda$'s. We thus have a commutative diagram
\begin{displaymath}
\xymatrix@C=3em{
\Omega_n \ar[r] \ar@/^1.5pc/[rr]^{\lambda_m} & Z \ar[r] & \Omega_n(m) \ar[r]^{\kappa_{n,m}} & \Omega_{n+m} \\
X \ar[u]^{\gamma} \ar[r]^{\alpha} & Y \ar[u] \ar@{..>}[rru]_{\beta} }
\end{displaymath}
where $\beta$ is defined to be the composition. Since the composition of the top line is the transition map $\epsilon_{n,n+m}$, it follows that $\gamma=\beta \circ \alpha$ as embeddings $X \to \Omega$. This completes the proof.
\end{proof}

Lemmas~\ref{lem:fraisse4} and~\ref{lem:fraisse5} show that a universal homogeneous object exists, which completes the proof of Theorem~\ref{A:fraisse}.

%\begin{corollary}
%The category $\cC$ is Fra\"iss\'e if and only if for every object $X$ of $\cC$, the coslice category ${}_{X \backslash} \cC$ has a universal ind-object.
%\end{corollary}
%
%\begin{proof}
%We have the following logical equivalences:
%\begin{align*}
%& \textrm{$\cC$ is Fra\"iss\'e} \\
%\iff & \textrm{$\cC$ satisfies (F1) and (F2)} \\
%\iff & \textrm{${}_{X \backslash} \cC$ satisfies (U1) and (U2) for all $X$} \\
%\iff & \textrm{${}_{X \backslash} \cC$ has a universal ind-object for all $X$}
%\end{align*}
%The first equivalence is Theorem~\ref{A:fraisse}, the second is Proposition~\ref{prop:FU}, and the third is Proposition~\ref{A:univ}.
%\end{proof}
%
%\begin{proposition}
%If $\cC$ is a Fra\"iss\'e category so is any coclice category ${}_{X \backslash} \cC$.
%\end{proposition}
%
%\begin{proof}
%\end{proof}

\subsection{Functors}

Suppose now that $\cD$ is a second category in which all morphisms are monic and $\Phi \colon \cC \to \cD$ is a functor. We now examine how $\Phi$ interacts with the classes of ind-objects considered above. We begin with universal objects. For this, we introduce the following property:
\begin{itemize}[leftmargin=1.5cm]
\item[(EP)] \defn{Embedding property}: for every object $Y$ of $\cD$ there is an object $Z$ of $\cC$ and an embedding $Y \to \Phi(Z)$.
\end{itemize}
If $\Phi$ is essentially surjective then (EP) holds, as one can then find an isomorphism $Y \to \Phi(Z)$.

\begin{proposition} \label{A:Phi-univ}
Suppose that $\cC$ has a universal ind-object. Then the following are equivalent:
\begin{enumerate}
\item $\Phi$ satisfies (EP).
\item $\Phi$ maps some universal ind-object of $\cC$ to a universal ind-object of $\cD$.
\item $\Phi$ maps every universal ind-object of $\cC$ to a universal ind-object of $\cD$.
\end{enumerate}
\end{proposition}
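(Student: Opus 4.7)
The plan is to prove the equivalence cyclically: (c) $\Rightarrow$ (b) $\Rightarrow$ (a) $\Rightarrow$ (c). The implication (c) $\Rightarrow$ (b) is immediate, since $\cC$ is assumed to have a universal ind-object, so ``every'' implies ``some.''

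For (b) $\Rightarrow$ (a), I would argue as follows. Suppose $\Omega$ is a universal ind-object of $\cC$ whose image $\Phi(\Omega)$ is a universal ind-object of $\cD$. Given any object $Y$ of $\cD$, by universality of $\Phi(\Omega)$ there is an embedding $Y \to \Phi(\Omega)$, and this must factor through $\Phi(\Omega_n)$ for some $n$ (since $Y$ is an object of $\cD$, not merely an ind-object). Taking $Z = \Omega_n$ then gives an embedding $Y \to \Phi(Z)$, verifying (EP).

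For (a) $\Rightarrow$ (c), suppose $\Phi$ satisfies (EP) and let $\Omega$ be any universal ind-object of $\cC$. Given an object $Y$ of $\cD$, (EP) yields some object $Z$ of $\cC$ and an embedding $Y \to \Phi(Z)$. Since $\Omega$ is universal in $\cC$, we have an embedding $Z \to \Omega$, which factors through some $\Omega_n$. Applying $\Phi$ (which preserves embeddings because all morphisms are monic in both categories) and composing, we obtain an embedding $Y \to \Phi(Z) \to \Phi(\Omega_n) \to \Phi(\Omega)$. Hence every object of $\cD$ embeds into $\Phi(\Omega)$, so $\Phi(\Omega)$ is universal.

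None of the steps here are subtle; the main thing to get right is simply to track that embeddings of objects into ind-objects always factor through some finite stage (which is the defining feature of the index category $\bZ_+$ being filtered with countable cofinality). No obstacle is anticipated.
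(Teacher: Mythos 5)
Your proposal is correct and follows essentially the same argument as the paper: (a)$\Rightarrow$(c) by composing the (EP)-embedding $Y \to \Phi(Z)$ with $\Phi$ applied to an embedding $Z \to \Omega$, and (b)$\Rightarrow$(a) by factoring an embedding $Y \to \Phi(\Omega)$ through some finite stage $\Phi(\Omega_n)$. No substantive differences.
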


\begin{proof}
(a) $\implies$ (c). Let $\Omega$ be a universal ind-object of $\cC$. We must show that $\Phi(\Omega)$ is universal. Let $Y$ be an object of $\cD$. By (EP), there is an embedding $Y \to \Phi(Z)$ for some object $Z$ of $\cC$. Since $\Omega$ is universal, there is an embedding $Z \to \Omega$, which yields an embedding $\Phi(Z) \to \Phi(\Omega)$. We thus obtain an embedding $Y \to \Phi(\Omega)$, and so $\Phi(\Omega)$ is universal.

(c) $\implies$ (b) is trivial.

(b) $\implies$ (a). Let $\Omega$ be a universal ind-object of $\cC$ such that $\Phi(\Omega)$ is universal. Let $Y$ be an object of $\cD$. Then there is an embedding $Y \to \Phi(\Omega)$. This comes from an embedding $Y \to \Phi(\Omega_n)$ for some $n$, and so (EP) holds.
\end{proof}

We now examine how $\Phi$ interacts with homogeneous objects. For this, we introduce a variant of the (EP) property:
\begin{itemize}[leftmargin=1.5cm]
\item[(REP)] \defn{Relative embedding property}: given a morphism $\alpha \colon \Phi(X) \to Y$ in $\cD$, there exists a commutative diagram
\begin{displaymath}
\xymatrix{
\Phi(X) \ar[rd]_{\alpha} \ar[rr]^{\Phi(\gamma)} && \Phi(Z) \\
& Y \ar[ru]_{\beta} }
\end{displaymath}
where $\gamma \colon X \to Z$ is a morphism in $\cC$.
\end{itemize}
Note that (REP) holds if and only if for every object $X$ of $\cC$ the natural functor $\Phi \colon \CS_X(\cC) \to \CS_{\Phi(X)}(\cD)$ satisfies (EP).

\begin{proposition} \label{A:Phi-homo}
Suppose that $\cC$ and $\cD$ are Fra\"iss\'e categories. Then the following are equivalent:
\begin{enumerate}
\item $\Phi$ maps any universal homogeneous ind-object of $\cC$ to a universal homogeneous ind-object of $\cD$.
\item $\Phi$ satisfies (EP) and (REP)
\item $\Phi$ and the induced functors $\CS_X(\cC) \to \CS_{\Phi(X)}(\cD)$ (for any object $X$ of $\cC$) map universal ind-objects to universal ind-objects.
\end{enumerate}
\end{proposition}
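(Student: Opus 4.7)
The strategy is to first establish the formal equivalence (b)~$\iff$~(c), and then prove (a)~$\iff$~(b) through explicit extension arguments.

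The equivalence (b)~$\iff$~(c) is a direct consequence of Proposition~\ref{A:Phi-univ}. For the first halves: since $\cC$ is Fra\"iss\'e, it has a universal ind-object, so Proposition~\ref{A:Phi-univ} applied to $\Phi$ gives that (EP) is equivalent to $\Phi$ mapping (some, equivalently every) universal ind-object of $\cC$ to a universal ind-object of $\cD$. For the second halves: condition (REP) is, by definition, condition (EP) for each induced functor $\CS_X(\cC) \to \CS_{\Phi(X)}(\cD)$; by Proposition~\ref{A:coslice} these coslice categories are Fra\"iss\'e (so possess universal ind-objects), and Proposition~\ref{A:Phi-univ} applied to each coslice functor then translates (REP) into the preservation of universal ind-objects on coslices.

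For (a)~$\implies$~(b), I would fix a universal homogeneous ind-object $\Omega$ in $\cC$, so by hypothesis $\Phi(\Omega)$ is universal homogeneous in $\cD$. Universality of $\Phi(\Omega)$ immediately yields (EP). For (REP), given an embedding $\alpha \colon \Phi(X) \to Y$, use universality of $\Omega$ to pick an embedding $\iota \colon X \to \Omega_n$, giving an embedding $\Phi(\iota) \colon \Phi(X) \to \Phi(\Omega)$. By Proposition~\ref{prop:finj}(a), $\Phi(\Omega)$ is f-injective; applying this to $\alpha$ and $\Phi(\iota)$ produces $\beta \colon Y \to \Phi(\Omega)$ with $\beta \circ \alpha = \Phi(\iota)$. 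After enlarging $n$, the map $\beta$ factors through $\Phi(\Omega_n)$. Since all morphisms in $\cD$ are monic, the two resulting maps $\Phi(X) \to \Phi(\Omega_n)$ (namely $\beta \circ \alpha$, and $\Phi$ applied to $X \to \Omega_n$) must agree, verifying (REP) with $Z = \Omega_n$ and $\gamma = \iota$.

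For (b)~$\implies$~(a), again fix $\Omega$ universal homogeneous in $\cC$; by (EP) and Proposition~\ref{A:Phi-univ}, $\Phi(\Omega)$ is universal in $\cD$. To show $\Phi(\Omega)$ is also f-injective (and hence homogeneous by Proposition~\ref{prop:finj}(a)), suppose $\alpha \colon Y_1 \to Y_2$ is an embedding in $\cD$ and $\gamma \colon Y_1 \to \Phi(\Omega_n)$ is given. Since $\cD$ is Fra\"iss\'e, it satisfies (AP), so I amalgamate $\alpha$ and $\gamma$ to obtain an object $W$ with embeddings $Y_2 \to W$ and $\Phi(\Omega_n) \to W$ making the square commute. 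Now apply (REP) to the map $\Phi(\Omega_n) \to W$ to obtain an object $Z$ of $\cC$, a morphism $\xi \colon \Omega_n \to Z$, and an embedding $W \to \Phi(Z)$ such that $\Phi(\Omega_n) \to W \to \Phi(Z)$ equals $\Phi(\xi)$. Finally, f-injectivity of $\Omega$ in $\cC$ extends the canonical embedding $\Omega_n \to \Omega$ along $\xi$, giving $\eta \colon Z \to \Omega$; then the composite $Y_2 \to W \to \Phi(Z) \xrightarrow{\Phi(\eta)} \Phi(\Omega)$ is the required extension $\beta$. The main obstacle is the diagram chase at the end: one must carefully combine the commutativity provided by amalgamation, (REP), and the f-injectivity of $\Omega$ to verify that $\beta \circ \alpha$ recovers the original embedding $Y_1 \to \Phi(\Omega)$.
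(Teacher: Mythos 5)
Your proposal is correct, and it agrees with the paper on the equivalence (b)~$\iff$~(c), which both arguments obtain from Proposition~\ref{A:Phi-univ} together with the observation that (REP) is (EP) for the coslice functors. Where you diverge is in how the remaining implications are handled. For (a)~$\implies$~(b)/(c), the paper passes through coslice categories: it notes that $(\Omega,\alpha)$ is universal homogeneous in $\CS_X(\cC)$ by (the proof of) Proposition~\ref{A:coslice}, and likewise for its image, then invokes Proposition~\ref{A:Phi-univ}; your direct extraction of (REP) from the f-injectivity of $\Phi(\Omega)$ is the same content unpacked, and your appeal to monicity to pin down equality at a finite level of the ind-system is exactly the point one must check. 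For (b)~$\implies$~(a), the two arguments genuinely differ: the paper fixes a universal homogeneous ind-object $\Omega'$ of $\cD$ and runs a back-and-forth zigzag, alternately using (REP) with the f-injectivity of $\Omega$ and the f-injectivity of $\Omega'$, to build an explicit isomorphism $\Phi(\Omega) \to \Omega'$; you instead verify directly that $\Phi(\Omega)$ is universal (via (EP) and Proposition~\ref{A:Phi-univ}) and f-injective (via (AP) in $\cD$, then (REP), then f-injectivity of $\Omega$), and conclude homogeneity from Proposition~\ref{prop:finj}(a). Your route trades the paper's use of a pre-existing universal homogeneous object of $\cD$ for a use of the amalgamation property in $\cD$ --- both are available since $\cD$ is Fra\"iss\'e (Theorem~\ref{A:fraisse}) --- and your closing diagram chase does go through: the amalgamation square converts $\beta\circ\alpha$ into $\Phi(\eta)\circ\Phi(\xi)\circ\gamma=\Phi(\eta\circ\xi)\circ\gamma$, which is $\gamma$ followed by a transition map of $\Phi(\Omega)$, as required. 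The paper's version has the mild advantage of exhibiting the isomorphism $\Phi(\Omega)\cong\Omega'$ explicitly; yours is arguably more economical in that it never leaves $\Phi(\Omega)$.
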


We note that because all universal homogeneous ind-objects are isomorphic, condition (a) will be satisfied if $\Phi$ maps any particular universal homogeneous ind-object of $\cC$ to a universal homogeneous ind-object of $\cD$.

\begin{proof}
Proposition~\ref{A:Phi-univ} shows that (b) and (c) are equivalent.

Suppose that (a) holds. Let $\Omega$ be a universal homogeneous ind-object of $\cC$, so that $\Phi(\Omega)$ is a universal homogeneous ind-object of $\cD$. Given an embedding $\alpha \colon X \to \Omega$, (the proof of) Proposition~ \ref{A:coslice} shows that $(\Omega, \alpha)$ is a universal homogeneous ind-object of $\Delta_X(\cC)$. By the same reasoning, $(\Phi(\Omega), \Phi(\alpha))=\Phi(\Omega, \alpha)$ is a universal homogeneous object of $\Delta_{\Phi(X)}(\cD)$. Thus (c) holds.

%Since $\Phi(\Omega)$ is universal, (EP) holds by Proposition~\ref{A:Phi-univ}. We now prove (REP). Thus let $\alpha \colon \Phi(X) \to Y$ be a morphism in $\cD$. Chose an embedding $\gamma \colon X \to \Omega$ in $\cC$. Since $\Phi(\Omega)$ is f-injective, we can find $\beta$ making the following diagram commute
%\begin{displaymath}
%\xymatrix{
%\Phi(X) \ar[rd]_{\alpha} \ar[rr]^{\Phi(\gamma)} && \Phi(\Omega) \\
%& Y \ar[ru]_{\beta} }
%\end{displaymath}
%Letting $n$ be such that $\gamma$ and $\beta$ map into $\Omega_n$, we have verified (REP) (take $Z=\Omega_n$).

Suppose (b) holds, let $\Omega$ be a universal homogeneous ind-object of $\cC$, and let $\Omega'$ be a universal homogeneous object of $\cD$. Put $n(1)=1$ and choose an embedding $\alpha_1 \colon \Phi(\Omega_{n(1)}) \to \Omega'_{m(1)}$ for some $m(1)$, which is possible since $\Omega'$ is universal. By (REP), $\alpha_1$ factors into a morphism $\Phi(\Omega_{n(1)} \to Z)$ for some $Z$ in $\cC$. Since $\Omega$ is f-injective, the morphism $\Omega_{n(1)} \to Z$ factors into one of the transition maps $\epsilon_{n(1),n(2)} \colon \Omega_{n(1)} \to \Omega_{n(2)}$ for some $n(2)$. The upshot is that we can find a commutative diagram
\begin{displaymath}
\xymatrix{
\Phi(\Omega_{n(1)}) \ar[rd]_{\alpha_1} \ar[rr]^{\Phi(\epsilon_{n(1),n(2)})} && \Phi(\Omega_{n(2)}) \\
& \Omega'_{m(1)} \ar[ru]_{\beta_1} }
\end{displaymath}
for some $\beta_1$. Now, since $\Omega'$ is f-injective, we can extend the embedding $\Omega'_{m(1)} \to \Omega'$ along $\beta_1$. We can thus extend the above diagram to a commutative diagram
\begin{displaymath}
\xymatrix{
\Phi(\Omega_{n(1)}) \ar[rd]_{\alpha_1} \ar[rr]^{\Phi(\epsilon_{n(1),n(2)})} && \Phi(\Omega_{n(2)}) \ar[rd]^{\alpha_2} \\
& \Omega'_{m(1)} \ar[ru]_{\beta_1} \ar[rr]_{\epsilon'_{m(1),m(2)}} && \Omega'_{m(2)}}
\end{displaymath}
for some $m(2)$. Continuing in this manner, we construct an isomorphism $\alpha \colon \Phi(\Omega) \to \Omega'$. Thus $\Phi(\Omega)$ is a universal homogeneous ind-object of $\cD$, and so (a) holds.
\end{proof}

%\section{Warm-up}
%
%\section{Algebraic Fra\"iss\'e theory}
%
%\subsection{Structures}
%
%A \defn{signature} is a sequence $\Sigma=(\Sigma_n)_{n \ge 0}$ where each $\Sigma_n$ is a (possibly infinite) disjoint union of algebraic varieties. Fix a signature $\Sigma$. A \emph{structure} for $\Sigma$ is a variety $X$ together with for each $n \ge 0$ a closed subvariety $R_n$ of $X^n \times \Sigma_n$. Let $R_n(t) \subset X^n$ be the fiber of $R^n$ over $t \in \Sigma_n$. We regard $R_n(t)$ as an $n$-ary relation on $X$. Thus, as $t$ varies, we have an algebraically varying family of relations on $X$. Let $X$ and $Y$ be two structures for the same signature. An \defn{embedding} $X \to Y$ is a closed immersion of varieties that respects the relations $R_n(t)$ for all $n$ and $t$.

\end{document}